\documentclass[11pt, oneside]{article}   	
\usepackage{geometry}                		
\geometry{letterpaper}                   		
\usepackage{graphicx}				
								
\usepackage{epstopdf}
\usepackage{placeins}
\DeclareGraphicsExtensions{.eps}	
\usepackage{amssymb,amsmath,amsthm}
\usepackage{algorithmic}
\usepackage{caption}
\usepackage{subcaption}
\usepackage{color}
\usepackage{centernot}
\usepackage{bbm}

\newtheorem{theorem}{Theorem}
\newtheorem{lemma}[theorem]{Lemma}

\newtheorem{corollary}[theorem]{Corollary}
\newtheorem{proposition}[theorem]{Proposition}

\newtheorem{remark}[theorem]{Remark}

\newcommand{\var}{\operatorname{Var}}
\newcommand{\cov}{\operatorname{Cov}}
\newcommand{\Hm}{\operatorname{Hm}}

\newcommand{\mc}[1]{\mathcal{#1}}

\newcommand{\mbb}[1]{\mathbb{#1}}
\newcommand{\til}[1]{\tilde{#1}}

\newcommand{\tB}{{\tilde{B}}}

\newcommand{\E}{{\mathbb{E}}}

\newcommand{\ctG}{{\tilde{\mathcal{G}}}}

\newcommand{\cI}{{\mathcal{I}}}

\newcommand{\cK}{{\mathcal{K}}}

\newcommand{\PP}{{\mathbb{P}}}

\newcommand{\ZZ}{{\mathbb{Z}}}

\newcommand{\one}{\mathbf{\mathbbm{1}}}
\newcommand{\0}{\mathbf{0}}
\newcommand\numberthis{\addtocounter{equation}{1}\tag{\theequation}}


\usepackage{url}

\newcommand*\samethanks[1][\value{footnote}]{\footnotemark[#1]}


\title{
Percolation for level-sets of Gaussian free fields on metric graphs
}
\author{ Jian Ding\thanks{Partially supported by an NSF grant DMS-1757479, an Alfred Sloan fellowship.}  \\ University of Pennsylvania \and Mateo Wirth\samethanks \\ University of Pennsylvania
}
\date{\today}


\begin{document}

\thispagestyle{empty}
\maketitle
\begin{abstract}
We study level-set percolation for Gaussian free fields on metric graphs. In two dimensions, we give an upper bound on the chemical distance between the two boundaries of a macroscopic annulus. Our bound holds with high probability conditioned on connectivity and is sharp up to a poly-logarithmic factor with an exponent of one-quarter. This substantially improves a previous result by Li and the first author. In three dimensions and higher, we provide rather precise estimates of percolation probabilities in different regimes which altogether describe a sharp phase transition.
\end{abstract}



\linespread{1.2}


\section{Introduction}
	
\subsection{Gaussian free fields on metric graphs}
	
	In this paper, we study Gaussian free fields on metric graphs of integer lattices, which are closely related to (discrete) Gaussian free fields on integer lattices. We begin with some basic definitions before stating our main results. Let $\{S_t : t \geq 0\}$ be a continuous-time random walk on $\ZZ^d$ with transition rates $\frac{1}{2d}$. For $d\geq 3$, the (discrete) Gaussian free field on $\mathbb Z^d$, $\{\phi_v: v\in \mathbb Z^d\}$, is defined as a mean-zero Gaussian process with covariance $\E (\phi_u \phi_v)$ given by  (denoting below by $\one_A$ the indicator function of the event $A$)
	\begin{equation}\label{Green definition}
	G(u,v) = \E_u\left[ \int_0^{\infty} \one_{S_t = v} dt\right] \quad u,v \in \mathbb Z^d\,.
	\end{equation}
	It is clear that the preceding definition cannot extend to $d=2$ because simple random walk is recurrent in the two-dimensional lattice. For this reason (as usual), for $d=2$ we define the Gaussian free field on a finite set $V\subset \mathbb Z^2$ with Dirichlet boundary conditions, denoted by $\{\phi_v: v\in V\}$, to be a mean zero Gaussian process with covariance $\E (\phi_u \phi_v)$ given by  
	\begin{equation}\label{Green definition}
	G(u,v) = \E_u\left[ \int_0^{\zeta} \one_{S_t = v} dt\right] \quad u,v \in V,
	\end{equation}
	where $\zeta = \inf\{t \geq 0 \,:\, S_t \in \partial V\}$ is the hitting time of the internal boundary $\partial V = \{v \in V\,:\, \exists u \in V^c,\, |u-v| = 1\}$.
	
	Let $\mathcal G = \mathcal G(V, E)$ be the subgraph of $\mathbb Z^d$ on $V$, where we usually let $V$  be a finite box for $d=2$ and we take $V = \mathbb Z^d$ for $d \geq 3$. To each $e\in E$ we associate a different compact interval $I_e$ of length $d$ and identify the endpoints of this interval with the two vertices adjacent to $e$. The metric graph $\ctG$ associated to $V$ is then defined to be $\ctG = \cup_{e\in E} I_e$. With this definition, it was shown in \cite{Lupu16} that the Gaussian free field on $\ctG$, denoted by $\{\til{\phi}_v \,:\, v \in \ctG\}$, can be constructed in two equivalent ways. The first is by extending $\phi$ to $\ctG$ in the following manner: for adjacent vertices $u,v$, the value of $\til{\phi}$ on the edge $e(u,v)$, conditioned on $\phi_u$ and $\phi_v$, is given by an independent bridge of length $d$ of a Brownian motion with variance 2 at time 1. We note in passing that we have chosen the convention that each edge of $\mc{G}$ has conductance $\frac{1}{2d}$ in order to be consistent with \cite{LawlerLimic10}.
	
	Alternatively, one can construct $\til{\phi}$ by first defining a Brownian motion $\{\til{B}_t \,:\, t \geq 0\}$ on $\ctG$ as in \cite[Section 2]{Lupu16}. $\til{B}$ behaves like a standard Brownian motion in the interior of the edges, while on the vertices (i.e. lattice points) it chooses to do excursions on each incoming edge uniformly at random (see \cite{Lupu16} for further details). We let $\til\zeta = \inf\{ t \geq 0 \,:\, \til{B}_t \in \partial V\}$ for $d=2$, $\til\zeta = \infty$ for $d\geq 3$, and by an abuse of notation let $\{G(u,v) \,:\, u,v \in \ctG\}$ be the density of the 0-potential of $\{\til{B}_t \,:\, 0 \leq t < \til\zeta\}$ (with respect to the Lebesgue measure on $\ctG$), where $u$ and $v$ are now arbitrary points in $\til{\mc{G}}$ (not necessarily  vertices). It is shown in \cite{Lupu16} that the trace of $\til{B}$ on $V$ (when parametrized by its local time at the vertices) is exactly the continuous-time simple random walk on $V$ (killed at $\partial V$ for $d=2$), and that therefore the two definitions of $G$ coincide for $u,v \in V$, justifying the abuse of notation. The Gaussian free field $\{\til{\phi}_v \,:\, v \in \ctG\}$ is then the continuous, mean-zero Gaussian field on $\ctG$ with covariance given by $\E[\til{\phi}_u \til{\phi}_v] = G(u,v)$. It was also shown in \cite{Lupu16} that the value of $G$ on the edges of $\til{\mc{G}}$ can be obtained by interpolation from the value on the vertices. For two pairs of adjacent vertices $(u_1, v_1)$ and $(u_2, v_2)$ in $V$, and two points $w_1 \in e(u_1, v_1)$ and $w_2 \in e(u_2, v_2)$ on the corresponding edges, taking the convention that either the edges are distinct or $(u_1,v_1) = (u_2,v_2)$ and letting $r_1 = |w_1 - u_1|$ and $r_2 = |w_2 - u_2|$ (here we are measuring the standard Euclidean distance), we have (c.f. \cite[Equation (2.1)]{Lupu16})
	\begin{align*}
	G(w_1, w_2) = &(1- r_1)(1-r_2) G(u_1,u_2) + r_1 r_2 G(v_1, v_2) + (1-r_1)r_2G(u_1,v_2) \\
	&+ r_1(1-r_2)G(v_1, u_2) + 2d(r_1\wedge r_2 - r_1r_2) \one_{(u_1,v_1) = (u_2, v_2)}. \numberthis \label{Metric Green Interpolation}
	\end{align*}

	\subsection{Main results}
	
	The main goal of the present paper is to study level-set percolation for Gaussian free fields on metric graphs. For $r \geq 1$ we let $V_r = [-r,r]^d \cap \ZZ^d$ be the points in the latice contained in the box of side-length $2r$ centered at the origin (we choose this convention so that all boxes can be centered at the origin). For $d = 2$ we take a sequence $\til{\phi}_N$ of fields defined on the metric graphs $\ctG_N$ associated to $V_N$ (with Dirichlet boundary conditions). For $h \in \mathbb R$ we let $\til{E}_{N}^{\geq h} = \{v \in \ctG_N \,:\, \til{\phi}_{N, v} \geq h\}$ be the level set, or excursion set, of $\til{\phi}_N$ above $h$ --- note that our choice of level set is different from that of \cite{DingLi18} by a flipping symmetry, in order to be consistent with the majority of the literature. Further, for $u,v \in \til{E}_{N}^{\geq h}$, we let the chemical distance $D_{N, h}(u,v)$ be the graph distance between $u$ and $v$ in $\til{E}_{N}^{\geq h}$, with $D_{N, h}(u,v) = \infty$ if $u$ and $v$ are disconnected in $\til{E}_{N}^{\geq h}$.  For two subsets $A, B \subset \ctG_N$, we let
	$
	D_{N, h}(A,B) = \inf \{D_{N, h}(u,v) \,:\, u \in A, v \in B\}$. The following result is an upper bound on the chemical distance between two boundaries of a macroscopic annulus, conditioned on percolation.
	\begin{theorem}\label{2D result}
		For any fixed  $h\in \mathbb R$,  and $0 < \alpha < \beta < \gamma < 1$, let $\mbb{P}_{N,h}^{\alpha,\gamma}$ be the law of $\til{\phi}_N$ conditioned on the event $\{D_{N,h}(V_{\alpha N},\partial V_{\gamma N}) < \infty\}$. Then for any $\epsilon > 0$, there exists a constant $C$ such that
		\begin{equation}\label{2D chemical distance}
		\limsup_{N \to \infty} \mbb{P}_{N,h}^{\alpha,\gamma}(D_{N, h}(V_{\alpha N},\partial V_{\beta N}) > C N (\log N)^{\frac{1}{4}}) \leq \epsilon.
		\end{equation}
	\end{theorem}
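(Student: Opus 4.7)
The plan is to build short paths in $\til{E}_N^{\geq h}$ via a multi-scale construction combined with a Cameron--Martin change of measure. I would fix dyadic scales $N_k = 2^{-k} N$ for $k = 0, 1, \dots, K$ with $K \asymp \log N$, and use the Markov decomposition $\til\phi_N = \til\phi_N^{\mathrm{harm},k} + \til\phi_N^{\mathrm{res},k}$, where the first summand is the harmonic extension from the grid at scale $N_k$ and the second is an independent bridge field inside each $N_k$-cell. This allows me to refine a path scale by scale, using the independence of the residuals across cells to localize each step and ensure that refinements at fine scales can be done conditionally on the coarse structure.

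The core is a one-scale crossing lemma: for a fixed annular shell of scale $N_k$, the level set $\til{E}_N^{\geq h}$ contains a path from the inner to the outer boundary of chemical length at most $(1+\eta_k)$ times the Euclidean diameter with probability at least $1-\epsilon_k$. I would prove this by fixing a narrow corridor across the shell and shifting $\til\phi_N$ upward by a small amount $\delta_k$ supported there; Girsanov produces a Radon--Nikodym derivative $\exp(\langle f_k, \til\phi_N\rangle_H - \tfrac{1}{2}\|f_k\|_H^2)$, with $\|f_k\|_H^2$ the Dirichlet energy of the shift. Under the shifted law the corridor is comfortably supercritical, so a near-straight path exists by standard FKG/RSW-type crossing estimates for the metric GFF, and transferring back to the original law via Cauchy--Schwarz on the Girsanov factor yields the claim. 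Tuning $\delta_k$ so that $\sum_k \|f_k\|_H^2$ stays bounded while $\prod_k(1+\eta_k) \leq (\log N)^{1/4}$ is precisely the trade-off that produces the exponent $1/4$: heuristically, $\log N$ scales with quadratic Dirichlet cost and variance scaling $\log(N/N_k)$ combine to give a fourth-root correction.

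The conditioning on $\{D_{N,h}(V_{\alpha N},\partial V_{\gamma N}) < \infty\}$ is then handled through the standard bound $\mbb{P}_{N,h}^{\alpha,\gamma}(\cE) \leq \mbb{P}(\cE)/\mbb{P}(\text{connection})$, where the denominator is lower bounded by yet another Cameron--Martin shift along a single macroscopic corridor from $V_{\alpha N}$ to $\partial V_{\gamma N}$, yielding $\mbb{P}(\text{connection}) \geq N^{-C(h)}$. The main obstacle, I expect, is executing the multi-scale induction uniformly: because the harmonic part at scale $N_k$ has variance of order $\log(N/N_k)$, the corridor widths and shifts $\delta_k$ must be carefully scale-adapted so that no single scale dominates the Girsanov cost, and the corridors at different scales must be arranged coherently so that the refinements concatenate into a single connected path. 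Closing this induction without accumulating an overhead larger than $(\log N)^{1/4}$ is where the delicate balancing of costs against lengths across $\log N$ scales has to be done most carefully.
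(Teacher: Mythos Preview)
Your approach is fundamentally different from the paper's and, as written, has a genuine gap at its quantitative heart. The paper does not build paths at all: it runs an exploration martingale $M_{\mu N,t} = \E[X_{\partial V_{\mu N}} \mid \mc{F}_{\cI_t}]$, where $\cI_t$ is the metric ball of radius $t$ around $V_{\alpha N}$ in $\til E_N^{\geq h}$, bounds $\langle M_{\mu N}\rangle_\infty \leq c_3$ from above, and shows that on a good event of conditional probability $\geq 1-\epsilon$,
\[
\langle M_{\mu N}\rangle_{D_{N,h}(V_{\alpha N},\partial V_{\beta N})} \;\geq\; \kappa\,\frac{D_{N,h}(V_{\alpha N},\partial V_{\beta N})^2}{N^2\sqrt{\log N}}\, .
\]
The $\sqrt{\log N}$ comes from a Cauchy--Schwarz step $\sum_u W(u)^2 \geq \sum_j \mathcal W_j^2/|\mc B_j|$, where the $\mc B_j$ are dyadic level sets of $\til\phi - h$ on the exploration front, together with the estimate $\E|\mc B_j| \leq c\,N^2/\sqrt{\log N}$ (proved via a second, local exploration martingale that controls pointwise connection probabilities). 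Combining the two quadratic-variation bounds yields $D \leq C N(\log N)^{1/4}$ directly; there is no path construction, no change of measure, no multi-scale induction.

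The gap in your proposal is that the arithmetic producing $(\log N)^{1/4}$ is absent. Your one-scale crossing lemma asserts a crossing of chemical length $(1+\eta_k)$ times the Euclidean diameter with failure probability $\epsilon_k$ after a Girsanov shift of cost $\|f_k\|_H^2$, but you never specify what trade-off among $\eta_k$, $\epsilon_k$, and $\|f_k\|_H^2$ is actually achievable. For the 2D metric-graph GFF there is no known statement of the form ``a shift of bounded Dirichlet energy makes a corridor so supercritical that crossings are near-linear''; in two dimensions every fixed level is critical in the RSW sense, and a bounded shift does not alter that. To get $\prod_k(1+\eta_k) \leq (\log N)^{1/4}$ over $K\asymp\log N$ scales you need $\eta_k = O((\log\log N)/\log N)$, and nothing in your sketch explains why the Girsanov cost for such tight control is summable. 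The sentence ``quadratic Dirichlet cost and variance scaling $\log(N/N_k)$ combine to give a fourth-root correction'' is not an argument; it is the conclusion restated. Separately, your treatment of the conditioning is too lossy: you lower bound the connection probability by $N^{-C(h)}$, whereas Lemma~\ref{2D percolation probability} gives a uniform constant $c_1>0$. You would in fact need the constant bound, since dividing by $N^{-C(h)}$ would force your bad-event probability to be polynomially small in $N$, which a Cauchy--Schwarz transfer of a Girsanov factor will not give.
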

	
	\begin{remark}\label{rem-ALP}
		Note that $\PP(D_{N,h}(V_{\alpha N},\partial V_{\gamma N}) < \infty)$ stays above 0 uniformly in $N$ (See Lemma~\ref{2D percolation probability}).  In a work in preparation by Aru--Lupu--Sep\'ulveda, it is expected that the following may be deduced as a consequence of their main results: for any $\epsilon > 0$, there exist $\delta, N_0 > 0$ such that for all $N > N_0$ and $\gamma < \beta+\delta$, we have $\PP^{\alpha,\beta}_{N,h}(D_{N,h}(V_{\alpha N},\partial V_{\gamma N}) < \infty) \geq 1- \epsilon$. Provided with this continuity of the percolation probability, one would then be able to derive from Theorem~\ref{2D result} that 
		\begin{equation}\label{eq-ALP}
		\limsup_{N \to \infty} \mbb{P}_{N,h}^{\alpha,\beta}(D_{N, h}(V_{\alpha N},\partial V_{\beta N}) > C N (\log N)^{\frac{1}{4}}) \leq \epsilon.
		\end{equation}
	\end{remark}

	For $d \geq 3$ we let $V = \ZZ^d$, let $\0$ be the origin in $\ZZ^d$, and let $\ctG$ be the metric graph associated to $V$. In the present paper, we will focus on the behavior of $p_{N, h} = \PP(\0 \stackrel{\geq h}{\longleftrightarrow} \partial V_N)$ as $N \to \infty$, where $\{\0 \stackrel{\geq h}{\longleftrightarrow} \partial V_N\}$ denotes the event that $\0$ is connected to $\partial V_N$ in $\til{E}^{\geq h} = \{v \in \ctG \,:\, \til{\phi}_{ v} \geq h\}$.  We obtain the following results for supercritical, subcritical, and critical percolation, respectively. 
	
	The first result is an explicit characterization of the probability that $\0$ is in an infinite connected component of $\til{E}^{\geq -h}$ (for the rest of this section, we take $h$ to be positive).
	\begin{theorem}\label{High d super-critical result}
		Let $\sigma^2_d = \var[\til{\phi}_\0]$. Then for any $h > 0$,
		\begin{equation}\label{Supercritical result}
		\lim_{N \to \infty} p_{N, -h} = \E\Big[ (1 - e^{-2h(\til{\phi}_\0 + h)/\sigma^2_d}) \one_{\til{\phi}_\0 > -h}\Big].
		\end{equation}
	\end{theorem}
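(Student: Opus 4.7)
The first move is to note that $p_{N,-h} = \PP(A_N)$ is decreasing in $N$, where $A_N = \{\0 \stackrel{\geq -h}{\longleftrightarrow} \partial V_N\}$. Indeed, for $M > N$, any continuous path in $\ctG$ from $\0$ to $\partial V_M$ lying in $\til{E}^{\geq -h}$ must cross $\partial V_N$, so $A_M \subseteq A_N$. Hence
\[
\lim_{N\to\infty} p_{N,-h} \;=\; \PP\Bigl(\bigcap_{N\geq 1} A_N\Bigr) \;=\; \PP(\0 \leftrightarrow \infty \text{ in } \til{E}^{\geq -h}),
\]
i.e.\ the probability that the component of $\0$ in the excursion set is unbounded. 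It thus suffices to show that this percolation probability equals the right-hand side of \eqref{Supercritical result}.

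\textbf{Step 2: Conditional identity.} The plan is to establish the pointwise conditional identity
\[
\PP(\0\leftrightarrow\infty \mid \til\phi_\0) \;=\; \Bigl(1 - e^{-2h(\til\phi_\0+h)/\sigma_d^2}\Bigr)\,\one_{\til\phi_\0 > -h},
\]
from which the theorem follows by integrating against the Gaussian law of $\til\phi_\0$. The shape of the right-hand side is suggestive: it is exactly the probability that a Brownian motion starting at $\til\phi_\0$ with drift $h$ and diffusion coefficient $\sigma_d^2$ never hits $-h$. This strongly hints that, conditionally on the value at $\0$, the escape-to-infinity event should be identified with a first-passage event for some auxiliary Brownian motion on $\ctG$.

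\textbf{Step 3: Coupling with Brownian motion via Lupu's isomorphism.} To establish the conditional identity, the natural tool is Lupu's generalized second Ray--Knight isomorphism on the metric graph. Set $\ell = h^2/2$ so that $\sqrt{2\ell}=h$, and let $\til B$ be a Brownian motion on $\ctG$ started at $\0$, with local times $L_v(\cdot)$ and first passage time $\tau_\ell = \inf\{t:L_\0(t) > \ell\}$ at $\0$; let $\til\phi'$ be an independent metric-graph GFF on $\ctG$. Lupu's coupling then realizes the shifted field via the identity $(\til\phi_v+h)^2 = (\til\phi'_v)^2 + 2L_v(\tau_\ell)$ with the sign of $\til\phi+h$ positive on the component of $\0$. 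Under this coupling, the component of $\0$ in $\{\til\phi+h>0\}$ is exactly the trace of $\til B$ on $[0,\tau_\ell]$ together with the sign clusters of $\til\phi'$ that it meets. Since the centered metric-graph GFF is subcritical in $d\geq 3$ (Lupu), all sign clusters of $\til\phi'$ are a.s.\ bounded, so the cluster of $\0$ is unbounded essentially when and only when $\til B$'s trace is. Conditioning on $\til\phi_\0 = x$ fixes the constraint $(x+h)^2 = (\til\phi'_\0)^2 + 2 L_\0(\tau_\ell)$ at $\0$, and one then computes the resulting probability using the fact that for transient Brownian motion on $\ctG$ ($d\geq 3$) the total local time $L_\0^\infty$ is exponential with mean $G(\0,\0) = \sigma_d^2$. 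A direct calculation identifies the outcome with the Brownian exit probability $1 - e^{-2h(x+h)/\sigma_d^2}$.

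\textbf{Main obstacle.} The technical heart lies in Step 3, specifically in implementing Lupu's isomorphism on the infinite graph of $\ZZ^d$ rather than on a finite box with Dirichlet data. One must deal jointly with the event $\{\tau_\ell = \infty\}$ (where the Brownian motion's total local time at $\0$ falls short of $h^2/2$, which is a positive-probability event in the transient regime), verify that the contribution of sign clusters of $\til\phi'$ touching the Brownian trace does not spuriously inflate the cluster, and perform the conditional computation with enough precision to recover the exact exponential factor. Once this coupling is in place, the remaining computation is a matter of manipulating densities and local-time distributions.
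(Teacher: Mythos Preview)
Your route is genuinely different from the paper's. The paper never invokes Lupu's isomorphism for this theorem; it runs through the exploration martingale of Section~\ref{sec:martingale}. Via Proposition~\ref{Law of capacity} and Corollary~\ref{Capacity and radius}, $p_{N,-h}$ is sandwiched between $\PP(\tau_{-h}\geq f_1(N))$ and $\PP(\tau_{-h}\geq f_2(N))$, where $\tau_{-h}$ is the first-passage time of a standard Brownian motion (independent of $\til\phi_\0$) below the line $t\mapsto -ht-(\til\phi_\0+h)/\sigma_d^2$, and $f_1(N),f_2(N)\to\infty$ by Lemma~\ref{Capacity bounds}. The formula then follows at once from $\PP(B_t>-ht-b\text{ for all }t)=1-e^{-2hb}$. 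In particular the paper's one-dimensional Brownian motion arises from a \emph{time-change of the exploration martingale}, not from Brownian motion on $\ctG$.

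There is a real gap in your Step~3: the isomorphism you write down is not the correct one. The generalized second Ray--Knight identity with a single Brownian motion on $\ctG$ started at $\0$ and run to inverse local time $\tau_\ell$ couples $\tfrac12(\eta+h)^2$ with $\tfrac12(\eta')^2+L(\tau_\ell)$ where $\eta,\eta'$ are GFFs \emph{pinned to zero at $\0$}, not free fields; for the free field on a transient graph the appropriate isomorphism uses random interlacements. This is not cosmetic. Under your stated coupling one would have $\til\phi_\0+h>0$ almost surely (since $\0$ lies on the trace), and the unconditional percolation probability would come out as $\PP(\tau_\ell=\infty)=\PP(L_\0(\infty)<h^2/2)=1-e^{-h^2/(2\sigma_d^2)}$, whereas integrating the right-hand side of \eqref{Supercritical result} gives $2\Phi(h/\sigma_d)-1$; these disagree already to leading order in $h$. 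Equivalently, conditioning the free field on $\til\phi_\0=x$ leaves a pinned GFF shifted by the \emph{non-constant} function $h+x\,G(\0,\cdot)/\sigma_d^2$, which falls outside any off-the-shelf Ray--Knight statement, so the ``direct calculation'' you allude to does not close and the conditional formula $1-e^{-2h(x+h)/\sigma_d^2}$ is not recovered. A repair via the interlacement isomorphism may be possible, but the argument would look quite different from your sketch.
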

	\noindent The next result establishes the exponential decay of $p_{N, h}$ as $N \to \infty$ for $d > 3$, with an extra log factor for $d = 3$.
	\begin{theorem}\label{High d sub-critical result}
		For any $h > 0$ and $d \geq 3$, there exists a constant $c$ such that for $N > 1$,
		\begin{align}
		p_{N, h} &\leq  \exp(-ch^2 N/\log N), \quad d =3, \label{Subcritical 3D result}\\
		p_{N, h} &\leq \exp(-ch^2 N), \quad d>3. \label{Subcritical high d result}
		\end{align}
	\end{theorem}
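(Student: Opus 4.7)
The plan is to combine a Gaussian averaging inequality along lattice paths with a multi-scale Markov decomposition of $\tilde\phi$ in order to extract the correct $h^2$ exponent in the crossing probability.

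First, by continuity of $\tilde\phi$ on each edge of $\ctG$, the event $\{\0\stackrel{\ge h}{\leftrightarrow}\partial V_N\}$ forces the existence of a nearest-neighbor lattice path $\gamma=(\0=v_0,\ldots,v_\ell)\subset V_N$ with $\ell\ge N$ and $\tilde\phi_{v_i}\ge h$ for every $i$. For any fixed such path, the trivial implication $\min_i\tilde\phi_{v_i}\ge h\Rightarrow \ell^{-1}\sum_i\tilde\phi_{v_i}\ge h$ combined with the Gaussian tail bound gives
\[
\PP(\tilde\phi_{v_i}\ge h\ \forall i)\le\exp\!\Bigl(-\tfrac{h^2\ell^2}{2\sum_{i,j}G(v_i,v_j)}\Bigr).
\]
Using the classical bound $G(u,v)\le C_d|u-v|^{2-d}$ one verifies, for a ``straight'' lattice path, $\sum_{i,j}G(v_i,v_j)\le C_d\ell$ when $d>3$ and $\le C_3\ell\log\ell$ when $d=3$---the two cases corresponding to whether $\sum_k k^{-(d-2)}$ converges. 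This yields the per-path bound $\exp(-c_d h^2\ell)$ for $d>3$ and $\exp(-c_3 h^2\ell/\log\ell)$ for $d=3$, which are precisely the target exponents at scale $\ell$.

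A direct union bound over self-avoiding paths would cost $\exp(O(\ell))$ and destroy the Gaussian exponent for small $h$, so I would globalize via a dyadic Markov decomposition. Let $A_k=V_{2^{k+1}}\setminus V_{2^k}$ for $k_0\le k\le\lfloor\log_2 N\rfloor$, with $k_0=k_0(h)$ a large enough constant. The connection event forces a crossing of each $A_k$ in $\tilde E^{\ge h}$. Writing $\tilde\phi|_{A_k}=\psi_k+\chi_k$ via the Gibbs--Markov decomposition, with $\chi_k$ an independent Dirichlet metric-graph GFF on $A_k$ and $\psi_k$ the harmonic extension of $\tilde\phi|_{\partial A_k}$, a Borell--TIS-type supremum estimate gives $\max_k\sup_{A_k}|\psi_k|\le h/2$ with probability $1-o(1)$; on this event each $\tilde\phi$-crossing of $A_k$ forces an independent $\chi_k$-crossing at level $h/2$. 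For each $A_k$ I would then construct a polynomial-size family $\mc F_k$ of canonical straight skeleton paths crossing $A_k$ such that every $\chi_k$-crossing at level $h/2$ forces $\chi_k\ge h/4$ along some skeleton in $\mc F_k$. The single-path bound of the previous paragraph then gives per-annulus crossing probability at most $\exp(-c h^2 2^k)$ for $d>3$ and $\exp(-c h^2 2^k/k)$ for $d=3$, and multiplying across independent scales using $\sum_k 2^k\asymp N$ and $\sum_k 2^k/k\asymp N/\log N$ yields the stated bounds.

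The main technical obstacle is the construction of $\mc F_k$ together with its sprinkling property, namely that an arbitrary $\chi_k$-crossing of $A_k$ at level $h/2$ transfers onto some straight skeleton at the slightly weaker level $h/4$. A natural route is to coarse-grain $A_k$ at scale $\sqrt{2^k}$ into boxes, let $\mc F_k$ consist of monotone box-sequences joining the two boundaries of $A_k$ with a straight representative in each box (a family of polynomial size in $2^k$), and then invoke Gaussian regularity for $\chi_k$ to argue that a continuous level-$h/2$ crossing must contain lattice vertices inside some such sequence at which $\chi_k\ge h/4$. The continuity of the metric-graph field plays an essential role here: it allows a connected level-set crossing to be projected onto a subexponential family of straight skeletons without losing the Gaussian exponent, which is precisely what is required to obtain the sharp $h^2$ rate and to recover the logarithmic correction in the critical dimension $d=3$.
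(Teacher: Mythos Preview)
Your strategy contains a genuine gap at the step you yourself flag as the main technical obstacle, and the proposed resolution does not work.

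Your single-path Gaussian bound requires the path to be essentially straight (so that $\sum_{i,j}G(v_i,v_j)\le C\ell$ or $C\ell\log\ell$), whereas an actual level-set crossing of $A_k$ can be arbitrarily tortuous. Your plan is to coarse-grain at scale $\sqrt{2^k}$ and argue via ``Gaussian regularity'' that a $\chi_k$-crossing at level $h/2$ forces $\chi_k\ge h/4$ along some fixed straight skeleton. But the lattice GFF in $d\ge 3$ has no such regularity: for neighbouring lattice points $u,v$ in the bulk of $A_k$ one has $\var(\chi_k(u)-\chi_k(v))$ bounded below by a positive constant independent of $k$, so knowing $\chi_k\ge h/2$ at one point of a mesoscopic box says essentially nothing about $\chi_k$ along a prescribed straight segment through that box. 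Without this transfer, the family of coarse paths needed to capture an arbitrary crossing is the family of all self-avoiding walks in the coarse grid, whose cardinality is exponential in its length and erases the Gaussian exponent---precisely the entropy problem you set out to avoid. I do not see how the skeleton reduction can be carried out along these lines.

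The paper's proof is entirely different and sidesteps path-counting. One runs the exploration martingale $M_{K,t}=\E[\tilde\phi_{x_K}\mid\mc F_{\cI_t}]$ from $\cI_0=\{\0\}$ toward a far target $x_K$. After a Dubins--Schwarz time change, the process becomes a Brownian motion which, while the cluster is still growing, must stay above the line $t\mapsto ht-(\tilde\phi_\0-h)/\sigma_d^2$; the event $\{\0\stackrel{\ge h}{\longleftrightarrow}\partial V_N\}$ forces this Brownian motion to survive that barrier up to time $f_2(N)$, the minimal normalised capacity of a connected set joining $\0$ to $\partial V_N$. The elementary hitting-time bound $\PP(\tau_h\ge T\mid\mc F_\0)\le\exp\bigl(-h^2T/2+h(\tilde\phi_\0-h)/\sigma_d^2\bigr)$ together with the capacity estimate $f_2(N)=\Omega\bigl(N/(\log N)^{\one_{d=3}}\bigr)$ then gives the theorem in two lines. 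The crossing event is encoded through a single scalar (capacity) rather than through any combinatorial path structure, which is why no union bound is ever needed.
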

	\noindent The third result establishes the polynomial decay of $p_{0, N}$ as $N \to \infty$ and provides bounds on the exponent.
	\begin{theorem} \label{High d critical result}
		For any $d \geq 3$, there exist constants $C,c > 0$ such that for $N > 1$,
		\begin{align}
		\frac{c}{\sqrt{N}} \leq p_{N,0} \leq C\sqrt{\frac{\log N}{N}}, \quad d = 3, \label{Critical 3D result}\\
		\frac{c}{N^{d/2 - 1}} \leq p_{N,0} \leq \frac{C}{\sqrt{N}}, \quad d > 3. \label{Critical high d result}
		\end{align}
	\end{theorem}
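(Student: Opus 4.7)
I split the argument into the upper and lower bound proofs, which rely on rather different techniques.

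\emph{Upper bound.} My plan is to exploit the Brownian-bridge structure of the metric-graph GFF: conditional on the vertex values, the restriction of $\til\phi$ to each edge is an independent Brownian bridge with variance parameter $2$. On the event $\{\0 \stackrel{\geq 0}{\longleftrightarrow} \partial V_N\}$, the cluster of $\0$ contains a continuous path in $\ctG$ to $\partial V_N$ of metric-graph length of order $N$, along which $\til\phi \geq 0$. Viewing $\til\phi$ along such a path as a one-dimensional process that behaves locally like a Brownian motion, I would invoke the classical estimate that a Brownian motion started at $a \geq 0$ stays nonnegative on $[0,T]$ with probability $\asymp (a\vee 1)/\sqrt T$. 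Integrating against the marginal law $\til\phi_\0 \sim \mc N(0,\sigma_d^2)$ (and using $\E[\til\phi_\0^+] < \infty$) then yields the $O(N^{-1/2})$ bound. In $d=3$ the extra $\sqrt{\log N}$ factor will arise from controlling the typical maximum of the field on the outer boundary on a logarithmic scale.

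\emph{Lower bound.} Lupu's two-point formula $\PP(\0 \stackrel{\geq 0}{\longleftrightarrow} v) = \tfrac{2}{\pi}\arcsin\bigl(G(\0,v)/\sqrt{G(\0,\0)G(v,v)}\bigr) \asymp G(\0,v)/\sigma_d^2$ gives only the trivial bound $p_{N,0} \gtrsim 1/N^{d-2}$, which is strictly weaker than the stated $1/\sqrt N$ already at $d=3$ and is much too weak for $d > 3$. To obtain the sharper bound $p_{N,0} \gtrsim 1/N^{d/2-1}$, I would use the conditioning decomposition
\begin{equation*}
p_{N,0} \;\geq\; \PP(\til\phi_\0 \geq L) \cdot \PP\bigl(\0 \stackrel{\geq 0}{\longleftrightarrow} \partial V_N \,\bigm|\, \til\phi_\0 \geq L\bigr),
\end{equation*}
choosing $L = L_N \asymp \sqrt{(d-2)\log N}$ so that the first factor is $\asymp N^{-(d-2)/2}$ by Gaussian tails. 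Under this conditioning the field acquires a deterministic shift $(L_N/\sigma_d^2)G(\0,\cdot)$, which significantly boosts connectivity near $\0$. I would then bound the conditional connection probability below by a positive constant via a Paley--Zygmund argument on the number of points of $\partial V_N$ connected to $\0$, with first and second moments controlled by applying Lupu's arcsin formula to the shifted (noncentered) field.

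\emph{Main obstacle.} The hardest step is the upper bound. A naive union bound over the exponentially many nearest-neighbor paths of length $N$ from $\0$ to $\partial V_N$ is hopeless, so one must exploit the joint Brownian-motion structure of $\til\phi$ across edges rather than arguing path-by-path. A natural route is an exploration or martingale argument along a cleverly chosen random curve in $\ctG$ --- for instance one traced by a Brownian motion, so that $\til\phi$ evaluated along this curve has a tractable joint distribution --- combined with an optional-stopping argument that extracts the $1/\sqrt N$ persistence probability familiar from one-dimensional Brownian motion. The secondary difficulty, in the lower-bound argument, is controlling the conditional second moment under the nontrivial Gaussian tilt; this should be tractable by reducing to an unconditional computation via standard Cameron--Martin/Girsanov considerations for the metric-graph GFF.
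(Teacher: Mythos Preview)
Your upper bound sketch points in the right direction but the specific mechanism you describe is not sound, and your lower bound strategy has a genuine gap.

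\textbf{Upper bound.} The field $\til\phi$ restricted to a path from $\0$ to $\partial V_N$ is \emph{not} locally a Brownian motion in any usable sense: along a deterministic path it is a Gaussian process with long-range covariance $G(\cdot,\cdot)$, and along a random (field-dependent) path you lose any a priori law. Your fallback suggestion of ``an exploration or martingale argument'' is correct in spirit, and this is exactly what the paper does --- but the martingale is not $\til\phi$ along a curve. Rather, one fixes a far-away target point $x_K$, grows the metric ball $\cI_t$ around $\0$ in $\til E^{\geq 0}$, and tracks $M_{K,t}=\E[\til\phi_{x_K}\mid\mc F_{\cI_t}]$. By Dubins--Schwarz this is a time-changed Brownian motion started at $\til\phi_\0\,G(\0,x_K)/\sigma_d^2$ and stopped when it hits $0$; the quadratic variation at the moment $\cI_t$ reaches $\partial V_N$ is (after normalization) at least the capacity of a set connecting $\0$ to $\partial V_N$, which is $\Omega(N/(\log N)^{\one_{d=3}})$. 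The $1/\sqrt T$ persistence probability then gives the upper bound. In particular, the $\sqrt{\log N}$ in $d=3$ is a potential-theoretic artifact (the capacity of a line segment in $\ZZ^3$ is only $N/\log N$), not a field-maximum effect as you suggest.

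\textbf{Lower bound.} Your decomposition
\[
p_{N,0}\ \ge\ \PP(\til\phi_\0\ge L_N)\cdot\PP\bigl(\0\stackrel{\ge0}{\longleftrightarrow}\partial V_N\bigm|\til\phi_\0\ge L_N\bigr)
\]
cannot produce $N^{-(d-2)/2}$ with $L_N\asymp\sqrt{\log N}$. The conditional two-point probability scales like $L_N\,G(\0,v)/\sigma_d^2\asymp L_N/N^{d-2}$, so the first moment of your counting variable $Z$ is $\asymp N L_N$. But the true one-arm probability conditioned on $\til\phi_\0=L_N$ is only of order $L_N/N^{(d-2)/2}$ (this is what the martingale argument gives, and it is sharp up to logs), hence $\E[Z^2\mid\til\phi_\0=L_N]\gtrsim(\E Z)^2/\PP(Z>0)\asymp N^2L_N\,N^{(d-2)/2}\gg(\E Z)^2$. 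Paley--Zygmund therefore cannot yield a conditional probability bounded away from $0$; at best it recovers $L_N/N^{(d-2)/2}$, and multiplying by $\PP(\til\phi_\0\ge L_N)\asymp N^{-(d-2)/2}$ gives only $N^{-(d-2)}$, matching the trivial two-point bound you were trying to beat. The Cameron--Martin shift $(L_N/\sigma_d^2)G(\0,\cdot)$ is of order $\sqrt{\log N}/N^{d-2}$ at distance $N$ and does not meaningfully boost connectivity there.

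The paper obtains the lower bound from the \emph{same} martingale picture as the upper bound: the quadratic variation when $\cI_t$ reaches $\partial V_N$ is at most the capacity of $V_N$, which is $O(N^{d-2})$, so the persistence probability is at least $c/\sqrt{N^{d-2}}=c/N^{(d-2)/2}$. No tilting or second-moment computation is needed.
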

	
	Finally, for $d=3$ we provide some bounds on the critical window for $h$. Below we take $h_N > 0$ to be a sequence of levels that converges to 0, and write $p_N^+$ for $p_{N,h_N}$ and $p_N^-$ for $p_{N,-h_N}$ to simplify notation.
	\begin{theorem}\label{3D critical window result}
		Suppose that there exists a constant $C > 0$ such that $h_N \leq C N^{-1/2}$ for $N \geq 1$. Then there exists a constant $c > 0$ such that for $N \geq 1$,
		\begin{equation}\label{eq-p+-lower}
		p_N^+ \geq \frac{c}{\sqrt{N}}.
		\end{equation}
		Conversely, if
		$
		\liminf_{N \to \infty} h_N \sqrt{\frac{N}{\log N \log \log N}} \geq C,
		$
		for a large enough constant $C > 0$, we have
		\begin{equation}\label{eq-p+-upper}
		\lim_{N \to \infty} \frac{p_N^+}{p_{N,0}} = 0.
		\end{equation}
		Furthermore, if there exists a constant $C > 0$ such that $h_N \leq C (\log N/ N)^{1/2}$ for $N \geq 1$, then there exists a constant $c > 0$ such that for $N > 1$
		\begin{equation}\label{eq-p--upper}
		p_N^- \leq c\sqrt{\frac{\log N}{N}}.
		\end{equation}
		Conversely, if
		$
		\lim_{N \to \infty} h_N \sqrt{\frac{N}{\log N}}= \infty,
		$ we have
		\begin{equation}\label{eq-p--lower}
		\lim_{N \to \infty} \frac{p_{N}^-}{p_{N,0}} = \infty.
		\end{equation}
	\end{theorem}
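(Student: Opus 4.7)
The plan is to derive all four estimates of Theorem~\ref{3D critical window result} from quantitative, $h$-uniform refinements of the arguments behind Theorems~\ref{High d super-critical result}--\ref{High d critical result}.

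\emph{Near-supercritical side (proving \eqref{eq-p--upper} and \eqref{eq-p--lower}).} The first step is to establish a non-asymptotic version of the formula underlying Theorem~\ref{High d super-critical result}: for each $h > 0$ and $N$,
\[
p_{N, -h} \;=\; \E\!\left[\Bigl(1 - \exp\!\bigl(-\tfrac{2h(\til\phi_\0 + h)}{G_N(\0,\0)}\bigr)\Bigr)\one_{\til\phi_\0 > -h}\right] + \varepsilon_N(h),
\]
where $G_N(\0,\0)$ is the Green's function of the Brownian motion on $\ctG$ killed at $\partial V_N$, and $\varepsilon_N(h)$ encodes the probability of $\0$ lying in a finite cluster of diameter at least $N$ at level $-h$. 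For $d=3$, standard heat-kernel estimates give $|G_N(\0,\0) - \sigma_d^2| = O(1/N)$, while bounding $\varepsilon_N(h)$ by the critical one-arm probability $p_{N,0} \le C\sqrt{\log N/N}$ (from Theorem~\ref{High d critical result}) yields $|\varepsilon_N(h)| \le C\sqrt{\log N/N}$ uniformly in bounded $h$. A first-order Taylor expansion in $h$ of the main term then produces $c_1 h - C/\sqrt N \le p_{N, -h} \le c_2 h + C\sqrt{\log N/N}$ for $h \in (0,1]$. The upper bound immediately gives \eqref{eq-p--upper} when $h_N \le C\sqrt{\log N/N}$; the lower bound, combined with $p_{N,0} \le C\sqrt{\log N/N}$, yields $p_N^-/p_{N,0} \gtrsim h_N\sqrt{N/\log N} \to \infty$, proving \eqref{eq-p--lower}.

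\emph{Near-subcritical side (proving \eqref{eq-p+-lower} and \eqref{eq-p+-upper}).} For \eqref{eq-p+-lower}, I would revisit the second-moment argument underlying $p_{N,0} \ge c/\sqrt N$. Applying Paley–Zygmund to $X = \sum_{v \in \partial V_N}\one_{\0 \stackrel{\ge h}{\longleftrightarrow} v}$ gives $p_{N,h} \ge (\E X)^2/\E[X^2]$. The two-point function $P(\0 \stackrel{\ge h}{\longleftrightarrow} v)$ has an explicit metric-graph representation involving the product $(\til\phi_\0 - h)_+(\til\phi_v - h)_+$ and Green's function ratios, and for $|v| \ge \sqrt N$ and $|h| \le C/\sqrt N$ it differs from its value at $h=0$ only by a multiplicative constant. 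Since such pairs drive both moments in the proof of Theorem~\ref{High d critical result}, the Paley–Zygmund ratio remains $\gtrsim 1/\sqrt N$, giving \eqref{eq-p+-lower}. For \eqref{eq-p+-upper}, the task is to sharpen Theorem~\ref{High d sub-critical result} to a comparison bound of the form
\[
\frac{p_{N,h}}{p_{N,0}} \;\le\; C(\log N)^{C_1}\,\exp\!\left(-\frac{c\,h^2 N}{\log N}\right),
\]
via a dyadic multi-scale argument that upgrades a crossing at level $0$ to one at level $h$ through a Cameron–Martin tilt on each annular scale, paying exponential costs that sum to $\exp(-c h^2 N/\log N)$; the polynomial prefactor $(\log N)^{C_1}$ comes from a union bound over $O(\log N)$ scales. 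Under $\liminf h_N\sqrt{N/(\log N\log\log N)} \ge C$ with $C$ large enough that $cC^2 > C_1$, the exponential factor $(\log N)^{-cC^2}$ then forces $p_N^+/p_{N,0} \to 0$, which is \eqref{eq-p+-upper}.

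\emph{Main obstacle.} The most delicate step is the non-asymptotic identity for $p_{N,-h}$ with explicit control of $\varepsilon_N(h)$: this requires adapting the excursion-theoretic derivation of Theorem~\ref{High d super-critical result} to finite $N$ and splicing in the critical one-arm control from Theorem~\ref{High d critical result}. Similar care is needed in the multi-scale argument for \eqref{eq-p+-upper} to handle correlations between the Cameron–Martin tilts at different scales, which is precisely where the $\log\log N$ in the hypothesis enters.
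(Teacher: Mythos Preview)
Your plan departs substantially from the paper's route, and the near-supercritical part contains a real gap.

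\textbf{What the paper does.} All four estimates are proved in a unified way from the exploration-martingale machinery already set up for Theorems~\ref{High d super-critical result}--\ref{High d critical result}: by Corollary~\ref{Capacity and radius}, $\PP(\tau_{\pm h_N}\ge f_1(N))\le p_N^{\mp}\le \PP(\tau_{\pm h_N}\ge f_2(N))$, where $\tau_h$ is the hitting time of a line with slope $h$ by a Brownian motion (Proposition~\ref{Law of capacity}). One then plugs the explicit formula of Proposition~\ref{Hitting times with drift} into these inequalities and controls the resulting expressions with the elementary two-sided bounds of Lemma~\ref{Elementary hitting time bound} on $\bar\Phi(x-y)-e^{2xy}\bar\Phi(x+y)$. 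No new probabilistic input is needed beyond Lemma~\ref{Capacity bounds} on $f_1,f_2$; the four statements drop out of calculus.

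\textbf{The gap in your supercritical argument.} Your upper bound \eqref{eq-p--upper} hinges on $|\varepsilon_N(h)|\le C\sqrt{\log N/N}$, where $\varepsilon_N(h)=p_{N,-h}-p_{\infty,-h}$ is the probability that the level-$(-h)$ cluster of $\0$ reaches $\partial V_N$ but is finite. You justify this by ``bounding $\varepsilon_N(h)$ by the critical one-arm probability $p_{N,0}$'', but no such domination is available: clusters at level $-h$ are \emph{larger} than at level $0$, so there is no monotone coupling sending the event $\{$finite but reaches $\partial V_N$ at level $-h\}$ into $\{\0\leftrightarrow\partial V_N$ at level $0\}$. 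At $h=0$ one has $\varepsilon_N(0)=p_{N,0}$ exactly, so the inequality you want is already tight there and cannot be improved for free as $h$ increases. Without this bound, your decomposition is circular: you are trying to bound $p_{N,-h}$ and your error term is essentially $p_{N,-h}$ minus something explicit. (Your lower bound for \eqref{eq-p--lower} via $p_{N,-h}\ge p_{\infty,-h}\sim ch$ is fine and matches the paper's conclusion, though the paper gets it the same way as the other three.)

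\textbf{On the subcritical side.} Your second-moment proposal for \eqref{eq-p+-lower} and the multi-scale Cameron--Martin scheme for \eqref{eq-p+-upper} may be salvageable in principle, but both are far more elaborate than what is needed, and the latter is only a sketch: controlling the interaction of tilts across scales for a long-range correlated field is nontrivial. The paper obtains \eqref{eq-p+-lower} in one line from $p_N^+\ge\PP(\tau_{h_N}\ge f_1(N))$ and Lemma~\ref{Elementary hitting time bound}, using that $h_N\sqrt{f_1(N)}=O(1)$ when $h_N\le C/\sqrt N$; and \eqref{eq-p+-upper} from $p_N^+\le\PP(\tau_{h_N}\ge f_2(N))\le \exp(-h_N^2 f_2(N)/2)/\sqrt{f_2(N)}\cdot O(1)$, where $f_2(N)=\Omega(N/\log N)$ makes the exponential kill the $\sqrt{\log N}$ prefactor once $h_N^2 N/\log N\gg\log\log N$.

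In short: revisit the problem through Corollary~\ref{Capacity and radius} rather than trying to make the limiting formula of Theorem~\ref{High d super-critical result} quantitative.
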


	\subsection{Related work}
	
	The chemical distance on level sets for $d=2$ has been previously studied in \cite{DingLi18} (we refer the reader to \cite{DingLi18} for another extensive discussion of related work), where it was proved that with positive probability the chemical distance between two boundaries of a macroscopic annulus is at most $Ne^{(\log N)^\alpha}$ for any fixed $\alpha>1/2$. Our Theorem~\ref{2D result} improves on \cite{DingLi18} in the following two ways:
	\begin{itemize}
		\item Instead of proving a positive probability bound as in \cite{DingLi18}, Theorem~\ref{2D result} states that the upper bound on the chemical distance holds with high probability given connectivity. At the moment, we can only show the with high probability result as in \eqref{2D chemical distance}; as noted in Remark~\ref{rem-ALP}, it is possible that with some expected future input, one would be able to derive a stronger version as in \eqref{eq-ALP}.
		
		\item The upper bound is sharpened from $Ne^{(\log N)^\alpha}$ to $N (\log N)^{1/4}$, which is somewhat surprising. In fact, the authors of the present article as well as a few people we talked to believed that the chemical distance should be at least $N \log N$.  
	\end{itemize}
	A major difference between our proof of Theorem~\ref{2D result} and the proof of the corresponding result in \cite{DingLi18} is that our proof does not rely on Makarov's theorem (on the dimension of the support of planar harmonic measures) which was a fundamental ingredient in \cite{DingLi18}. Instead of applying Makarov's theorem, we study the \emph{intrinsic structure} of the ``exploration martingale'' introduced in Section~\ref{sec:martingale}.
	
	Additionally, we remark that the result proved in \cite{DingLi18} applies to level-set percolation for Gaussian free fields on the integer lattice (as well as on the metric graph). Since percolation on the metric graph is dominated by the percolation on the integer lattice, our Theorem~\ref{2D result} implies that with non-vanishing probability the chemical distance in the level-set cluster on the integer lattice between the boundary of the annulus is $O(N (\log N)^{1/4})$.
	We feel it is possible that the methods we employ in proving Theorem~\ref{2D result} together with some technical work might be sufficient to show that the chemical distance on the integer lattice is $O(N (\log N)^{1/4})$ with high probability given connectivity. However, we prefer not to consider this problem here to avoid further complications.  Furthermore, we note that percolation clusters for level sets on the metric graph in two dimensions are of fundamental importance since their first passage sets converge to those of (continuous) Gaussian free fields \cite{ALS17, ALS18} (thus, we believe Theorem~\ref{2D result} is of substantial interest on its own). Finally, \cite{DingLi18} also established the chemical distance for critical random walk loop soup clusters. We chose not to consider proving an analogue of Theorem~\ref{2D result} for random walk clusters in the present paper.
	
	For $d\geq 3$, level-set percolation for Gaussian free fields (on metric graphs) has been studied in \cite{Lupu16}, which in particular computed  the connectivity probability between any two points and showed that the critical threshold is at $h = 0$. Our methods allow us to improve on those results by deriving more quantitative information on the phase transition, especially when $d=3$. In this case we can compute the connectivity exponent at criticality (we remark that the real contribution of the present paper is on its upper bound, since the lower bound can be deduced easily from \cite{Lupu16}), prove an almost exponential decay at subcriticality (it is quite possible that by employing a renormalization technique we can get rid of the $\log$ factor in the subcritical regime, but we chose not to consider that in the present paper), and provide an explicit description of the percolation probability in the supercritical regime (which can be rarely achieved in percolation models). Our results seem to describe the phase transition of the percolation model for the metric graph Gaussian free field in three dimensions in rather precise detail. This is somewhat interesting since percolation models in three dimensions are in general rather difficult. 
	
	That being said, we would like to mention that level-set percolation for Gaussian free fields on integer lattices for $d\geq 3$ has already been extensively studied (see \cite{MS83, BLM87, RS13, Rod14, DR15, PR15, Sznitman15, Sznitman16, Rod17, DPR17, Sznitman18}). Contrary to the case of two dimensions, percolation is substantially different on metric graphs and on integer lattices for $d\geq 3$, (roughly speaking) for the reason that there is a phase transition in higher dimensions but in two dimensions the percolation has the same qualitative behavior for any fixed $h$ (see Theorem~\ref{2D result}).  We remark that percolation on integer lattices is considerably more challenging than on metric graphs. In fact, despite intensive research, it remains an open question what the exact critical threshold is for $d\geq 3$ on integer lattices (but it was proved in \cite{DPR17} that the critical threshold is strictly positive), as well as whether a sharp phase transition exists. It would be difficult to apply methods in the present paper to prove something on the integer lattices for $d\geq 3$, for the reason that we do not have a precise control over the ``exploration martingale'' (as introduced in Section~\ref{sec:martingale}) in the case of integer lattices.  
	
	\subsection{Discussions on future directions}
	
	Our work suggests a number of interesting directions for further research,  which we list below.
	\begin{itemize}
		\item The factor of $N(\log N)^{1/4}$ in Theorem~\ref{2D result} reinstates the (now even more intriguing) question of whether the chemical distance is linear or not. Our bound of $N (\log N)^{1/4}$ strongly suggests that this is a highly delicate problem.
		\item Our method can give some non-trivial bounds on the exponent for chemical distances for $d \geq 3$ at criticality, but it seems challenging to compute the exact exponent.
		\item The difference of a factor of $\sqrt{\log N}$ between the upper and lower bounds of \eqref{Critical 3D result} hides important information about the geometry of critical clusters for $d=3$. For example, whether the capacity of the critical cluster containing $\0$ is of order $N$, conditioned on the cluster intersecting with $\partial V_N$. It would be interesting to prove an up-to-constants bound for the connectivity probability at criticality.
		\item It would be very interesting to construct an incipient infinite cluster measure for critical percolation in three dimensions, as has been done for Bernoulli percolation in two dimensions in \cite{Kesten86} (see also \cite{BS17} for a nicely streamlined presentation).
	\end{itemize}
	
	\subsection{Notation conventions and organization}
	
	For a real vector $\mathbf x$ (in any dimension), we denote by  $|\mathbf x|$  the Euclidean norm of $\mathbf x$, by $|\mathbf x|_{\ell_1}$ its $\ell_1$-norm, and by $|\mathbf x|_\infty$ its $\ell_\infty$-norm. We will also use $|A|$ to denote the cardinality of a finite set $A$. The meaning will be clear from context. We will denote by $\bar{A}$ and $A^o$ the closure and interior of a subset $A \subset \ctG$, respectively. We use $A^c$ to denote the complement of the set (or event) $A$.
	
	Throughout, we will use $\varphi$ and $\Phi$ to denote the density function and distribution function of the standard normal distribution, and $\bar{\Phi}$ to denote its survivor function. That is, $\bar{\Phi}(x) = 1 - \Phi(x)$.
	
	To simplify certain statements we use the following notation to describe the asymptotic behavior of functions of $N$ as $N$ tends to infinity. For two positive functions $f$ and $g$, we say $f(N) = O(g(N))$ as $N \to \infty$ if there exists constants $c > 0$ and $N_0 > 0$ (possibly depending on $d$, $h$, or other parameters) such that for all $N \geq N_0$, $f(N) \leq c g(N)$. We say $f(N) = o(g(N))$  if for every constant $c > 0$ there exists $N_0 > 0$ such that for all $N \geq N_0$, $f(N) \leq c g(N)$. Similarly, we say $f(N) = \Omega(g(N))$ if $g(N) = O(f(N))$, and $f(N) = \omega(g(N))$ if $g(N) = o(f(N))$. Finally, we say $f(N) = \Theta(g(N))$ if $f(N) = O(g(N))$ and $f(N) = \Omega(g(N))$.
	
	The rest of the paper is organized as follows. In Section \ref{sec:martingale} we introduce a family of martingales which is the key to proving all  the results in the present paper. In Section \ref{High d proofs} we prove the results concerning $d \geq 3$ (as the proof is substantially simpler than that for $d=2$), and in Section \ref{2D proofs} we prove Theorem \ref{2D result} concerning $d=2$ (we remark that the proof of Theorem~\ref{2D result} encapsulates all the technical ideas of the present article).

	\section{Exploration Martingale}\label{sec:martingale}
	In this section, we introduce the ``exploration martingale'' and demonstrate some of its basic properties. We note that the approach of applying martingales in the study of percolation for Gaussian free fields has appeared before (c.f. \cite{LW16, ALS17}).

	The discussion in this section applies to all dimensions, and we will denote by $\{\til{\phi}_v: v\in \ctG\}$ the Gaussian free field under consideration, without further specifying $\ctG$.
	For a finite subset $A \subset V$ (as usual, $\ctG$ is the metric graph associated to the vertex set $V$), we define the ``observable'' $X_A$ to be the average of $\til{\phi}$ on $A$
	\[
	X_A = \frac{1}{|A|} \sum_{v \in A} \til{\phi}_v.
	\]
	Let $\cI_0$ be a deterministic, closed, bounded, connected subset of $\ctG$ and let $\cI_t = \{v \,:\, D_{h}(\cI_0,v) \leq t\}$ be the closed ball of radius $t > 0$ around $\cI_0$ with respect to $D_h$, the graph distance on $\til{E}^{\geq h}$ (here we use the following convention: if $u$ and $v$ are distinct and $u \notin \til{E}^{\geq h}$ we let $D_h(u,v) = D_h(v,u) = \infty$, but for any $u \in \ctG$ we set $D_h(u,u) = 0$ even if $u \notin \til{E}^{\geq h}$). For $U \subset \ctG$, let $\mc{F}_U$ be the $\sigma$-field generated by $\{\til{\phi}_v \,:\, v \in U\}$, and $$\mc{F}_{\cI_t} = \{\mathcal E \in \mc{F}_{\tilde {\mathcal G}}: \mathcal E \cap \{\mathcal I_t \subset U\} \in \mathcal F_U \mbox{ for all open }  U\supset \mathcal I_0\} \,.$$ We then define the continuous-time martingale $M_A$ by
	\begin{equation}\label{eq-def-M}
	M_{A,t} = \E[X_A \mid \mc{F}_{\cI_t}]\,,
	\end{equation}
	We will call $M_A$ the exploration martingale with source $\cI_0$ and target $A$. Before proceeding further, we show the following measurability property of $\cI_t$.
	\begin{proposition}\label{Measurability proposition}
		For any open subset $U$ of $\til{\mc{G}}$ containing $\cI_0$ and any $t\geq 0$, we have $\{\cI_t \subset U\} \in \mc{F}_U$.
	\end{proposition}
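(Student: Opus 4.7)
The plan is to establish the equivalence
\[
\{\mathcal{I}_t \not\subset U\} \;=\; \{D_h^{\bar U}(\mathcal{I}_0,\partial U) \leq t\},
\]
where $D_h^{\bar U}$ denotes the intrinsic graph distance inside $\tilde{E}^{\geq h} \cap \bar U$ (with the convention $\inf\emptyset = +\infty$), and then verify that the right-hand event lies in $\mathcal{F}_U$. Once the equivalence is in hand, the measurability step is almost automatic because $\tilde\phi$ is continuous, so restricting to $\bar U$ adds no information over $U$.

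For the equivalence, suppose $\mathcal{I}_t \not\subset U$ and pick $v \in \mathcal{I}_t \setminus U$ together with a continuous path $\gamma:[0,s]\to \tilde{E}^{\geq h}$ of length $s \leq t$ with $\gamma(0) \in \mathcal{I}_0 \subset U$ and $\gamma(s) = v \notin U$. Let $s^* = \inf\{r\in[0,s] : \gamma(r) \notin U\}$. Openness of $U$ and continuity of $\gamma$ force $\gamma(s^*) \in \bar U \setminus U = \partial U$, and closedness of $\tilde{E}^{\geq h}$ (a consequence of continuity of $\tilde\phi$) gives $\gamma(s^*)\in \tilde{E}^{\geq h}$. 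Thus $\gamma|_{[0,s^*]}$ is a path in $\tilde{E}^{\geq h}\cap \bar U$ of length $s^* \leq t$ from $\mathcal{I}_0$ to $\partial U$, so $D_h^{\bar U}(\mathcal{I}_0,\partial U) \leq t$. Conversely, any path realizing $D_h^{\bar U}(\mathcal{I}_0,\partial U) \leq t$ ends at a point of $\partial U \cap \tilde{E}^{\geq h}$, which lies in $\mathcal{I}_t$ by definition and not in $U$ since $U$ is open, witnessing $\mathcal{I}_t \not\subset U$.

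For the measurability, the random closed set $\tilde{E}^{\geq h}\cap \bar U = \{v\in \bar U : \tilde\phi_v \geq h\}$ is $\mathcal{F}_{\bar U}$-measurable by continuity of $\tilde\phi$, and the intrinsic distance $D_h^{\bar U}(\mathcal{I}_0,\partial U)$ is then a standard measurable functional of this set: it can be expressed as the infimum over a countable collection of path-existence events, parameterized by finite sequences of rational waypoints on edges of the metric graph (each edge being one-dimensional, and the set of edges countable), and each such event depends on $\tilde\phi$ restricted to a compact subset of $\bar U$ in a continuous way. Finally $\mathcal{F}_{\bar U} = \mathcal{F}_U$: continuity of $\tilde\phi$ on $\tilde{\mathcal{G}}$ forces its values on $\partial U$ to coincide with limits along any countable dense sequence in $U$, so $\tilde\phi|_{\partial U}$ is $\mathcal{F}_U$-measurable. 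Combined with the equivalence, this yields $\{\mathcal{I}_t \subset U\} \in \mathcal{F}_U$.

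The only real obstacle is formalizing the measurability of the intrinsic distance on a random closed subset of $\tilde{\mathcal{G}}$ — the level set can have intricate geometry near zero-crossings of $\tilde\phi - h$ — but the edge-by-edge one-dimensional structure of the metric graph reduces this to routine countable-combinatorial bookkeeping, so I expect no essential difficulty.
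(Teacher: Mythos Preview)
Your approach is essentially the same as the paper's: both localize the event to $\bar U$ by introducing the intrinsic distance $D_h^{\bar U}$ on $\tilde E^{\geq h}\cap\bar U$ (the paper writes this as $D_{U_h}$ and packages the local event as $\{\mathcal I_{U,t}\subset U\}$, where $\mathcal I_{U,t}$ is the $D_h^{\bar U}$-ball of radius $t$). Your first-exit argument for the forward direction is in fact a bit cleaner than the paper's supremum argument for the reverse inclusion.

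One small point: in your converse direction you write ``any path realizing $D_h^{\bar U}(\mathcal I_0,\partial U)\leq t$,'' which tacitly assumes the infimum is attained. The paper sidesteps this by phrasing the local event as $\{\mathcal I_{U,t}\subset U\}$ rather than $\{D_h^{\bar U}(\mathcal I_0,\partial U)>t\}$; with that formulation the converse is immediate from $\mathcal I_{U,t}\subseteq\mathcal I_t$. Your version is easily repaired (compactness of the relevant portion of $\partial U$ together with lower semicontinuity of $D_h$), but it is worth noting that the paper's packaging avoids the issue entirely. Otherwise the two proofs coincide.
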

	\begin{proof}
		Since $\cI_0$ is deterministic, we assume without loss of generality that $t > 0$. Write $U_h = \bar U \cap \til{E}^{\geq h}$, let $D_{U_{h}}$ be the graph distance on $U_h$, and $\cI_{U,t} = \{u \in \bar{U} \,:\, D_{U_h}(\cI_0, v) \leq t\}$ be the closed ball of radius $t$ around $\cI_0$ with respect to $D_{U_{h}}$. We will show that $\{\cI_t \subset U\} = \{ \cI_{U,t} \subset U\} \in \mc{F}_U$. First, it is clear that for any $u,v \in \ctG$, $D_h(u,v) \leq D_{U_h}(u,v)$ so we have $\cI_{U,t} \subseteq \cI_t$ and $\{\cI_t \subset U\} \subseteq \{\cI_{U,t}\subset U\}$. Now, assume $\cI_{U,t} \subset U$ and that there exists $v \in \cI_t$ with $v \notin U$. Then there exists $0< s \leq t$ and a path $\Gamma : [0,s] \to \ctG$ parametrized by $D_{h}$ with $\Gamma(0) = u_0 \in \partial \cI_0$, $\Gamma(s) = v$, and $\Gamma([0,s]) \subseteq \til{E}^{\geq h}$. Setting $s' = \sup\{x \in [0,s] \,:\, \Gamma(x) \in \cI_{U,t}\}$ we have $\Gamma(s') \in \cI_{U,t} \subset U$ and thus $s' < s$. Hence,  there exists an open neighborhood $(a,b) $ of $s'$ such that $\Gamma((a,b)) \subset U$. By assumption, $\Gamma((a,b)) \subset \til{E}^{\geq h}$, so we get $\Gamma((a,b)) \subset U_h$ and thus $\Gamma((a,b)) \subset \cI_{U,t}$, which contradicts the maximality of $s'$. This concludes the proof that $\{\cI_t \subset U\} = \{\cI_{U,t} \subset U\}$ and thus the proposition follows.
	\end{proof}
	Proposition \ref{Measurability proposition}, together with the following strong Markov property of the Gaussian free field (see \cite[Theorem 4 in Chapter 2, Section 2.4] {Rozanov82} for a proof) will provide useful formulas for $M_{A}$ and its quadratic variation $\langle M_A \rangle$.
	\begin{theorem}\label{Gibbs-Markov}
		Let $\cK$ be a random compact connected subset of $\ctG$ such that for every deterministic open subset $U$ of $\ctG$, the event $\{\cK \subseteq U\} \in \mc{F}_U$. Then conditioned on $\cK$ and $\mc{F}_{\cK}$, $\{\til{\phi}_v \,:\, v \in \ctG \setminus \cK\}$ is equal in distribution to $\{\E[\til{\phi}_{v} \mid \mc{F}_\cK] + \til{\psi}_{v}\,:\, v \in \ctG\setminus \cK\}$ where $\til{\psi}$ is a Gaussian free field (with Dirichlet boundary condition) on $\ctG \setminus \cK$. Additionally, for $v \in \ctG \setminus \cK$ and $T = \inf \{ t \geq 0 \,:\, \tB_t \in \cK \}$
		\begin{equation}\label{GFF conditional mean}
		\E[\til{\phi}_v \mid \mc{F}_{\cK}] = \E_{v} [ \til{\phi}_{\tB_T} \one_{T < \zeta}\mid \mc{F}_{\cK}] = \sum_{u \in \partial \cK} \Hm(v,u; \cK) \til{\phi}_u.
		\end{equation}
		Here the harmonic measure $\Hm$ is given by $\Hm(v,u;\cK) = \PP_v(T < \zeta,\, \tB_T = u)$.
	\end{theorem}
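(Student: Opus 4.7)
The plan is to first establish the decomposition for a deterministic compact connected $K \subset \ctG$ (the classical Gibbs--Markov property for the GFF on a metric graph), and then bootstrap to random $\cK$ via a stopping-set approximation that is the set-valued analog of the strong Markov property for Markov processes.

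\textbf{Deterministic step.} Fix a compact connected $K \subset \ctG$ and let $T$ be the hitting time of $K$ by the Brownian motion $\tB$. The strong Markov property of $\tB$ at $T$ gives the Green-function identity
\begin{equation*}
G(u, v) \;=\; G^{\ctG \setminus K}(u, v) \;+\; \sum_{w \in \partial K} \Hm(u, w; K)\, G(w, v), \qquad u, v \in \ctG \setminus K,
\end{equation*}
where $G^{\ctG \setminus K}$ is the Green function of $\tB$ killed on $\partial K$ (and on $\partial V$ when $d=2$). Since $\til{\phi}$ is a centered Gaussian field with covariance $G$, this decomposition of covariances translates into the orthogonal splitting $\til{\phi}_v = \sum_{u \in \partial K} \Hm(v, u; K) \til{\phi}_u + \til{\psi}_v$ for $v \in \ctG \setminus K$, where $\til{\psi}$ has covariance $G^{\ctG \setminus K}$, i.e.\ is a Dirichlet GFF on $\ctG \setminus K$, and is independent of $\mc{F}_K$ because uncorrelated jointly Gaussian vectors are independent. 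Continuity of $\til{\phi}$ on the metric graph makes $\til{\phi}_u$ for $u \in \partial K$ well-defined pointwise.

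\textbf{Random step.} Approximate $\cK$ from above by discretizations $\cK_n$ taking values in a countable collection $\{K_i^{(n)}\}_i$ of compact connected ``building blocks'' of diameter $O(1/n)$, chosen so that every admissible $\cK$ has a minimal super-set in the collection. The hypothesis $\{\cK \subseteq U\} \in \mc{F}_U$ for every open $U$ — applied to open super-sets $U_i \downarrow K_i^{(n)}$ — forces $\{\cK_n = K_i^{(n)}\} \in \mc{F}_{K_i^{(n)}}$, exactly the set-valued analog of the event $\{\tau = t_i\}$ being $\mc{F}_{t_i}$-measurable for a discrete stopping time. On each atom the deterministic step applies, so for any bounded continuous functional $F$ depending only on field values outside $\cK_n$,
\begin{equation*}
\E\bigl[F(\til{\phi}) \,\bigl|\, \mc{F}_{\cK_n}\bigr] \;=\; \sum_i \one_{\cK_n = K_i^{(n)}}\, \E\bigl[F\bigl(h^{K_i^{(n)}}(\til{\phi}) + \til{\psi}^{(i,n)}\bigr)\bigr],
\end{equation*}
where $h^{K}$ denotes the harmonic extension and $\til{\psi}^{(i,n)}$ is an independent Dirichlet GFF on the complement. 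Letting $n \to \infty$ uses Hausdorff convergence $\cK_n \downarrow \cK$ together with continuity of $\til{\phi}$ and of Brownian hitting distributions in the target set to identify the limit with the claimed decomposition conditional on $\mc{F}_{\cK}$. The explicit formula in \eqref{GFF conditional mean} is then immediate: on $\{\cK = K\}$ the deterministic step identifies $\E[\til{\phi}_v \mid \mc{F}_\cK]$ with the harmonic extension $\sum_{u \in \partial K} \Hm(v, u; K)\,\til{\phi}_u$, and the probabilistic rewriting follows from $\Hm(v, u; K) = \PP_v(\tB_T = u,\, T < \zeta)$.

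\textbf{Main obstacle.} The substantive difficulty lies in the random step: one must construct a discretization compatible with the measurability hypothesis (Proposition~\ref{Measurability proposition} is the template for this), and then control the limit in a setting where the conditioning $\sigma$-algebra $\mc{F}_{\cK_n}$ itself varies with $n$. The pointwise continuous version of the field on the metric graph is essential both for making $\til{\phi}$ on the random boundary $\partial \cK$ well-defined and for the Hausdorff-stability of harmonic measure; without it — in the purely discrete lattice setting — the Gibbs--Markov decomposition cannot take the clean form stated here, which is precisely the feature that makes the exploration martingale \eqref{eq-def-M} tractable in what follows.
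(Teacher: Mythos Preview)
The paper does not actually prove this theorem: it is stated with the parenthetical ``see \cite[Theorem 4 in Chapter 2, Section 2.4]{Rozanov82} for a proof'' and no argument is given. So there is no paper-proof to compare against; your outline is a self-contained sketch of the standard route (deterministic Markov property via the Green-function splitting, then a stopping-set approximation for the random case), which is presumably what Rozanov does.

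Your deterministic step is correct and standard. In the random step the architecture is right, but the sketch glosses over the two genuinely technical points: (i) constructing discretizations $\cK_n \downarrow \cK$ that are simultaneously compact, connected, take only countably many values, and satisfy $\{\cK_n = K_i^{(n)}\} \in \mc{F}_{K_i^{(n)}}$ --- on a metric graph this is doable (e.g.\ by taking $\cK_n$ to be the union of closed edge-segments of a dyadic mesh that meet $\cK$), but ``building blocks of diameter $O(1/n)$'' is not quite the right description since $\cK$ itself need not have small diameter; and (ii) passing to the limit when the conditioning $\sigma$-algebra varies, which requires a backward-martingale or monotone-class argument together with the continuity of $v \mapsto \Hm(v,\cdot;K)$ in $K$ that you invoke. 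None of this is wrong, but a referee would ask you to spell out (i) more carefully before accepting the proof.
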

	Note that if  $v \in \cK$, the harmonic measure is a point mass of mass 1 at $v$. To avoid having to account for this case separately, we will sometimes let the sum in \eqref{GFF conditional mean} range over all $u \in \cK$. The summation notation is justified since in either case the harmonic measure is supported on a finite number of points. To simplify notation, we will write $\Hm_t(v,u)$ for $\Hm(v,u;\cI_t)$ and $\Hm_t(A,u)$ for $|A|^{-1} \sum_{v \in A} \Hm_t(v,u)$. With this notation, \eqref{GFF conditional mean} gives
	\begin{equation}\label{Martingale value}
	M_{A,t} = \sum_{u \in \cI_t} \Hm_t(A,u) \til{\phi}_u.
	\end{equation}
	Sometimes, it will be useful to consider the total mass of the harmonic measure. For a set $B \subset \til{\mc{G}}$, we let $\Hm(v,B) = \sum_{w \in B} \Hm(v,w;B)$, and $\Hm(A,B) = |A|^{-1}\sum_{v \in A} \Hm(v,B)$. To simplify notation, we will write $\pi_{A,t}$ for $\Hm(A, \mc I_t)$. For a set $U$, we let $\tau_U = \inf\{t \geq 0 \,:\, \til B_t \in U\}$ be the hitting time of $U$ by $\til B$.
	
	The following lemma establishes the continuity of $M_A$ and will allow us to compute its quadratic variation. We defer the proof to the end of this section.
	\begin{lemma}\label{Continuity lemma}
		$M_{A,t}$ and $\var[X_A \mid \mc{F}_{\cI_t}]$ are almost-surely continuous as functions of $t$.
	\end{lemma}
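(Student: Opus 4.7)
The plan is to reduce the continuity of both $M_{A,t}$ and $\var[X_A \mid \mc{F}_{\cI_t}]$ to continuity of the set-valued map $t \mapsto \cI_t$ (in Hausdorff distance), combined with the almost-sure continuity of $\til{\phi}$ on $\ctG$. My first step is to analyze the structure of the exploration. Since $\ctG$ is locally one-dimensional, at any time $t$ the active part of the frontier consists of a finite collection of points, each lying in the interior of some edge and moving outward at unit speed as $t$ increases. A frontier point experiences only two kinds of events: either it hits a lattice vertex $v$ (and then potentially splits into new frontiers on those outgoing edges where $\til{\phi} > h$ in a neighborhood of $v$), or it reaches a point $p$ where $\til{\phi}_p = h$ and becomes a stationary ``dead'' boundary point. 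Because $\til{\phi}$ is almost surely continuous on $\ctG$ and the set $\{\til{\phi} = h\}$ is almost surely discrete along each edge, these events occur at isolated times in any bounded time window, and $\cI_t$ evolves without jumps.

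Given continuity of $\cI_t$, the next step is to establish continuity of $M_{A,t}$ via \eqref{Martingale value},
\[
M_{A,t} = \sum_{u} \Hm_t(A,u)\, \til{\phi}_u,
\]
where the (finite) sum is over the support of the harmonic measure from $A$ to $\cI_t$. Between event times, each relevant $u$ moves continuously along its edge, $\til{\phi}_u$ varies continuously by continuity of the field, and the weights $\Hm_t(A, u)$ depend smoothly on the frontier positions through explicit one-dimensional hitting-probability formulas for $\til{B}$ on the edges of the metric graph. At an event where a frontier reaches a vertex $v$ at time $t_0$, continuity of $M_{A,t}$ follows from the fact that the harmonic mass accumulated at $v$ just before the bifurcation equals the total mass on the emerging frontiers just after: the new outgoing frontiers start at zero edge-length from $v$, and the mass at $v$ is split among them according to the exit distribution on the star graph at $v$, all paired against the continuous value $\til{\phi}_v$.

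For the conditional variance, I would apply Theorem~\ref{Gibbs-Markov} to conclude that conditionally on $\mc{F}_{\cI_t}$ the field on $\ctG \setminus \cI_t$ equals its harmonic extension plus an independent GFF with Dirichlet boundary conditions on $\ctG \setminus \cI_t$, yielding
\[
\var[X_A \mid \mc{F}_{\cI_t}] = \frac{1}{|A|^2} \sum_{u, v \in A \setminus \cI_t} G^{\ctG \setminus \cI_t}(u, v),
\]
where $G^{\ctG \setminus \cI_t}$ is the Green's function of $\til{B}$ killed on $\cI_t$ (and on $\partial V_N$ in the two-dimensional setting). Continuity in $t$ then reduces to continuity of this Green's function in the obstacle set $\cI_t$, which follows from the continuous evolution of $\cI_t$ together with standard domain-monotonicity estimates. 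In particular, when a vertex $v \in A$ is first absorbed into $\cI_{t_0}$, the entries $G^{\ctG \setminus \cI_t}(v, \cdot)$ tend continuously to zero as $t \uparrow t_0$, since $v$ becomes a boundary point where $\til{B}$ is killed immediately; this matches the corresponding drop in the variance contribution from $v$.

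I expect the main obstacle to be the precise bookkeeping at the bifurcation events, namely verifying that both the harmonic measure (for $M_{A,t}$) and the Dirichlet Green's function (for the conditional variance) transition continuously when a frontier reaches a vertex of $\cG$ and spawns several new frontiers. This reduces to a local computation on a small star-graph neighborhood of the vertex, where all relevant quantities are exit probabilities and expected occupation times for a one-dimensional Brownian motion and hence can be made fully explicit; once this is done, the remaining continuity statements away from event times follow by routine smoothness of the hitting distributions along individual edges.
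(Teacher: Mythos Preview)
Your approach is workable and shares the same core idea as the paper's, namely that both quantities are expressible via harmonic measure on $\partial\cI_t$ and that this harmonic measure varies continuously in $t$. However, the paper's argument is more streamlined in one respect: rather than treating $M_{A,t}$ and $\var[X_A\mid\mc{F}_{\cI_t}]$ separately, it observes (via \eqref{GFF conditional mean} and \eqref{eq-G-t}) that both reduce to continuity of
\[
F(t)=\sum_{u\in\partial\cI_t}\Hm_t(v,u)\,f(u)
\]
for an arbitrary continuous function $f$ on $\ctG$ (taking $f=\til\phi$ for the conditional mean and $f=G(\cdot,v')$ for the conditional covariance). It then gives a direct $\epsilon$--$\delta$ argument for continuity of $F$, using only that $\partial\cI_t$ is finite and that for nearby $s$ each point of $\partial\cI_t$ corresponds to a small cluster of points of $\partial\cI_s$, together with a single explicit hitting-probability bound on the star at a vertex. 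This avoids entirely your piecewise analysis into ``between events'' and ``at events'', and in particular sidesteps the vertex-bifurcation bookkeeping you anticipate as the main obstacle.

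One point in your sketch is incorrect as stated and should be repaired: the set $\{\til\phi=h\}$ is \emph{not} almost surely discrete along an edge. On each edge the field is (conditionally) a Brownian bridge, whose level sets are almost surely perfect (uncountable, with no isolated points) whenever they are nonempty. What you actually need, and what is true, is that each outward-moving frontier point dies at a single well-defined time, namely when it first reaches the level set; equivalently, the \emph{first} crossing of level $h$ from each side of an edge is a single point. This suffices for your event-time decomposition, so the error is not fatal, but the justification you give for isolated events should be replaced.
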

	Recall that for a continuous martingale $M$ its quadratic variation $\langle M \rangle$ is the unique increasing continuous process vanishing at zero such that $M^2 - \langle M \rangle$ is a martingale (see \cite[Theorem 1.3 in Chapter IV]{RevusYor99}). From this we deduce the following.
	\begin{corollary}\label{QV form}
		The quadratic variation of $M_A$ is given by
		\[
		\langle M_A \rangle_t = \var[X_A \mid \mc{F}_{\cI_0}] - \var[X_A \mid \mc{F}_{\cI_t}].
		\]
	\end{corollary}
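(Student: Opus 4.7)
The plan is to exhibit $A_t := \var[X_A \mid \mc{F}_{\cI_0}] - \var[X_A \mid \mc{F}_{\cI_t}]$ as a candidate for $\langle M_A \rangle_t$ and invoke the Revuz--Yor uniqueness already cited: $\langle M_A \rangle_t$ is the unique continuous, non-decreasing, zero-starting process such that $M_{A,t}^2 - \langle M_A\rangle_t$ is a martingale. I would verify each of these four properties in turn.

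For the martingale property, I would first note that $(\mc{F}_{\cI_t})_{t \geq 0}$ is a genuine filtration: if $s \leq t$ then $\cI_s \subseteq \cI_t$, so $\{\cI_t \subset U\} \subseteq \{\cI_s \subset U\}$ and by Proposition \ref{Measurability proposition} the defining measurability condition for $\mc{F}_{\cI_t}$ factors through the one for $\mc{F}_{\cI_s}$. The tower property then immediately yields that $\E[X_A^2 \mid \mc{F}_{\cI_t}]$ is a martingale. Using the pointwise identity $\E[X_A^2 \mid \mc{F}_{\cI_t}] = M_{A,t}^2 + \var[X_A \mid \mc{F}_{\cI_t}]$, I would rearrange to
\[
M_{A,t}^2 - A_t = \E[X_A^2 \mid \mc{F}_{\cI_t}] - \var[X_A \mid \mc{F}_{\cI_0}],
\]
whose right-hand side is the difference of a martingale and an $\mc{F}_{\cI_0}$-measurable random variable that, being $t$-independent, is trivially a martingale. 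Continuity of $A_t$ and $A_0 = 0$ are immediate from Lemma \ref{Continuity lemma} and the definition of $A_t$.

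What remains is to show that $A_t$ is non-decreasing, equivalently that $t \mapsto \var[X_A \mid \mc{F}_{\cI_t}]$ is non-increasing. Here the key input is the strong Markov property (Theorem \ref{Gibbs-Markov}) applied with $\cK = \cI_t$, whose random-set measurability hypothesis is exactly Proposition \ref{Measurability proposition}. Conditionally on $\mc{F}_{\cI_t}$, the field on $\ctG \setminus \cI_t$ decomposes as an $\mc{F}_{\cI_t}$-measurable harmonic extension plus an independent Dirichlet GFF on $\ctG \setminus \cI_t$. The harmonic part contributes nothing to the conditional variance of $X_A$, so
\[
\var[X_A \mid \mc{F}_{\cI_t}] = \frac{1}{|A|^2} \sum_{u,v \in A} G_{\ctG \setminus \cI_t}(u,v),
\]
where $G_U$ is the Green function on $U$ of the metric-graph Brownian motion (with the convention $G_U(\cdot,\cdot) = 0$ outside $U$, consistent with Dirichlet boundary values). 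Since $\cI_s \subseteq \cI_t$ for $s \leq t$ gives $\ctG \setminus \cI_s \supseteq \ctG \setminus \cI_t$, pointwise domain monotonicity of the Green function delivers the desired monotonicity of $A_t$.

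The main obstacle I anticipate is not conceptual but rather the measurability bookkeeping around applying Theorem \ref{Gibbs-Markov} to the random set $\cI_t$ and deriving the Green-function formula for the conditional variance in a form that makes the domain-monotonicity step transparent; Proposition \ref{Measurability proposition} is the piece that unlocks this. Once the four properties (martingale, continuity, $A_0 = 0$, monotonicity) are established, Revuz--Yor uniqueness identifies $A_t$ with $\langle M_A \rangle_t$, proving the corollary.
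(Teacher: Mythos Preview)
Your proposal is correct and follows essentially the same route as the paper: verify that $A_t$ is adapted, continuous, vanishes at zero, is non-decreasing, and that $M_{A,t}^2 - A_t$ is a martingale, then invoke the uniqueness characterization of quadratic variation. The only cosmetic differences are that (i) for the martingale property you observe directly that $\E[X_A^2 \mid \mc{F}_{\cI_t}]$ is a Doob martingale and rearrange, whereas the paper computes $\E[M_{A,t}^2 \mid \mc{F}_{\cI_s}]$ and $\E[\var[X_A \mid \mc{F}_{\cI_t}] \mid \mc{F}_{\cI_s}]$ by hand and cancels the cross term, and (ii) for monotonicity you spell out the Green-function formula and use domain monotonicity, whereas the paper simply asserts that $\var[X_A \mid \mc{F}_{\cI_t}]$ decreases because $\cI_t$ increases (which is immediate for Gaussian conditional variances).
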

	\begin{proof}
		By definition the process $\var[X_A \mid \mc{F}_{\cI_0}] - \var[X_A \mid \mc{F}_{\cI_t}]$ is adapted to $\mc{F}_{\cI_t}$ and vanishes at zero. It is increasing because $\cI_t$ is increasing (so $\var[X_A \mid \mc{F}_{\cI_t}]$ is decreasing), and it is continuous by Lemma~\ref{Continuity lemma}. Therefore by the characterization of $\langle M_A \rangle$ we only need to show that the process $\{Y_t\,:\, t \geq 0\}$ is a martingale, where
		\[
		Y_t = M_{A, t}^2 - \var[X_A \mid \mc{F}_{\cI_0}] + \var[X_A \mid \mc{F}_{\cI_t}].
		\]
		To this end, note that for any times $0 \leq s < t$,
		\begin{align*}
		\E[M_{A,t}^2 \mid \mc{F}_{\cI_s}] &= M_{A,s}^2 + \E[(M_{A,t} - M_{A,s})^2 \mid \mc{F}_{\cI_s}]\,, \\
		\E[\var[X_A \mid \mc{F}_{\cI_t}] \mid \mc{F}_{\cI_s}] &= \E[(X_A - M_{A,t})^2 \mid \mc{F}_{\cI_s}]\\
		&= \var[X_A \mid \mc{F}_{\cI_s}] - \E[(M_{A,t} - M_{A,s})^2 \mid \mc{F}_{\cI_s}]. 
		\end{align*}
		We can then calculate $\E[ Y_t \mid \mc{F}_{\cI_s}]$ as follows
		\begin{align*}
		\E[Y_t \mid \mc{F}_{\cI_s}] &= \E[M_{A,t}^2 \mid \mc{F}_{\cI_s}] + \E[\var[X_A \mid \mc{F}_{\cI_t}] \mid \mc{F}_{\cI_s}] -  \var[X_A \mid \mc{F}_{\cI_0}] \\
		&= M_{A,s}^2 + \var[X_A \mid \mc{F}_{\cI_s}] -\var[X_A \mid \mc{F}_{\cI_0}] \\
		&= Y_s\,,
		\end{align*}
		completing the verification that $\{Y_t\,:\, t \geq 0\}$ is a martingale.
	\end{proof}
Now, we present some formulas which will be used below to compute quadratic variations. Let $G_t$ be the Green's function on $\ctG \setminus \cI_t$. We get from Theorem~\ref{Gibbs-Markov} that
	\begin{align}
	\var[X_A \mid \mc{F}_{\cI_t}] &= \frac{1}{|A|^2} \sum_{v, v' \in A} G_t(v,v'). \label{conditional variance}
	\end{align}
	We also note that $G_t$ may be written in terms of $G$ and $\Hm_t$ as follows
	\begin{equation}\label{eq-G-t}
	G_t(v,v') = G(v,v') - \sum_{w \in \mc{I}_t}\Hm_t(v,w)G(w,v').
	\end{equation}
	Additionally, for $0 \leq s \leq t$ and $v,v' \in A$ we have the following two expressions for $G_s(v,v') - G_t(v,v')$.
	\begin{align*}
	G_s(v,v') - G_t(v,v') &= \sum_{w \in \mc I_t} \Hm_t(v,w)G_s(w,v') \\
	&= \sum_{w \in \mc I_t} \Hm_t(v,w)G(w,v') - \sum_{w' \in \mc I_s} \Hm_s(v,w')G(w',v').
	\end{align*}
	
	Next, we recall that the quadratic variation relates $M$ to Brownian motion. In particular \cite[Theorem 1.7 in Chapter V]{RevusYor99}, stated below, gives the appropriate extension of the Dubins-Schwarz theorem for martingales of bounded quadratic variation.
	\begin{theorem}\label{Time-change theorem}
		Let be $M$ a continuous martingale, $T_t = \inf\{s \,:\, \langle M \rangle_s > t\}$, and $W$ be the following process
		\[
		W_t = \begin{cases}
		M_{T_t} - M_0 & t < \langle M \rangle_\infty\, ,\\
		M_{\infty} - M_0 & t \geq \langle M \rangle_\infty.
		\end{cases}
		\]
		Then $W$ is a Brownian motion stopped at $\langle M \rangle_\infty$.
	\end{theorem}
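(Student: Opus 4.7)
The plan is to follow the classical Dubins--Schwarz argument, adapted to allow $\langle M\rangle_\infty$ to be finite with positive probability. The main ingredients are (i) time-change properties for continuous martingales, (ii) the fact that a continuous martingale is constant on every interval on which its quadratic variation is constant, and (iii) L\'evy's characterization of Brownian motion as the unique continuous martingale with quadratic variation $t$.

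First I would verify that $T_t = \inf\{s : \langle M\rangle_s > t\}$ is a stopping time for each $t$ with respect to the (augmented, right-continuous) filtration of $M$, that $t \mapsto T_t$ is right-continuous and nondecreasing, and that $\langle M\rangle_{T_t} = t$ on $\{t < \langle M\rangle_\infty\}$. Writing $\mc{G}_t = \mc{F}_{T_t}$, the process $W_t = M_{T_t} - M_0$ on $\{t < \langle M\rangle_\infty\}$ is then a continuous $(\mc{G}_t)$-martingale: continuity on intervals where $T$ is itself continuous is inherited from $M$, while at jumps of $T$, which correspond to flat stretches of $\langle M\rangle$, the standard fact that a continuous martingale is constant on such stretches keeps $W$ continuous. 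The martingale property follows from optional stopping applied to $M$ stopped at bounded truncations $T_t \wedge n$, combined with a uniform-integrability argument based on $\E[M_{T_t \wedge n}^2] = \E[M_0^2] + \E[\langle M\rangle_{T_t \wedge n}] \leq \E[M_0^2] + t$.

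Next, on the event $\{\langle M\rangle_\infty < \infty\}$, Doob's $L^2$-inequality (after localizing at $T_n$) shows that $M_t$ converges almost surely and in $L^2$ to a limit $M_\infty$ as $t \to \infty$, so extending $W$ by $M_\infty - M_0$ for $t \geq \langle M\rangle_\infty$ produces a continuous process. A direct computation then gives $\langle W\rangle_t = \langle M\rangle_{T_t} \wedge t = t \wedge \langle M\rangle_\infty$. To identify $W$ as a Brownian motion stopped at the $(\mc{G}_t)$-stopping time $\langle M\rangle_\infty$, I would enlarge the probability space (if necessary) to carry an independent standard Brownian motion $\beta$ and define $\hat W_t = W_t$ for $t \leq \langle M\rangle_\infty$ and $\hat W_t = W_{\langle M\rangle_\infty} + (\beta_t - \beta_{\langle M\rangle_\infty})$ for $t > \langle M\rangle_\infty$. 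A direct check yields that $\hat W$ is a continuous $L^2$-martingale with $\langle \hat W\rangle_t = t$, so L\'evy's characterization gives that $\hat W$ is a standard Brownian motion, and $W$ is its stopping at the $(\mc{G}_t)$-stopping time $\langle M\rangle_\infty$.

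The delicate point I expect to be the main obstacle is the continuity of $W$ across the random boundary $\langle M\rangle_\infty$: one must align the almost-sure limit $M_\infty$ with the left-limit of $W_s$ as $s \uparrow \langle M\rangle_\infty$ on the event that the latter is finite. This requires a careful truncation at levels $\langle M\rangle_\infty \wedge n$, checking convergence of $M_{T_{t \wedge n}}$ as $t \to \infty$ by the $L^2$-bound above, and a monotone passage $n \to \infty$. A secondary subtlety is justifying optional stopping without extra integrability assumptions on $M$, which is handled by the localizing sequence $\{T_n\}$. Once these points are secured, L\'evy's characterization closes the argument in a routine manner.
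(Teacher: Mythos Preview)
Your sketch is a faithful outline of the classical Dubins--Schwarz argument and is essentially correct. However, there is nothing to compare against: the paper does not prove this theorem at all. It is stated as a known result and attributed to \cite[Theorem 1.7 in Chapter V]{RevusYor99}; the authors simply recall it in order to time-change their exploration martingale into a stopped Brownian motion. So your proposal is not an alternative to the paper's proof but rather a reconstruction of the textbook proof the paper cites.
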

	When applying this theorem, we will generally denote by $B$ a Brownian motion which satisfies $B_t = M_{T_t} - M_0$ for $t < \langle M \rangle_\infty$ but is not stopped at $\langle M \rangle_\infty$, so that $W_t = B_{t \wedge \langle M \rangle_\infty}$. We note that by Theorem~\ref{Gibbs-Markov}, $\{M_{A,t}-M_{A,0} \,:\, t \geq 0\}$ is independent of $\mc{F}_{\mc I_0}$, so we will generally take $B$ independent of $\mc{F}_{\mc I_0}$ as well.
	Finally, we prove that $M_{A,t}$ and $\var[X_A \mid \mc{F}_{\cI_t}]$ are indeed continuous.
	\begin{proof}[Proof of Lemma \ref{Continuity lemma}]
		It suffices to show that for any $v,v' \in V$, $\E[\til{\phi}_v \mid \mc{F}_{\cI_t}]$ and $\cov[\til{\phi}_v \til{\phi}_v' \mid \mc{F}_{\cI_t}]$ are continuous (then it is clear that $M_{A,t}$ and $\var[X_A \mid \mc{F}_{\cI_t}]$ are averages of a finite number of continuous functions and are thus continuous). Since both functions are constant for $t \geq D_h(\cI_0, v)$, we let $0 \leq t \leq D_h(\cI_0, v)$. By \eqref{GFF conditional mean} and \eqref{eq-G-t} it suffices to show that for any continuous function $f$ on $\ctG$, the following function is continuous
		\[
		F(t) = \sum_{u \in \partial \cI_t} \Hm_t(v,u) f(u).
		\]
		Let $D_{\ell_1}$ denote the graph distance on $\ctG$ (i.e., $\ell_1$ distance on $\mbb R^d$). Since $K = |\partial \cI_t| < \infty$,
		\[
		\delta_1 = \min\{D_{\ell_1}(u, (\partial \cI_t \cup V) \setminus \{u\}) \,:\, u \in \partial \cI_t\} > 0.
		\]
		For $s$ such that $|t-s| < \delta_1/2$ and $u \in \partial \cI_t$, let $\psi_s(u) = \partial \cI_s \cap B_{\ell_1}(u,\delta_1/2)$ (here $B_{\ell_1}(u,\delta_1/2)$ is the open ball of radius $\delta_1/2$ around $u$ with respect to $D_{\ell_1}$). The sets $\psi_s(u)$ are non-empty and disjoint. If $s >t$, $\partial \cI_s = \cup_{u \in \partial \cI_t} \psi_s(u)$; if $s < t$, we let $\mc{R}_s = \partial \cI_s \setminus (\cup_{u \in \partial \cI_t} \psi_s(u))$. We have
		\begin{align*}
		|F(t) - F(s)| \leq &\sum_{u \in \partial \cI_t} \Big|\Hm_t(v,u) f(u) - \sum_{u' \in \psi_s(u)} \Hm_s(v,u') f(u')\Big| \\
		&+ \one_{s < t} \Big|\sum_{u' \in \mc{R}_s} \Hm_s(v,u') f(u')\Big|.
		\end{align*}
		Since $\cI_t$ is compact, $M = \max \{ f(u) \,:\, u \in \cI_t\} < \infty$. Since $\partial \cI_t$ is finite, for any $\epsilon > 0$ there exists $0 < \delta_2 \leq \delta_1/2$ such that $|f(u) - f(u')| < \epsilon/2$ for $u \in \partial \cI_t$ and $u' \in B_{\ell_1}(u,\delta_2)$. Thus, for $|t-s| < \delta_2$
		\[
		|F(t) - F(s)| \leq M \sum_{u \in \partial \cI_t} \Big|\Hm_t(v,u) - \Hm_s(v, \psi_s(u))\Big| + \one_{s < t} M \Hm_s(v, \mc{R}_s) + \frac{\epsilon}{2}.
		\]
		Finally, it follows from the construction of $\til{B}_t$ (by considering the excursions of a standard Brownian motion) that for any $u \in \ctG$, $b \leq D_{\ell_1}(u,V\setminus\{u\})$, and $u' \in \ctG$ such that $a = |u-u'|_{\ell_1} \leq b$,
		\[
		\Hm(u,u' \,;\, \{u'\} \cup \partial B_{\ell_1}(u,b)) \geq \frac{b}{b + (2d-1)a}.
		\]
		Combining this with the previous bound it follows from a straightforward calculation that there exists $\delta_3 \leq \delta_2$ such that if $|t-s| < \delta_3$,
		\begin{equation*}
		|F(t) - F(s)| \leq \epsilon. \qedhere
		\end{equation*}
	\end{proof}

	\section{Percolation in three and higher dimensions}\label{High d proofs}
	In the case $d \geq 3$, we let the vertex set $V = \ZZ^d$ be the whole lattice and study the behavior of $p_{N, h} = \PP(\0 \stackrel{\geq h}{\longleftrightarrow} \partial V_N)$ as $N \to \infty$. The theorems characterizing the behavior of $p_{N,h}$ will follow from Proposition~\ref{Law of capacity}, stated below. The idea is to consider an exploration martingale with source set $\cI_0 = \{\0\}$ and target set $A = \{x_K\}$, with $|x_K| = K$ as $K \to \infty$. We note that the calculations below are valid for any sequence $\{x_K\}_{K \geq 1}$ satisfying this condition so we do not specify $x_K$ further. For ease of notation we will write $M_{K}$ for the exploration martingale instead of $M_{x_K}$. Recall $\sigma_d^2 = G(\0,\0)$ and note that by translation invariance $\sigma_d^2 = G(u,u)$ for all $u \in \ZZ^d$. Recall also the process $\pi_{K,t} = \sum_{v \in \mc{I}_t} \Hm_t(x_K,v)$.
	\begin{proposition}\label{Law of capacity}
		Let $B$ be a standard one-dimensional Brownian motion that is independent of $\til{\phi}_0$ and $\tau_h$ be the following stopping time
		\begin{align*}
		\tau_{h} &= \inf \left\{ t \geq 0 \,:\, B_t \leq h t - \frac{\til{\phi}_{\0} - h}{\sigma_d^2} \right\}.
		\end{align*}
		We have
		\[
		\lim_{K \to \infty} \mbb{P}\left(\frac{\pi_{K,\infty} - \pi_{K,0}}{G(\0,x_K)}\leq s\right) = \mbb{P}(\tau_h \leq s), \quad \forall s \geq 0.
		\]
		That is, $(\pi_{K,\infty}-\pi_{K,0})/G(\0,x_K)$ converges in law to $\tau_h$ as $K \to \infty$.
	\end{proposition}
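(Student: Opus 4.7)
My plan is to use the exploration martingale $M_K := M_{\{x_K\}}$ with source $\mc I_0 = \{\0\}$ to identify $\til\pi_K$ as (approximately) the first hitting time of a standard Brownian motion to the line appearing in the definition of $\tau_h$. The case $\til\phi_\0 \leq h$ is trivial, since then $\mc I_t \equiv \{\0\}$ and $\til\pi_K = 0 = \tau_h$; I therefore restrict attention to the event $\{\til\phi_\0 > h\}$.

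First, I identify the martingale values at the two endpoints of the exploration. At $t=0$ we have $M_{K,0} = \pi_{K,0}\til\phi_\0 = (G(\0,x_K)/\sigma_d^2)\til\phi_\0$. Letting $T$ denote the (possibly infinite) time at which the exploration of the cluster of $\0$ in $\til E^{\geq h}$ terminates, at $t=T$ the boundary $\partial \mc I_T$ lies entirely on $\{\til\phi = h\}$, so $M_{K,T} = h\pi_{K,\infty}$; and for $0 \leq t < T$, the growing frontier of $\mc I_t$ consists of points where $\til\phi > h$ strictly, giving $M_{K,t} > h \pi_{K,t}$. Rearranging yields
\[ M_{K,t} - M_{K,0} \geq h(\pi_{K,t} - \pi_{K,0}) - \tfrac{G(\0,x_K)}{\sigma_d^2}(\til\phi_\0 - h), \]
with strict inequality for $t<T$ and equality at $t=T$.

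Next, by Theorem~\ref{Time-change theorem} there is a Brownian motion $B^{(K)}$ independent of $\mc F_{\mc I_0}$ (hence of $\til\phi_\0$) with $M_{K,t} - M_{K,0} = B^{(K)}_{\langle M_K\rangle_t}$, and Brownian scaling makes $\til B^{(K)}(s) := B^{(K)}_{G(\0,x_K)^2 s}/G(\0,x_K)$ a standard BM. Setting $\til\pi_{K,t} := (\pi_{K,t} - \pi_{K,0})/G(\0,x_K)$ and $U_{K,t} := \langle M_K\rangle_t/G(\0,x_K)^2$ and dividing the previous display by $G(\0,x_K)$ gives
\[ \til B^{(K)}(U_{K,t}) \geq h \til\pi_{K,t} - (\til\phi_\0 - h)/\sigma_d^2, \]
strict for $t<T$, equality at $t=T$. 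Using the Green's function asymptotic $G(w, x_K) = G(\0, x_K)(1 + O(|w|/K))$ on $\mbb Z^d$ together with the identity $\langle M_K\rangle_t = \sum_{w \in \mc I_t} \Hm_t(x_K,w) G(w, x_K) - G(\0,x_K)^2/\sigma_d^2$ derived in Section~\ref{sec:martingale}, a direct computation yields $U_{K,t} = \til\pi_{K,t} + O(\operatorname{diam}(\mc I_t)/K)(1+\til\pi_{K,t})$.

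To conclude, set $\tau_K^* := \inf\{u : \til B^{(K)}(u) \leq hu - (\til\phi_\0 - h)/\sigma_d^2\}$; since $\til B^{(K)}$ is a standard BM independent of $\til\phi_\0$, we have $\tau_K^* \stackrel{d}{=} \tau_h$ for every $K$. On the event that the cluster of $\0$ is bounded, $\operatorname{diam}(\mc I_T)$ is a.s.\ finite, so the approximation $U_{K,t} \approx \til\pi_{K,t}$ is uniform in $t \in [0,T]$ as $K \to \infty$; combined with the a.s.\ continuity of the first hitting time functional at regular Brownian paths, this yields $\til\pi_K = \tau_K^* + o_{\mbb P}(1)$. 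On the complementary event (cluster infinite), $\pi_{K,\infty}$ stays bounded away from zero while $G(\0,x_K)\to 0$, forcing $\til\pi_K \to \infty$, which matches the behavior of $\tau_h$ on $\{\tau_h = \infty\}$. The principal technical obstacle is making the comparison $U_{K,t} \approx \til\pi_{K,t}$ sufficiently uniform in $t$ to pass through the continuity of hitting times; this requires control on $\operatorname{diam}(\mc I_t)/K$ and is most delicate in the critical and supercritical regimes, where the cluster diameter can grow with $K$.
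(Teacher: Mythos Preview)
Your skeleton is the same as the paper's: exploration martingale from $\mc I_0=\{\0\}$, Dubins--Schwarz time change, and the comparison of $\langle M_K\rangle_t$ with $\pi_{K,t}$ via the Green's function asymptotic $G(w,x_K)=G(\0,x_K)(1+O(|w|/K))$. Your formula $U_{K,t}=\til\pi_{K,t}+O(\operatorname{diam}(\mc I_t)/K)(1+\til\pi_{K,t})$ is exactly the content of Lemma~\ref{QV convergence}.

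The gap is precisely the one you flag in your last sentence, and your bounded/unbounded-cluster dichotomy does not close it. On the bounded-cluster event the diameter is a.s.\ finite but not uniformly bounded in $\omega$, so pointwise convergence $\sup_{t\le T}|U_{K,t}-\til\pi_{K,t}|\to 0$ does not directly give $\til\pi_K-\tau_K^*\to 0$ in probability; more concretely, for $t<T$ you only know $\til B^{(K)}(U_{K,t})>h\til\pi_{K,t}-b$, not $>hU_{K,t}-b$, so the Brownian motion may touch the line with slope $h$ strictly before time $U_{K,T}$. On the unbounded-cluster event you show $\til\pi_K\to\infty$, but you do not show that $\tau_K^*\to\infty$ there as well, which a coupling argument would require. (Also, the cluster does not depend on $K$, so ``the cluster diameter can grow with $K$'' misstates the issue.)

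The paper's resolution is a single clean device you are missing: for each fixed $s>0$, first choose a \emph{deterministic} $N$ so that any connected set joining $\0$ to $\partial V_N$ already satisfies $(\Hm(x_K,\mc I)-\Hm(x_K,\0))/G(\0,x_K)>4s$ for all large $K$ (Lemma~\ref{Capacity bounds}). One then restricts attention to $t\le D_h(\0,\partial V_N)$, where the diameter is at most $N$ \emph{deterministically}, so the error $O(N/K)$ is uniform both in $t$ and in $\omega$. The resulting multiplicative sandwich $(1-\epsilon)\til\pi_{K,t}\le U_{K,t}\le(1+\epsilon)\til\pi_{K,t}$ traps the first time $\til B^{(K)}$ hits the line of slope $h$ between the first hitting times of the lines of slope $h/(1\pm\epsilon)$, and one sends $\epsilon\downarrow 0$. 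This truncation at a fixed box is what converts your acknowledged ``principal technical obstacle'' into a two-line sandwich argument.
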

The proposition will follow from the following two lemmas, whose proofs are deferred to the end of the section. The first lemma will allow us to relate the quadratic variation of $M_K$ to the harmonic measure of $\mc{I}_t$.
	\begin{lemma}\label{QV convergence}
		Let $N \geq 0$ be given and $\mc I$ be any compact connected subset of $\til{\mc{G}}$ satisfying $\0 \in \mc I \subseteq V_N$. Let $\Hm(x_K,\mathcal I)$ be the probability that a metric graph Brownian motion started at $x_K$ hits $\mathcal I$. We have
		\[
		\frac{\var[\til\phi_{x_K}\mid \mathcal F_{\0}]
			- \var[\til\phi_{x_K}\mid \mathcal F_{\mathcal I}]}{G(\0,x_K)(
			\Hm(x_K,\mathcal I)
			- \Hm(x_K,\0))}
			= 1 + O\left(\frac{N}{K}\right) \text{ as } K \to \infty.
		\]
	\end{lemma}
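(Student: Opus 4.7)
The plan is to expand both sides in terms of the harmonic measure weights $h_w = \Hm(x_K, w; \mc{I})$, use a strong Markov identity to simplify, and then reduce the problem to a per-point comparison of Green's function values that is dealt with by standard Green's function asymptotics together with the metric-graph interpolation formula \eqref{Metric Green Interpolation} for handling points on edges incident to $\0$.

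Starting from \eqref{conditional variance} and \eqref{eq-G-t}, I would write
\[
\var[\til\phi_{x_K}\mid\mc F_{\0}] - \var[\til\phi_{x_K}\mid\mc F_{\mc I}] = \sum_{w \in \partial \mc{I}} h_w G(w,x_K) - \frac{G(\0,x_K)^2}{\sigma_d^2}.
\]
Decomposing $\PP_{x_K}(\tau_{\0} < \infty) = G(\0,x_K)/\sigma_d^2$ over the hitting location of $\mc{I}$ gives the identity $\sum_w h_w G(w,\0) = G(\0,x_K)$, which allows me to rewrite the denominator as
\[
G(\0,x_K)\bigl(\Hm(x_K,\mc I) - \Hm(x_K,\0)\bigr) = G(\0,x_K)\sum_w h_w\Bigl(1 - \frac{G(w,\0)}{\sigma_d^2}\Bigr).
\]
Subtracting these yields the compact identity $\mathrm{Num} - \mathrm{Denom} = \sum_w h_w\bigl(G(w,x_K) - G(\0,x_K)\bigr)$, so it suffices to establish the per-point bound
\[
|G(w,x_K) - G(\0,x_K)| \leq C\,\tfrac{N}{K}\,G(\0,x_K)\Bigl(1 - \tfrac{G(w,\0)}{\sigma_d^2}\Bigr) \quad (\ast)
\]
for every $w \in \partial\mc I$, with $C$ independent of $\mc I$ and $w$; summing $(\ast)$ with weights $h_w$ then gives $|\mathrm{Num} - \mathrm{Denom}| \leq C(N/K)\mathrm{Denom}$ and hence the lemma.

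For a vertex $w \in V_N \setminus \{\0\}$, the standard asymptotic $G(\0,x) = c_d|x|^{-(d-2)}(1+o(1))$ yields the left-hand bound $|G(w,x_K) - G(\0,x_K)| = O(N/K)\,G(\0,x_K)$; meanwhile, $1 - G(w,\0)/\sigma_d^2 = \PP_w(\tau_{\0} = \infty)$ is bounded below by a positive constant $c_1(d)$ uniformly over all $w \neq \0$, via a one-step strong Markov argument reducing the bound to the escape probabilities from nearest neighbors of $\0$. For $w$ in the interior of an edge $e(u,v)$ with $u,v \neq \0$, \eqref{Metric Green Interpolation} (with its indicator term absent since $x_K$ is far from $e(u,v)$) expresses both $G(w,x_K) - G(\0,x_K)$ and $1 - G(w,\0)/\sigma_d^2$ as the same linear combination of the vertex-level quantities, so $(\ast)$ transfers immediately from the vertex case.

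The delicate case, and the main obstacle, is when $w$ lies on an edge $e(\0,v)$ adjacent to the origin at parameter $r$: there $1 - G(w,\0)/\sigma_d^2 = r\,\PP_v(\tau_{\0} = \infty)$ vanishes linearly as $w \to \0$, which could a priori ruin $(\ast)$. The saving grace is \eqref{Metric Green Interpolation} again: $G(w,x_K) = (1-r)G(\0,x_K) + r\,G(v,x_K)$, so $G(w,x_K) - G(\0,x_K) = r\bigl[G(v,x_K) - G(\0,x_K)\bigr]$. The factor $r$ therefore appears on both sides of $(\ast)$ and cancels cleanly, reducing the estimate to the vertex-level bound at $v$. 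Combining the three cases and summing over $w$ completes the proof.
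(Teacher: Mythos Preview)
Your proof is correct and follows essentially the same route as the paper's. Both arguments compute the difference between numerator and denominator as $\sum_w h_w\bigl(G(w,x_K)-G(\0,x_K)\bigr)$, bound $|G(w,x_K)-G(\0,x_K)|$ by $O(|w|/K)\,G(\0,x_K)$ via the Green's function asymptotic (extended to edge points by \eqref{Metric Green Interpolation}), and compare termwise against $1-G(w,\0)/\sigma_d^2=\PP_w(\tau_\0=\infty)$, using that this escape probability is bounded below by a constant for $w$ at lattice distance $\geq 1$ from $\0$ and scales like $r$ on edges incident to $\0$; your identity $\sum_w h_w G(w,\0)=G(\0,x_K)$ is exactly the Green's function rephrasing of the paper's strong Markov decomposition $\Hm(x_K,\mc I)-\Hm(x_K,\0)=\sum_w h_w\,\PP_w(\tau_\0=\infty)$.
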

The second lemma gives upper and lower bounds for $(\pi_{K,t}-\pi_{K,0})/G(\0,x_K)$ at the time $\mc{I}_t$ hits $\partial V_N$.
	\begin{lemma}\label{Capacity bounds}
		Let $f_1$ and $f_2$ be the following functions
		\begin{align*}
		f_1(N) &= \limsup_{K \to \infty} \frac{\Hm(x_K,V_N) - \Hm(x_K,\0)}{G(\0,x_K)},\\
		f_2(N) &= \liminf_{K \to \infty}\inf_{\mc I} \frac{\Hm(x_K, \mc{I}) - \Hm(x_K,\0)}{G(\0,x_K)},
		\end{align*}
		where the infimum in the definition of $f_2$ is taken over all compact, connected sets containing $\0$ and intersecting $\partial V_N$. We have
		\begin{align*}
		f_1(N) &= O(N^{d-2}),\\
		f_2(N) &= \Omega\left(\frac{N}{\log(N)^{\one_{d = 3}}}\right).
		\end{align*}
	\end{lemma}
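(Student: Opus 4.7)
The plan is to reduce both inequalities, via the last-exit decomposition and the gradient estimate for the discrete Green's function, to estimates on the discrete capacity of vertex subsets of $V_N$. For any vertex set $A \subset V$ containing $\0$, the last-exit decomposition for continuous-time simple random walk gives $\Hm(x_K, A) = \sum_{v \in A} G(x_K, v)\, e_A(v)$, where $e_A$ is the equilibrium measure with $\mathrm{cap}(A) = \sum_v e_A(v)$ (this carries over to the metric graph because $\ctG$-Brownian motion traced on $V$ is continuous-time simple random walk). Combined with the standard estimate $G(x_K, v) = G(\0, x_K)(1 + O(N/K))$ uniformly for $v \in V_N$, this yields
\[
\Hm(x_K, A) \;=\; G(\0, x_K)\, \mathrm{cap}(A)\, (1 + O(N/K)), \qquad A \subset V_N,
\]
and in particular $\Hm(x_K, \0) = G(\0, x_K)/\sigma_d^2$. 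Dividing by $G(\0, x_K)$ and sending $K \to \infty$ converts both $f_1(N)$ and $f_2(N)$ into capacity statements.

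The upper bound on $f_1$ follows immediately by taking $A = V_N$ above and invoking the classical estimate $\mathrm{cap}(V_N) = \Theta(N^{d-2})$ for $d \geq 3$, giving $f_1(N) = \mathrm{cap}(V_N) - 1/\sigma_d^2 = O(N^{d-2})$.

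For $f_2$, fix any compact connected $\mathcal{I} \ni \0$ meeting $\partial V_N$. Because $\ctG$ is locally path-connected, $\mathcal{I}$ is path-connected, so there is a continuous path in $\mathcal{I}$ from $\0$ to some vertex $v^\ast \in \mathcal{I} \cap \partial V_N$; this path visits a sequence of adjacent lattice vertices $\0 = v_0, v_1, \ldots, v_k = v^\ast$, all lying in $\mathcal{I}$. Since $|v_k|_\infty = N$ while consecutive values of $|v_j|_\infty$ differ by at most one, for each $i \in \{0, 1, \ldots, N\}$ one may select $u_i \in \mathcal{I} \cap V$ with $|u_i|_\infty = i$. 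The reverse triangle inequality then yields $|u_i - u_j| \geq |i - j|$, and by monotonicity of hitting probabilities $\Hm(x_K, \mathcal{I}) \geq \Hm(x_K, A_\mathcal{I})$ with $A_\mathcal{I} = \{u_0, \ldots, u_N\}$. It therefore suffices to lower bound $\mathrm{cap}(A_\mathcal{I})$ uniformly in $\mathcal{I}$.

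To do so I use the variational formula $\mathrm{cap}(A_\mathcal{I})^{-1} = \inf_{\mu} \sum_{v, w} G(v, w)\, \mu(v)\, \mu(w)$ with $\mu$ uniform on $A_\mathcal{I}$. The Green's function asymptotic $G(u_i, u_j) \leq C |u_i - u_j|^{2-d} \leq C |i - j|^{2-d}$ for $i \neq j$ yields
\[
\frac{1}{(N+1)^2} \sum_{0 \leq i, j \leq N} G(u_i, u_j) \;=\; O\bigl((\log N)^{\one_{d=3}}/N\bigr),
\]
since $\sum_{k=1}^N k^{2-d}$ is $\Theta(\log N)$ for $d = 3$ and $O(1)$ for $d \geq 4$. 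Hence $\mathrm{cap}(A_\mathcal{I}) = \Omega(N/(\log N)^{\one_{d=3}})$ uniformly in $\mathcal{I}$, and absorbing the $O(1)$ constant $1/\sigma_d^2$ gives the claimed lower bound. The only substantive obstacle is the uniform capacity lower bound: the key observation is that every admissible $\mathcal{I}$ must contain a lattice vertex at each integer $\ell^\infty$-level between $0$ and $N$, which forces the pairwise separation $|u_i - u_j| \geq |i - j|$ and makes the uniform-measure energy estimate go through.
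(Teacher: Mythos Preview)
Your proof is correct and takes essentially the same route as the paper: both extract from $\mc I$ a vertex set $\{u_i\}_{i=0}^N$ with $|u_i|_\infty = i$ (hence $|u_i-u_j|\geq |i-j|$), and the decisive estimate in either case is $\sum_{j} G(u_i,u_j) = O\bigl((\log N)^{\one_{d=3}}\bigr)$. The only difference is packaging---you pass through the last-exit decomposition and the Dirichlet variational bound $\mathrm{cap}(A)^{-1}\leq \mathcal E(\mu)$ with $\mu$ uniform, whereas the paper uses the second-moment identity $\Hm(x_K,\mc U)=\E[Y]/\E[Y\mid Y>0]$ for the visit count $Y$; these are standard equivalent formulations.

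One small point: the inference ``$\ctG$ locally path-connected $\Rightarrow$ $\mc I$ path-connected'' is not valid for an arbitrary compact connected subset of a locally path-connected space. The conclusion you actually need---that $\mc I$ contains a lattice vertex at every $\ell_\infty$-level $0\leq i\leq N$---is still true, and follows more directly from the fact that the vertex shell $\{v\in V:|v|_\infty=i\}$ separates $\0$ from $\partial V_N$ in $\ctG$, so any connected set joining them must intersect each shell. The paper simply asserts this without justification, so you are at no disadvantage here.
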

As promised, taking these two lemmas as given, Proposition~\ref{Law of capacity} follows easily.
	\begin{proof}[Proof of Proposition \ref{Law of capacity}]
		 First, we note that the case $s = 0$ is trivial, as $\{\pi_{K,\infty} = \pi_{K,0}\} = \{\til\phi_\0 \leq h\} = \{\tau_h = 0\}$. Therefore, we will assume that $\til\phi_\0 > 0$ and take $s > 0$. We let $f_2$ be as in Lemma~\ref{Capacity bounds}, and $N$ be such that $f_2(N) > 4s$. By Lemma~\ref{QV convergence}, for any $0 < \epsilon < 1/2$ there exists $K_0$ such that for all $K \geq K_0$ the following holds almost surely. For all $0 \leq t \leq D_h(\0,\partial V_N)$,
		 \begin{equation}\label{QV approximate equality}
		 \frac{\pi_{K,t} - \pi_{K,0}}{G(\0,x_K)}(1 - \epsilon) \leq \frac{\langle M \rangle_t}{G(\0,x_K)^2} \leq \frac{\pi_{K,t} - \pi_{K,0}}{G(\0,x_K)}(1 + \epsilon).
		 \end{equation}
		 Additionally, by our choice of $N$ we can take $K_0$ large enough that for any $K \geq K_0$ and any compact connected set $\mc I$ connecting $\0$ to $\partial V_N$ we have
		 \[
		 \frac{\Hm(x_K,\mc I) - \Hm(x_K,\0)}{G(\0,x_K)} > 4s.
		 \]
		 
		 By the definition of $M_K$, we have $M_K \geq h \pi_{K,t}$, with equality if and only if $\pi_{K,t} = \pi_{K,\infty}$. Letting $\eta_h = \inf\{t \geq 0\,:\, M_{K,t} = h \pi_{K,t}\} \wedge D_h(\0, \partial V_N)$ we have by our assumptions on $K$
		 \[
		 \left\{\frac{\pi_{K,\infty}-\pi_{K,0}}{G(\0,x_K)} \leq s\right\} = \left\{\frac{\pi_{K,\eta_h}-\pi_{K,0}}{G(\0,x_K)} \leq s\right\}
		 \]
		Now, we let $T_t = \inf\{s \geq 0 \,:\, \langle M_K \rangle_s/G(\0,x_K)^2 > t\}$ and $B$ be a standard Brownian motion satisfying $B_t = (M_{K,T_t} - M_{K,0})/G(\0,x_K)$ for $0 \leq t \leq \langle M_K\rangle_\infty/G(\0,x_K)^2$. As usual, we take $B_t$ independent of $\til\phi_\0$. We note
		\[
		\eta_h = \inf\left\{t \geq 0\,:\, \frac{M_{K,t}-M_{K,0}}{G(\0,x_K)} \leq h \frac{\pi_{K,t} - \pi_{K,0}}{G(\0,x_K)} - \frac{\til\phi_\0 - h}{\sigma^2_d}\right\},
		\]
		where we used the fact that $G(\0,x_K) = \pi_{K,0}\sigma^2_d$. Therefore, letting $\tau'_h = \langle M_K \rangle_{\eta_h}/G(\0,x_K)^2$ and $\tau_s = \inf\{t \geq 0 \,:\, B_t \leq st - (\til\phi_\0 - h)/\sigma^2_d\}$ we have by \eqref{QV approximate equality}
		\begin{align*}
		\tau_{h/(1+\epsilon)} \wedge (2s) \leq \tau'_h \leq \tau_{h/(1-\epsilon)}, \quad h < 0, \\
		\tau_{h/(1-\epsilon)} \wedge (2s) \leq \tau'_h \leq \tau_{h/(1+\epsilon)}, \quad h \geq 0.
		\end{align*}
		By another application of \eqref{QV approximate equality}, we have
		\[
		\frac{\tau_h'}{1+\epsilon} \leq \frac{\pi_{K,\eta_h}-\pi_{K,0}}{G(\0,x_K)} \leq \frac{\tau_h'}{1-\epsilon}.
		\]
		Therefore, we conclude that for all $K \geq K_0$,
		\begin{align*}
		\mbb{P}\left(\frac{\tau_{h/(1-\epsilon)}}{1-\epsilon} \leq s \right) \leq \mbb{P}\left( \frac{\pi_{K,\eta_h}-\pi_{K,0}}{G(\0,x_K)} \leq s\right) \leq \mbb{P}\left(\frac{\tau_{h/(1+\epsilon)}}{1+\epsilon} \leq s \right), \quad h < 0, \\
		\mbb{P}\left(\frac{\tau_{h/(1+\epsilon)}}{1-\epsilon} \leq s \right) \leq \mbb{P}\left( \frac{\pi_{K,\eta_h}-\pi_{K,0}}{G(\0,x_K)} \leq s\right) \leq \mbb{P}\left(\frac{\tau_{h/(1-\epsilon)}}{1+\epsilon} \leq s \right), \quad h \geq 0.
		\end{align*}
		Letting $\epsilon \downarrow 0$ and noting that $\lim_{\epsilon \downarrow 0} \tau_{h/(1+\epsilon)} = \lim_{\epsilon \downarrow 0} \tau_{h/(1-\epsilon)} =\tau_h$ almost surely (see, e.g. Proposition~\ref{Hitting times with drift} below) then concludes the proof.
	\end{proof}
	
	The following proposition (c.f. \cite[Equation 2.0.2 in Part II]{BrownianMotionHandbook}) gives the distribution of $\tau_h$, and will allow us to obtain quantitative estimates from Proposition~\ref{Law of capacity}.
	\begin{proposition}\label{Hitting times with drift}
		For $m \in \mathbb{R}$ and $b > 0$, let $\tau = \inf \{ t > 0 \,:\, B_t \leq mt - b\}$. Then for $T > 0$,
		\[
		\PP(\tau \leq T) = \bar{\Phi}\left(\frac{b}{\sqrt{T}} - m\sqrt{T} \right) + e^{2bm}\bar{\Phi}\left(\frac{b}{\sqrt{T}} + m \sqrt{T}\right).
		\]
	\end{proposition}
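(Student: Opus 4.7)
The plan is to reformulate the statement in terms of the first passage time of a Brownian motion with drift, identify its density in closed form, and then verify the claimed expression by differentiation.

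Setting $X_t = mt - B_t$, we have that $X$ is a Brownian motion with drift $m$ started at $0$, and $\tau = \inf\{t > 0 : X_t \geq b\}$ is its first hitting time of the level $b > 0$. To obtain the density of $\tau$, I would apply Girsanov's theorem: under the measure $\mathbb{Q}$ defined on $\mathcal{F}_{\tau\wedge T}$ by $d\mathbb{Q}/d\PP = \exp(-m X_{\tau \wedge T} + m^2(\tau\wedge T)/2)$, the process $X$ is a standard Brownian motion, and its first hitting time of $b$ has the reflection-principle density $g(t) = (b/\sqrt{2\pi t^3})e^{-b^2/(2t)}$. Changing back to $\PP$ and using $X_\tau = b$ on $\{\tau < \infty\}$ collapses the Radon--Nikodym factor $\exp(mX_\tau - m^2\tau/2)$ into the Gaussian exponent of $g$, giving the inverse Gaussian density
\[
f_\tau(t) = \frac{b}{\sqrt{2\pi t^3}} \exp\!\left(-\frac{(b - mt)^2}{2t}\right), \quad t > 0.
\]

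To conclude, I would verify that the right-hand side of the stated identity, call it $F(T)$, satisfies $F(0^+) = 0$ and $F'(T) = f_\tau(T)$. The boundary value is immediate since both arguments of $\bar{\Phi}$ tend to $+\infty$ as $T \downarrow 0$. The decisive observation for the derivative is the symmetry
\[
e^{2bm}\,\varphi\!\left(\tfrac{b}{\sqrt{T}} + m\sqrt{T}\right) = \varphi\!\left(\tfrac{b}{\sqrt{T}} - m\sqrt{T}\right),
\]
obtained by expanding the squares in the Gaussian exponents. Combined with $\bar{\Phi}' = -\varphi$ and the chain rule applied to $u(T) = b/\sqrt{T}\mp m\sqrt{T}$, this makes the $m$-dependent contributions of the two terms cancel, and the surviving pieces collapse to $(b/T^{3/2})\varphi(b/\sqrt{T} - m\sqrt{T})$, which is exactly $f_\tau(T)$.

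The whole statement is a classical calculation (as recorded in \cite{BrownianMotionHandbook}), so no step is a genuine obstacle. The only mild subtlety is to carry out the change of measure at the stopping time $\tau \wedge T$ rather than at a fixed horizon, so that the exponential local martingale furnishing the Radon--Nikodym density is a genuine martingale when evaluated on $\{\tau \leq T\}$; a self-contained presentation can in fact sidestep Girsanov entirely and proceed directly from the identity above via differentiation.
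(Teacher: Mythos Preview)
Your argument is correct. The paper itself does not prove this proposition at all; it simply records the formula with a reference to \cite[Equation 2.0.2 in Part II]{BrownianMotionHandbook}. Your proposal therefore supplies strictly more than the paper does: you derive the inverse Gaussian density via Girsanov and then verify the closed-form distribution function by differentiation, both of which are standard and correctly executed (the key cancellation identity $e^{2bm}\varphi(b/\sqrt{T}+m\sqrt{T})=\varphi(b/\sqrt{T}-m\sqrt{T})$ is exactly what makes the derivative collapse). There is nothing to compare on the level of method, since the paper treats the result as a black box.
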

	We note that it follows from this proposition that $\mbb{P}(\tau_h \leq s)$ is continuous in $s$, so we will not distinguish between $\{\tau_h \leq s\}$ and $\{\tau_h < s\}$ during the rest of this section.
	
	\subsection{Proof of main theorems}
	In this subsection we prove Theorems \ref{High d super-critical result}, \ref{High d sub-critical result}, and \ref{High d critical result}. The following corollary of Proposition~\ref{Law of capacity} will be used in each case
	\begin{corollary}\label{Capacity and radius}
	Let $f_1$ and $f_2$ be as in Lemma~\ref{Capacity bounds}, and $\tau_h$ be as in Proposition~\ref{Law of capacity}. We have
	\[
	\mbb{P}(\tau_h \geq f_1(N)) \leq p_{N,h} \leq \mbb{P}(\tau_h \geq f_2(N))
	\]
	\end{corollary}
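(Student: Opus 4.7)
The plan is to apply Proposition~\ref{Law of capacity} on the events $\{\0 \stackrel{\geq h}{\longleftrightarrow} \partial V_N\}$ and its complement, comparing the cluster $\mc{I}_\infty$ containing $\0$ with the sets entering the definitions of $f_1$ and $f_2$. On each event I will sandwich the random quantity $(\pi_{K,\infty} - \pi_{K,0})/G(\0,x_K)$ deterministically by $f_2(N)$ from below or $f_1(N)$ from above, and then pass to the limit $K \to \infty$ using weak convergence to $\tau_h$.

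For the upper bound, I observe that on $\{\0 \stackrel{\geq h}{\longleftrightarrow} \partial V_N\}$ the set $\mc{I}_\infty$ is a compact, connected subset of $\til{\mc{G}}$ containing $\0$ and meeting $\partial V_N$, so it lies in the class over which $f_2(N)$ is defined. The $\liminf$ definition of $f_2$ gives, for every $\epsilon > 0$, a $K_0$ such that $(\Hm(x_K,\mc{I}) - \Hm(x_K,\0))/G(\0,x_K) \geq f_2(N) - \epsilon$ for every admissible $\mc{I}$ and every $K \geq K_0$. Specializing to $\mc{I}=\mc{I}_\infty$ yields the inclusion
\[
\{\0 \stackrel{\geq h}{\longleftrightarrow} \partial V_N\} \subseteq \Big\{\tfrac{\pi_{K,\infty} - \pi_{K,0}}{G(\0,x_K)} \geq f_2(N) - \epsilon\Big\},
\]
and taking probabilities, letting $K \to \infty$ via Proposition~\ref{Law of capacity}, and finally sending $\epsilon \downarrow 0$ (using that $\tau_h$ has a continuous distribution, as is visible from Proposition~\ref{Hitting times with drift}) produces $p_{N,h} \leq \mbb{P}(\tau_h \geq f_2(N))$.

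For the lower bound I argue contrapositively on $\{\0 \not\leftrightarrow \partial V_N\}$. The key topological step is that on this event $\mc{I}_\infty$ avoids $\partial V_N$ and is contained in the metric subgraph spanned by $V_N$: if $\mc{I}_\infty$ contained a point on any edge with an endpoint in $V_N^c$, the path in $\til{E}^{\geq h}$ joining $\0$ to that point would, by the metric graph structure, have to transit between distinct edges through a vertex, and in crossing from $V_N$ to $V_N^c$ it would necessarily visit a vertex of $\partial V_N$, contradicting the hypothesis. Consequently a metric graph Brownian motion started at $x_K$ with $|x_K| = K > N$ must first hit $\partial V_N$ before reaching $\mc{I}_\infty$, giving $\Hm(x_K,\mc{I}_\infty) \leq \Hm(x_K,V_N)$. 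The $\limsup$ definition of $f_1$ then yields $\{\0 \not\leftrightarrow \partial V_N\} \subseteq \{(\pi_{K,\infty} - \pi_{K,0})/G(\0,x_K) \leq f_1(N) + \epsilon\}$ for $K$ large enough, and the same limiting procedure gives $1 - p_{N,h} \leq \mbb{P}(\tau_h \leq f_1(N))$, i.e.\ $p_{N,h} \geq \mbb{P}(\tau_h \geq f_1(N))$.

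The main obstacle is the topological step underlying the lower bound: one must argue carefully that the cluster of $\0$ cannot extend into any edge reaching $V_N^c$ without first visiting $\partial V_N$. Once this elementary but necessary graph-theoretic observation is in hand, the rest is a routine application of the weak convergence in Proposition~\ref{Law of capacity} combined with the atom-freeness of $\tau_h$.
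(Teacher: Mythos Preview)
Your approach is the same as the paper's: sandwich $(\pi_{K,\infty}-\pi_{K,0})/G(\0,x_K)$ between the quantities defining $f_{1,K}$ and $f_{2,K}$, pass to the limit via Proposition~\ref{Law of capacity}, and use continuity of the law of $\tau_h$. The lower-bound (contrapositive) argument is fine, including the topological step.

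There is one genuine slip in the upper-bound step. You assert that on $\{\0 \stackrel{\geq h}{\longleftrightarrow} \partial V_N\}$ the set $\mc{I}_\infty$ is compact. This is false in general: for $h<0$ the cluster of $\0$ percolates with positive probability, so $\mc{I}_\infty$ can be unbounded. Consequently $\mc{I}_\infty$ need not belong to the class over which the infimum defining $f_2(N)$ is taken, and you cannot ``specialize to $\mc{I}=\mc{I}_\infty$'' directly. The repair is immediate: on the connection event set $T=D_h(\0,\partial V_N)<\infty$ and use $\mc{I}_T$, which \emph{is} a compact connected set containing $\0$ and meeting $\partial V_N$; then
\[
\pi_{K,\infty}=\Hm(x_K,\mc{I}_\infty)\ \geq\ \Hm(x_K,\mc{I}_T)\ \geq\ \inf_{\mc{I}}\Hm(x_K,\mc{I}),
\]
by monotonicity of harmonic measure in the target set. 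With this substitution your argument goes through unchanged and matches the paper's.
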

	\begin{proof}
		Let $f_{1,K}$ and $f_{2,K}$ be defined as follows
		\[
		f_{1,K} = \frac{\Hm(x_K,V_N)-\Hm(x_K,\0)}{G(\0,x_K)}, \quad f_{2,K} = \inf_{\mc I} \frac{\Hm(x_K,\mc  I)-\Hm(x_K,\0)}{G(\0,x_K)},
		\]
		where as in Lemma~\ref{Capacity bounds}, the infimum is over all compact connected subsets of $\til{\mc G}$ connecting $\0$ to $\partial V_N$. We have for all $K$,
		\[
		\mbb{P}\left(\frac{\pi_{K,\infty} - \pi_{K,0}}{G(\0,x_K)} \geq f_{1,K}(N)\right) \leq p_{h,N}\leq \mbb{P}\left(\frac{\pi_{K,\infty} - \pi_{K,0}}{G(\0,x_K)} \geq f_{2,K}(N)\right).
		\]
		Taking limits as $K \to \infty$ we get for all $\epsilon > 0$
		\[
		\mbb{P}(\tau_h \geq f_1(N) + \epsilon) \leq p_{N,h} \leq \mbb{P}(\tau_h \geq f_2(N) - \epsilon).
		\]
		Since the distribution of $\tau_h$ is continuous, the conclusion follows.
	\end{proof}
	\begin{proof}[Proof of Theorem \ref{High d super-critical result}]
	The theorem is now a direct consequence of Corollary~\ref{Capacity and radius}, the fact that $f_2(N) \to \infty$ as $N \to \infty$, and the following corollary to Proposition~\ref{Hitting times with drift}. For $h > 0$ and $b > 0$, the probability that $B_t > -ht - b$ for all $t$ is $1 - \exp(-2bh)$. Replacing $b = \one_{\til \phi_\0 > -h}(\til\phi_\0 + h)/\sigma^2_d$ then gives the desired result.
	\end{proof}
	\begin{proof}[Proof of Theorem \ref{High d sub-critical result}]
	In this case we use the following simple bound, which is a direct consequence of Proposition~\ref{Hitting times with drift} and the (easily checked) fact that $\bar\Phi(x)\leq \exp(-x^2/2)$ for all $x \geq 0$,
		\begin{align*}
		\mbb{P}(\tau_h \geq s \mid \mathcal F_{\0}) \leq \bar\Phi\left(h\sqrt{s} - \frac{\til\phi_\0-h}{\sigma^2_d\sqrt{s}}\right) \leq \exp\left(-\frac{h^2s}{2} + \frac{h(\til\phi_\0 - h)}{\sigma^2_d}\right).
		\end{align*}
		This gives
		\begin{align*}
		p_{N, h} \leq \mbb{P}(\tau_h \geq f_2(N)) &\leq \exp\left(-\frac{h^2 f_2(N)}{2}\right) \E\left[\exp\left(\frac{h(\til\phi_\0 - h)}{\sigma^2_d}\right)\right] \\
		&= \exp\left[-\Omega\left(\frac{h^2 N}{(\log N)^{\one_{d = 3}}}\right)\right]. \qedhere
		\end{align*}
	\end{proof}
	\begin{proof}[Proof of Theorem \ref{High d critical result}]
		As in the previous cases, we apply Proposition \ref{Hitting times with drift}, Corollary~\ref{Capacity and radius}, and Lemma~\ref{Capacity bounds} to obtain
		\begin{align*}
		p_{N,0} &\geq \mbb{P}(\tau_0 \geq f_1(N))\\
		&= \mbb{E}\left[\left(\Phi\left(\frac{\til\phi_\0}{\sigma^2_d\sqrt{f_1(N)}}\right) - \Phi\left(-\frac{\til\phi_\0}{\sigma^2_d\sqrt{f_1(N)}} \right)\right) \one_{\til{\phi}_\0 > 0}\right]\\
		&= \Omega\left( \frac{1}{N^{d/2-1}}\right).
		\end{align*}
		The upper bound follows by the same reasoning
		\begin{align*}
		p_{N,0} &\leq \mbb{P}(\tau_0 \geq f_2(N))\\
		&= \mbb{E}\left[\left(\Phi\left(\frac{\til\phi_\0}{\sigma^2_d\sqrt{f_2(N)}}\right) - \Phi\left(-\frac{\til\phi_\0}{\sigma^2_d\sqrt{f_2(N)}} \right)\right) \one_{\til{\phi}_\0 > 0}\right]\\
		&= O\left( \frac{\sqrt{\log N}^{\one_{d=3}}}{\sqrt{N}}\right).
		\end{align*}
	\end{proof}
	
	\subsection{Critical window in three dimensions}
	In this section we prove Theorem \ref{3D critical window result}. That is, we give rates of decay for $h$ (now considered as a function of $N$) such that $p_{N, \pm h}$ is of the same order as $p_{N,0}$. We will only consider the case $d = 3$ in this subsection. Throughout, we let $h_N > 0$ be a sequence such that $h_N \to 0$. To simplify notation, we will write $\sigma^2$ for $\sigma^2_3$, $p_N^\pm$ for $p_{N,\pm h_N}$, and similarly with other quantities. Additionally, we write $(a)_+$ for $\max\{a,0\}$ and will use $(\til{\phi}_\0 \mp h_N)_+$ instead of $(\til{\phi}_\0 \mp h_N)$ when applying Proposition~\ref{Law of capacity} to avoid writing $\one_{\til{\phi}_\0 > \pm h_N}$ when taking expectations.
	
	We first prove \eqref{eq-p+-lower}. Letting $b = (\til{\phi}_\0 - h_N)_+/\sigma^2$, we have from Proposition~\ref{Capacity and radius} and Proposition~\ref{Hitting times with drift}
	\begin{align*}
	p_N^+ &\geq \E\left[\bar{\Phi}\left(h_N\sqrt{f_1(N)} - \frac{b}{\sqrt{f_1(N)}}\right) - e^{2h_Nb}\bar{\Phi}\left(h_N\sqrt{f_1(N)} + \frac{b}{\sqrt{f_1(N)}}\right) \right]\numberthis \label{eq-critical-window-p-+}.
	\end{align*}
	To bound the right hand side of the preceding inequality we use the following  lemma.
	\begin{lemma}\label{Elementary hitting time bound}
		Let $x \in \mbb{R}$ and $y \geq 0$. Define $f$ and $g$ by
		\begin{align*}
		f(x,y) = \bar{\Phi}(x - y) - e^{2xy}\bar{\Phi}(x + y) \mbox{ and }
		g(x,y) = 1 - \frac{x \bar{\Phi}(x+y)}{\varphi(x+y)}.
		\end{align*}
		We have $g(x,y) > 0$ and
		\begin{equation}\label{eq-analysis-inequality}
		\begin{split}
		2g(x,0)\left[\Phi(x) - \Phi(x-y)\right]\leq f(x,y) &\leq 2g(x,y)\left[\Phi(x) - \Phi(x-y)\right] \quad x \geq 0,\\
		2g(x,y)\left[\Phi(x) - \Phi(x-y)\right]\leq f(x,y) &\leq 2g(x,0)\left[\Phi(x) - \Phi(x-y)\right] \quad x \leq 0.
		\end{split}
		\end{equation}
	\end{lemma}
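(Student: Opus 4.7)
The plan is to express $f(x,y)$ as an integral, compare pointwise with the integrand of $\Phi(x) - \Phi(x-y) = \int_0^y \varphi(x-t)\,dt$, and then exploit monotonicity of $g(x,\cdot)$ to pull out the extremal value.

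The key computation is a differentiation in $y$ at fixed $x$. Since $(x+y)^2 - 4xy = (x-y)^2$, one has the identity $e^{2xy}\varphi(x+y) = \varphi(x-y)$, and a direct differentiation gives
\[
\partial_y f(x,y) = \varphi(x-y) - 2x e^{2xy}\bar\Phi(x+y) + e^{2xy}\varphi(x+y) = 2\varphi(x-y)\,g(x,y),
\]
where the last simplification again uses $e^{2xy}\varphi(x+y) = \varphi(x-y)$ to rewrite $e^{2xy}/\varphi(x-y) = 1/\varphi(x+y)$. Since $f(x,0) = 0$, integrating yields $f(x,y) = 2\int_0^y \varphi(x-t)\,g(x,t)\,dt$, which is already structurally parallel to $2[\Phi(x) - \Phi(x-y)] = 2\int_0^y \varphi(x-t)\,dt$ with $g(x,t)$ in place of $1$.

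The positivity $g(x,y)>0$ is immediate when $x\leq 0$, and when $x>0$ follows from the standard Mills ratio bound $\bar\Phi(z) < \varphi(z)/z$ at $z = x+y > 0$. The heart of the proof is showing that $t \mapsto g(x,t)$ is monotone for fixed $x$, nondecreasing when $x\geq 0$ and nonincreasing when $x\leq 0$. Using $\varphi'(z) = -z\varphi(z)$, a routine computation gives
\[
\partial_t g(x,t) = x\Bigl[1 - (x+t)\tfrac{\bar\Phi(x+t)}{\varphi(x+t)}\Bigr].
\]
The bracketed factor is strictly positive for every real $x+t$: when $x+t>0$ this is Mills ratio again, and when $x+t\leq 0$ it is trivial since $(x+t)\bar\Phi(x+t)/\varphi(x+t)\leq 0 < 1$. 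Thus $\operatorname{sign}(\partial_t g(x,t)) = \operatorname{sign}(x)$, giving the desired monotonicity.

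With monotonicity in hand, the two inequalities in \eqref{eq-analysis-inequality} follow by replacing $g(x,t)$ inside $\int_0^y \varphi(x-t)g(x,t)\,dt$ with its minimum (respectively maximum) value on $[0,y]$, which is $g(x,0)$ or $g(x,y)$ according to the sign of $x$. The only minor subtlety is checking the Mills ratio inequality $(x+t)\bar\Phi(x+t) < \varphi(x+t)$ for $x+t > 0$, which is classical (for example, by integration by parts of $\int_{x+t}^\infty \varphi$). No step should present a real obstacle; the main thing to get right is the algebraic identity $e^{2xy}\varphi(x+y)=\varphi(x-y)$ that makes $\partial_y f$ factor through $g$, and organizing the case split cleanly at the end.
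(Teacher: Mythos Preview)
Your proposal is correct and follows essentially the same approach as the paper: compute $\partial_y f(x,y) = 2g(x,y)\varphi(x-y)$ via the identity $e^{2xy}\varphi(x+y)=\varphi(x-y)$, establish that $g(x,\cdot)$ is monotone with sign determined by $x$ (the paper phrases this as ``$\bar\Phi/\varphi$ is decreasing'' rather than computing $\partial_t g$ explicitly, but these are equivalent), and then integrate with $g$ replaced by its extremal value on $[0,y]$. The positivity of $g$ and the Mills ratio input are handled identically.
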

	\begin{proof}
		It is clear that $g(x,y) > 0$ for $x \leq 0$. For $x > 0$, the fact that $g(x,0) > 0$ is equivalent to the well-known (and straightforward to check) bound $\bar \Phi(x) < \varphi(x)/x$ for all $x > 0$ and it directly implies $g(x,y) \geq g(x+y,0) > 0$. 
		
		To prove \eqref{eq-analysis-inequality}, note that
		\[
		\frac{\partial f}{\partial y} (x,y) = 2\varphi(x-y) - 2x e^{2xy} \bar{\Phi}(x+y) = 2g(x,y)\varphi(x-y).
		\] 
		Using the fact that $\bar{\Phi}(x)/\varphi(x)$ is decreasing in $x$ (for all values of $x$) we conclude that $g(x,y)$ is decreasing in $y$ for $x < 0$ and increasing in $y$ for $x > 0$. The desired bounds follow by integrating $\partial f/ \partial y$. For instance, for $x > 0$ we have
		\begin{align*}
		f(x,y) = \int_0^y \frac{\partial f}{\partial y}(x,s)ds &\leq 2g(x,y) \int_0^y \varphi(x-s)ds \\
		&= 2g(x,y)\left[\Phi(x) - \Phi(x-y)\right]\,.
		\end{align*}
		The other three bounds follow by similar arguments.
	\end{proof}
	Note now that $h_N = O(N^{-1/2})$ implies $h_N\sqrt{f_1(N)} = O(1)$ and therefore $g(h_N \sqrt{f_1(N)},0) = \Omega(1)$. Recalling \eqref{eq-critical-window-p-+} and applying  Lemma~\ref{Elementary hitting time bound} with $x = h_N\sqrt{f_1(N)}$ and $y = b/\sqrt{f_1(N)}$ gives
	\begin{align*}
	p_N^+ &\geq 2g(h_N\sqrt{f_1(N)},0)\left(\Phi\left(h_N\sqrt{f_1(N)}\right) - \mbb{E}\left[\Phi\left(h_N\sqrt{f_1(N)} - \frac{b}{\sqrt{f_1(N)}}\right) \right]\right)\\
	&= \Omega\left(\frac{1}{\sqrt{N}}\right)\,,
	\end{align*}
	which proves \eqref{eq-p+-lower}.
	
	We turn next to \eqref{eq-p+-upper}. We let $b$ be as above, and note the trivial bounds $g(x,y) \leq 1$ and $\Phi(x) - \Phi(x-y) \leq y \varphi((x-y)_+)$ which are valid for all $x,y \geq 0$. Combining this with Lemma~\ref{Elementary hitting time bound} we obtain
	\begin{align*}
	p_N^+ &\leq \mbb{P}(\tau_{N}^+ \geq f_2(N))
	\\&\leq \E\left[ \frac{2b}{\sqrt{f_2(N)}}\varphi\left(\left(h_N\sqrt{f_2(N)}-\frac{b}{\sqrt{f_2(N)}}\right)_+\right)\right] \\
	&\leq \frac{\exp(-h_N^2f_2(N)/2)}{\sqrt{f_2(N)}}\E\left[b e^{h_Nb} \right].
	\end{align*}
	Recalling $f_2(N) = \Omega(N/\log N)$, we have under the assumption
	$
	\liminf_{N \to \infty} \frac{h_N \sqrt{N}}{\sqrt{\log N \log \log N}} \geq C
	$
	for a large enough constant $C$, that $e^{-h_N^2f_2(N)/2} = o(\sqrt{\log N})$. This gives
	$
	p_N^+ = o\left(\frac{1}{\sqrt{N}}\right)
	$ as required for \eqref{eq-p+-upper}.
	
	The bounds on $p_N^-$ are obtained by a similar argument. Let $b = (\til{\phi}_\0 + h_N)_+/\sigma^2$, and note that for $x,y \geq 0$ we have $g(-x,0) \leq 1 + x/\varphi(x)$ and $\Phi(-x) - \Phi(-x-y) \leq y \varphi(x)$. Applying Lemma~\ref{Elementary hitting time bound} with $x = h_N\sqrt{f_2(N)}$ and $y = b/\sqrt{f_2(N)}$ this gives
	\begin{align*}
	p_N^- &\leq \mbb{P}(\tau_{N}^- \geq f_2(N)) \\
	&\leq 2[\varphi(h_N\sqrt{f_2(N)}) + h_N\sqrt{f_2(N)}] \E\left[\frac{b}{\sqrt{f_2(N)}}\right]\\
	&= O(h_N)\,.
	\end{align*}
	Recalling $f_2(N) = \Omega(N/\log(N))$, we see that if $h_N = O(\sqrt{\log N/N})$, then the above implies
	$
	p_N^- = O\left(\sqrt{\frac{\log N}{N}}\right)
	$, as required for \eqref{eq-p--upper}.
	
	Conversely, we have for $x, y \geq 0$ that $g(-x,y) \geq x\bar{\Phi}(-x+y)/\varphi(-x+y)$ and $\Phi(-x) - \Phi(-x-y) \geq y \varphi(x+y)$, so Lemma~\ref{Elementary hitting time bound} gives
	\[
	\bar{\Phi}(-x - y) - e^{-2xy}\bar{\Phi}(-x + y) \geq 2xy e^{-2xy} \bar\Phi(-x+y).
	\]
	Letting $x = h_N\sqrt{f_1(N)}$ and $y = b/\sqrt{f_1(N)}$ we obtain
	\begin{align*}
	p_N^- &\geq \mbb{P}(\tau_N^- \geq f_1(N))\\
	&\geq \E\left[2h_Nb e^{-2h_Nb} \bar{\Phi}\left(-h_N\sqrt{f_1(N)} + \frac{b}{\sqrt{f_1(N)}}\right)\right]\,.
	\end{align*}
	It follows that if $h_N = o(1)$ and $h_N = \Omega(N^{-1/2})$, $p_N^- = \Omega(h_N)$. In particular, $h_N = \omega(\sqrt{\log N/N})$ implies $p_N^- = \omega(\sqrt{\log N/N})$, as required for \eqref{eq-p--lower}
	
\subsection{Proof of technical lemmas}\label{High d technical proofs}
\begin{proof}[Proof of Lemma \ref{QV convergence}]
	By \cite[Theorem 4.3.1]{LawlerLimic10}, there exists a constant $c_d$ such that the following holds
	\[
	G(\0,x) = \frac{c_d}{|x|^{d-2}} + O\left(\frac{1}{|x|^d}\right).
	\]
	Since $G(x,y) = G(\0,y-x)$ for $x,y \in \mbb{Z}^d$, we can deduce that for any $v \in V_N$
	\[
	G(v,x_K) - G(\0,x_K) = G(\0,x_K) O\left(\frac{|v|}{K}\right) \quad \text{as} \quad K \to \infty.
	\]
	By this, we mean that the suppressed constant does not depend on $v$ or $K$. We note that by \eqref{Metric Green Interpolation} this bound extends to $v \in \ctG \cap [-N,N]^d$, and in particular to points $v$ on edges incident to $\0$ (i.e. such that $|v| < 1$). From this we obtain
	\begin{align*}
	\var[\til\phi_{x_K} \mid \mc{F}_{\mc I}]= & G(x_K,x_K) - \sum_{v \in \partial \mc I} \Hm(x_K,v;\mc I) G(v,x_K)\\
	= & G(x_K,x_K) - G(\0,x_K)\Hm(x_K,\mc I) \\&+ G(\0,x_K)\sum_{v \in \partial \mc I} \Hm(x_K,v;\mc I) O\left(\frac{|v|}{K}\right).
	\end{align*}
	Similarly, we have $\var[\til\phi_{x_K} \mid \mc{F}_\0] = G(x_K,x_K) - G(\0,x_K)\Hm(x_K,\0)$. Therefore, we see that it suffices to bound the last term above by the difference of the harmonic measures. Let $\tau_\0 = \inf\{t \geq 0 \,:\, \til{B}_t = \0\}$ be the hitting time of $\0$ by a metric graph Brownian motion. We have
	\begin{align*}
	\Hm(x_K,\mc I) - \Hm(x_K, \0) &= \sum_{v \in \partial \mc I} \Hm(x_K,v;\mc I)\mbb{P}_v(\tau_\0 = \infty) \\
	&= \sum_{v \in \partial \mc I} \Hm(x_K,v;\mc I)\Omega(|v| \wedge 1),
	\end{align*}
	where we have used the fact that there exists a constant $p_d > 0$ such that $\mbb{P}_v(\tau_\0 = \infty) \geq p_d$ for all $v \in \mbb{Z}^d \setminus \{\0\}$. This concludes the proof.
\end{proof}
\begin{proof}[Proof of Lemma \ref{Capacity bounds}]
	Both bounds are proved by similar arguments so we only provide the details for the bound on $f_2(N)$. First, note that $\Hm(x_K,\0)/G(\0,x_K) =  1/\sigma_d^2$ so it suffices to show $\Hm(x_K,\mc I)/G(\0,x_K) = \Omega(N/(\log N)^{\one_{d = 3}})$ uniformly over compact connected sets $\mc I$ containing $\0$ and intersecting $\partial V_N$ and $K$ large enough (say $K \geq 100 N$). Begin by noting that any such $\mc I$ contains a set $\mc U = \{u_j\}_{j = 0}^N$ where $u_j \in \mbb{Z}^d$ and $|u_j|_{\infty} = j$ so it suffices to lower bound $\Hm(x_K, \mc U)$. To this end, let $Y$ be the number of visits to $\mc U$ by a random walk started at $x_K$. We have 
	\[
	\Hm(x_K, \mc U) = \mbb{P}(Y > 0) = \frac{\E[Y]}{\E[Y\mid Y > 0]}.
	\]
	By \cite[Theorem 4.3.1]{LawlerLimic10}, we have 
	\[
	\E[Y] = \sum_{u \in \mc U} G(u,x_K) = \Omega\left(N\right) G(\0,x_K)
	\]
	On the other hand, letting $\tau_{\mc U} = \inf\{t \geq 0 \,:\, \til B_t \in \mc U\}$ be the hitting time of $\mc U$, the following holds uniformly over $u \in \mc U$
	\begin{align*}
	\E[Y \mid \til B_{\tau_{\mc U}} = u] &= \sum_{u' \in \mc U} G(u,u') = O\left(\sum_{j = 1}^N \frac{1}{j^{d-2}}\right) = O \left(\log(N)^{\one_{d = 3}}\right).
	\end{align*}
	Combining the two estimates gives the desired result.
\end{proof}

	\section{Chemical distance in two dimensions}\label{2D proofs}
	This section is devoted to the proof of Theorem~\ref{2D result}.  Recall that $\til{\phi}_N$ is the Gaussian free field on the metric graph of $V_N$ with Dirichlet boundary conditions, and that $G_N$ is Green's function on $V_N$ as in \eqref{Green definition}. The proof employs the same type of exploration martingale as in the case of $d \geq 3$. Below we prove Theorem~\ref{2D result} while postponing proofs of a few lemmas to later subsections.
	\begin{proof}[Proof of Theorem~\ref{2D result}]
		For $h \in \mathbb{R}$ and $0 < \alpha < \gamma < 1$, define $\mc{E}_{N,1} = \{ D_{N, h}(V_{\alpha N},\partial V_{\gamma N}) < \infty\}$. That is,  $\mc{E}_{N,1}$ is the event that $V_{\alpha N}$ is connected to $\partial V_{\gamma, N}$ in $\til{E}_{N}^{\geq h}$.
		\begin{lemma}\label{2D percolation probability}
			We have
			\[
			c_1 = \inf \{\PP(\mc{E}_{N,1}) \,:\, N \geq 1\} > 0\,,
			\]
			where $c_1$ depends on $h$, $\alpha$, and $\gamma$.
		\end{lemma}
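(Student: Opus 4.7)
The plan is to invoke the Gibbs-Markov property (Theorem~\ref{Gibbs-Markov}) to decompose the field on an intermediate-scale box into a smooth ``harmonic'' piece and an independent intrinsic GFF, then to show that the harmonic piece boosts the field above $h$ uniformly on $V_{\gamma N}$ with positive probability; this reduces the lemma to a super-critical level-set crossing estimate that is by now classical for the metric-graph GFF in two dimensions.

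Concretely, fix $\beta' = (\gamma+1)/2 \in (\gamma,1)$ and let $\ctG_{\beta' N}$ denote the metric graph associated to $V_{\beta' N}$. We apply Theorem~\ref{Gibbs-Markov} with $\cK$ equal to the closure of $\ctG_N \setminus \ctG_{\beta' N}^o$ to decompose $\til{\phi}_N|_{\ctG_{\beta' N}} = H_N + \til{\psi}_N$, where $\til{\psi}_N$ is a Dirichlet GFF on $\ctG_{\beta' N}$ independent of $H_N$, and $H_N$ is the (random) harmonic extension of $\til{\phi}_N|_{\partial V_{\beta' N}}$ into $V_{\beta' N}$. Set $A_N = \{H_N(v) \geq h+1 \text{ for every } v \in V_{\gamma N}\}$; we claim $\inf_N \PP(A_N) > 0$. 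To see this, first use $\var H_N(\mathbf{0}) = G_N(\mathbf{0},\mathbf{0}) - G_{\beta' N}(\mathbf{0},\mathbf{0})$, which by the 2D asymptotic $G_M(\mathbf{0},\mathbf{0}) = \frac{1}{\pi}\log M + O(1)$ converges to a positive constant depending only on $\beta'$; hence $\PP(H_N(\mathbf{0}) \geq C) \geq c(C) > 0$ uniformly in $N$ for any fixed $C$. Second, since $\partial V_{\beta' N}$ lies at macroscopic distance $\Theta(N)$ from $V_{\gamma N}$, interior regularity of harmonic functions (combined with a Borell-TIS-type concentration bound using the uniform pointwise control $\var H_N(v) = O(1)$ for $v \in V_{\gamma N}$) gives $\E[\max_{v \in V_{\gamma N}}|H_N(v) - H_N(\mathbf{0})|] = O(1)$ uniformly in $N$. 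Combining these two facts via the Gaussian correlation inequality then delivers the uniform lower bound on $\PP(A_N)$.

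On $A_N$, any path in $V_{\gamma N}$ along which $\til{\psi}_N \geq -1$ is automatically a path along which $\til{\phi}_N \geq h$, so by independence of $H_N$ and $\til{\psi}_N$,
\[
\PP(\mc{E}_{N,1}) \;\geq\; \PP(A_N)\cdot \PP\Big(V_{\alpha N} \stackrel{\til{\psi}_N \geq -1}{\longleftrightarrow} \partial V_{\gamma N} \text{ in } \ctG_{\beta' N}\Big).
\]
The second factor is a super-critical crossing probability for the metric-graph GFF and is bounded below uniformly in $N$, either by Lupu's sign-cluster analysis \cite{Lupu16}, by the crossing estimates already present in \cite{DingLi18}, or by iterating the same Gibbs-Markov decomposition at one further scale. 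The main obstacle we anticipate is making the uniform oscillation bound above fully precise in the metric-graph setting; the remaining pieces are essentially standard 2D GFF machinery. As an alternative, this lemma is also a direct consequence of the positive-probability chemical-distance bound of \cite{DingLi18}, so a short proof by citation is available; we prefer the decomposition above, which fits the spirit of the techniques developed in the rest of the paper.
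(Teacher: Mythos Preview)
The paper does not actually prove this lemma: it simply remarks that the proof of \cite[Proposition~4]{DingLi18} adapts with essentially no change to the metric-graph setting, and omits further details. Your closing sentence --- that a short proof by citation to \cite{DingLi18} is available --- is therefore exactly the route the paper takes.

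Your longer Gibbs--Markov decomposition is correct in outline and is in fact the kind of argument underlying \cite[Proposition~4]{DingLi18}, so the two approaches are not genuinely different; you are just unpacking what the paper leaves packed. Two small points if you want to polish the sketch: first, in the paper's conventions the $2$D variance asymptotic is $G_N(\mathbf 0,\mathbf 0)=\tfrac{2}{\pi}\log N+O(1)$ (see \eqref{2D variance asymptotic}), not $\tfrac{1}{\pi}$. Second, invoking the Gaussian correlation inequality to combine the event $\{H_N(\mathbf 0)\ge C\}$ with the small-oscillation event is a bit awkward, since the oscillation event is two-sided rather than increasing; a cleaner route is to apply Borell--TIS directly to $\min_{v\in V_{\gamma N}}H_N(v)$, using that each $H_N(v)$ has uniformly bounded variance and that $\E\bigl[\max_{v\in V_{\gamma N}}H_N(v)\bigr]=O(1)$, which gives $\PP(A_N)\ge c>0$ in one step. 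The oscillation bound itself is standard once one notes that $\var(H_N(u)-H_N(v))=O(|u-v|^2/N^2)$ for $u,v\in V_{\gamma N}$, which follows from harmonicity of the Poisson kernel at macroscopic distance from $\partial V_{\beta' N}$.
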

		\begin{remark}
			Despite the fact that the statement of  Lemma~\ref{2D percolation probability} is formally slightly stronger than \cite[Proposition 4]{DingLi18} (since percolation on metric graph is a sub-event of percolation on discrete lattice), the proof of \cite[Proposition 4]{DingLi18} adapts with essentially no change. Thus, we omit further details of the proof.
		\end{remark}
		Now, let $\mu = (1+\gamma)/2$ and $M_{\mu N}$ be the exploration martingale with target set $\partial V_{\mu N}$ and source set $\cI_0 = V_{\alpha N}$, as defined in \eqref{eq-def-M}. That is to say,
		\begin{align*}
		X_{\mu N} = \frac{1}{|\partial V_{\mu N}|} \sum_{v \in \partial V_{\mu N}} \til{\phi}_{N,v} \mbox{ and }
		M_{\mu N,t} = \E[X_{\mu N} \mid \mc{F}_{\cI_t }]\,,
		\end{align*}
		where $\cI_t = \{v\in V_N: D_{N, h}(V_{\alpha N}, v)\leq t\}$. From now on, we take $N$ large enough that the boxes $V_{\alpha N}, V_{\beta N}, V_{\gamma N}, V_{\mu N}$ are distinct. For $t \geq 0$ we let $\partial \cI_t^+ = \partial \cI_t \cap \til{E}_{N}^{> h}$ be the points on $\partial \cI_t$ where $\til{\phi}$ is strictly above $h$, which we will refer to as the active points at time $t$ (by active here we mean that these are the points from which the metric ball exploration can proceed further), and let $\partial \cI_t^- = \partial \cI_t \setminus \til{E}_{N}^{\geq h}$ be the points on $\partial \cI_t$ where $\til{\phi}$ is strictly below $h$. We then define the ``positive'' and ``negative'' parts of $M_{\mu N}$ (which we denote by $M_{\mu N}^{\pm}$) as
		\[
		M_{\mu N,t}^{\pm} = \sum_{u \in \partial \cI_t^{\pm}} \Hm_{N,t}(\partial V_{\mu N},u)(\til{\phi}_u - h)\,,
		\]
		where $\Hm_{N,t}(v,u) = \Hm(v,u \,;\, \cI_t \cup \partial V_N)$. We note that $\partial \cI_t^- = \partial \cI_0^-$ for all $t$, which, combined with the fact $\mc I_t$ is increasing, implies $M_{\mu N}^-$ is increasing. For $c\in \mathbb R$, define 
		$$\mc{E}_{N,2}(c) = \{M_{\mu N,t}^+ \geq c \,\, \mbox{ for all } 0 \leq t \leq D_{N, h}(V_{\alpha N}, \partial V_{\beta N})\}\,.$$
		\begin{lemma}\label{Positive part lemma}
			There exists a constant $c_2 = c_2(h, \beta, \gamma)> 0$ such that
			\[
			\PP(\mc{E}_{N,2}(c_2 \epsilon) \mid \mc{E}_{N,1}) \geq 1 - \epsilon \mbox{  for all } \epsilon > 0 .
			\]
		\end{lemma}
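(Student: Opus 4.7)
The approach is to dominate the conditional percolation probability by $M^+_{\mu N, t}$ and then conclude via a direct optional-stopping argument on the Doob martingale $Y_t := \PP(\mc E_{N,1} \mid \mc F_{\cI_t})$.

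The central step, which I expect to be the main obstacle, is a quantitative domination bound: there exists $C = C(h, \alpha, \beta, \gamma) > 0$ such that for every $t \leq \eta := D_{N, h}(V_{\alpha N}, \partial V_{\beta N})$,
\[
Y_t = \PP\bigl(\mc E_{N,1} \mid \mc F_{\cI_t}\bigr) \;\leq\; C \cdot M^+_{\mu N, t}.
\]
To prove this, apply the Gibbs--Markov property (Theorem~\ref{Gibbs-Markov}): conditional on $\mc F_{\cI_t}$, the field on $\ctG_N \setminus \cI_t$ is an independent Dirichlet GFF plus the harmonic extension of the revealed boundary values of $\til\phi_N$. On $\{t \leq \eta\}$ the active frontier $\partial \cI_t^+$ lies in $V_{\beta N}$, so $\mc E_{N,1}$ forces a crossing of the annulus $V_{\mu N} \setminus V_{\beta N}$ in the super-level set $\{\til\phi_N \geq h\}$ of this conditional field. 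Since every such crossing must pass through some point of the discrete circle $\partial V_{\mu N}$, a first-moment argument over $\partial V_{\mu N}$ (using Gaussian tails together with the fact that conditional variances on $\partial V_{\mu N}$ are bounded) yields a crossing probability at most a constant multiple of the average positive excess of the conditional mean over $\partial V_{\mu N}$; by definition of harmonic measure this average is exactly $M^+_{\mu N, t}$.

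Given the domination, define the stopping time $\tau := \inf\{t \geq 0 : M^+_{\mu N, t} \leq c_2 \epsilon\}$. The process $M^+_{\mu N, t}$ is continuous, inherited from the continuity of $M_{\mu N, t}$ in Lemma~\ref{Continuity lemma} together with the continuity of $M^-_{\mu N, t}$ and $\pi_{\mu N, t}$ (since on the metric graph any new point added to $\partial \cI_t$ for $t > 0$ must satisfy $\til\phi_N = h$, so $\partial \cI_t^-$ is constant in $t$ and the weights $\Hm_{N,t}$ evolve continuously). Thus $\mc E_{N,2}(c_2 \epsilon)^c \subset \{\tau \leq \eta\}$ with $M^+_{\mu N, \tau} \leq c_2 \epsilon$ on the latter event, and the tower property combined with the domination gives
\[
\PP\bigl(\mc E_{N,1}, \mc E_{N,2}(c_2 \epsilon)^c\bigr) \leq \E\bigl[Y_\tau \, \one_{\tau \leq \eta}\bigr] \leq C c_2 \epsilon.
\]

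Finally, Lemma~\ref{2D percolation probability} supplies $\PP(\mc E_{N,1}) \geq c_1 > 0$, so $\PP(\mc E_{N,2}(c_2 \epsilon)^c \mid \mc E_{N,1}) \leq C c_2 \epsilon / c_1$, and the choice $c_2 = c_1/C$ yields the required bound $\epsilon$. The real technical difficulty lives in the domination inequality of the first step: upper bounds on GFF level-set crossing probabilities in terms of boundary data are notoriously subtle, and one must carefully exploit that the crossing must be sustained across the annulus $V_{\mu N} \setminus V_{\beta N}$, where the harmonic measure from $\partial V_{\mu N}$ precisely recovers the weights appearing in the definition of $M^+_{\mu N, t}$.
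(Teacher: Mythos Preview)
Your reduction to a pointwise domination inequality $Y_t \leq C\,M^+_{\mu N,t}$ followed by optional stopping is a clean way to package the argument, and the optional-stopping step is fine. The gap is entirely in the domination step, and your sketched justification does not work.

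First, a geometric slip: recall $\alpha<\beta<\gamma<\mu$, so the event $\mc E_{N,1}$ only forces a crossing to $\partial V_{\gamma N}$, which lies \emph{inside} $V_{\mu N}$. There is no reason the $\geq h$ path must touch $\partial V_{\mu N}$, so a union bound over $\partial V_{\mu N}$ does not capture the crossing. Second, even over the correct circle, a first-moment (union) bound produces a \emph{sum} of one-point connection probabilities, not an average; you would pick up an extra factor of order $N$ that cannot be absorbed. Third, the conditional variance of $\til\phi_{N,v}$ given $\mc F_{\cI_t}$ at $v\in\partial V_{\mu N}$ is $G_{N,t}(v,v)$, which for $\cI_t\subset V_{\beta N}$ is of order $\log N$, not $O(1)$; so ``Gaussian tails with bounded variance'' is not available. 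Finally, the ``average positive excess of the conditional mean'' over $\partial V_{\mu N}$ is not $M^+_{\mu N,t}$: the latter isolates contributions from the active boundary $\partial\cI_t^+$, whereas the former mixes in the (possibly large negative) contribution from $\partial\cI_t^-$ before taking a positive part.

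The paper's route avoids estimating a crossing probability altogether. It works purely at the level of the exploration martingale: at the first time $\eta_x$ that $M^+_{\mu N}$ drops to $x$, one has $M_{\mu N,t}-M_{\mu N,\eta_x}\geq ch\big(\langle M_{\mu N}\rangle_t-\langle M_{\mu N}\rangle_{\eta_x}\big)-x$ for $\eta_x\leq t<D_{N,h}(V_{\alpha N},\partial V_{\gamma N})$, because the negative part $M^-_{\mu N}$ is monotone and $\langle M_{\mu N}\rangle$ is comparable to $\pi_{\mu N}$. On $\mc E_{N,1}$ the remaining quadratic variation after $\eta_x$ is bounded below by a constant (the exploration still has to traverse the annulus $V_{\gamma N}\setminus V_{\beta N}$). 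A Dambis--Dubins--Schwarz time change then reduces the question to the probability that a standard Brownian motion stays above the line $t\mapsto cht-x$ for at least a fixed amount of time, and Proposition~\ref{Hitting times with drift} gives this probability as $O(x)$. This is exactly the inequality $\PP(\mc E_{N,1}\cap\mc E_{N,2}(x)^c)\leq c''x$ you are after, obtained without any first-moment bound on crossing events.
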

		For the rest of the section we let $\mc{E}_{N,2} = \mc{E}_{N,2}(c_2 \epsilon/2)$ for convenience. The core idea in proving Theorem~\ref{2D result} is to bound from below the rate at which the quadratic variation increases as a function of $M_{\mu N,t}^+$. Combined with an upper bound on $\langle M_{\mu N}\rangle_{D_{N, h}(V_{\alpha N}, \partial V_{\beta N})}$, this then yields an upper bound on $D_{N, h}(V_{\alpha N}, \partial V_{\beta N})$. In order to carry out the proof,
		we first give the upper bound on the quadratic variation of $M_{\mu N}$ (which is  easier than the lower bound). 
		\begin{lemma}\label{Useful 2D Green bounds}\cite[Lemma 2]{DingLi18}
			For $0 < \mu < 1$, there exist constants $c, c' > 0$ such that
			\begin{align}
			&\sum_{v \in \partial V_{\mu N}} G_N (u,v) \leq c N, \quad \forall u \in \partial V_{\mu N}; \label{Time spent on the boundary} \\
			&G_N(u,v) \geq c', \quad \forall u,v \in V_{\mu N}. \label{Green lower bound}
			\end{align}
		\end{lemma}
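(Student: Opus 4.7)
The plan is to reduce both bounds to standard asymptotics for the Green's function of simple random walk on a two-dimensional box. Throughout, I would use the potential kernel $a(x)$ of simple random walk on $\ZZ^2$, which satisfies $a(x)=\frac{2}{\pi}\log |x|+\kappa+O(|x|^{-2})$ as $|x|\to\infty$, and the representation $G_N(u,v)=\E_u[a(S_\zeta - v)]-a(u-v)$ obtained from optional stopping applied to the martingale $a(S_t-v)-t\cdot \mathbf{1}_{S_t=v}$ (up to the $1/(2d)$ conductance convention of the paper, which only affects constants).

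For \eqref{Time spent on the boundary}, the first step is to show the pointwise upper bound $G_N(u,v)\leq C\log\bigl(N/(|u-v|\vee 1)\bigr)$ for all $u,v\in V_{\mu N}$. This follows from the potential-kernel representation together with $|S_\zeta - v|\leq 2\sqrt{2}N$ and $|S_\zeta-v|\geq (1-\mu)N$ (since $v\in V_{\mu N}$ and $S_\zeta\in\partial V_N$), so the difference $a(S_\zeta - v) - a(u-v)$ is bounded by the advertised logarithm up to a constant. The second step is a geometric summation: since $\partial V_{\mu N}$ is a one-dimensional subset of $\ZZ^2$, for every $u\in\partial V_{\mu N}$ there are only $O(1)$ vertices of $\partial V_{\mu N}$ at graph distance $r$ from $u$, for each $1\leq r \leq CN$. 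Hence
\[
\sum_{v\in\partial V_{\mu N}} G_N(u,v) \;\leq\; C\sum_{r=1}^{CN} \log(N/r) \;=\; O(N),
\]
by Stirling, giving \eqref{Time spent on the boundary}.

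For \eqref{Green lower bound}, I would use the last-exit decomposition $G_N(u,v)=\PP_u(\tau_v<\zeta)\, G_N(v,v)$ and lower-bound both factors. For the diagonal term, $G_N(v,v)\geq c\log N$ for every $v\in V_{\mu N}$, since the walk started at $v$ stays in the ball of radius $(1-\mu)N/2$ around $v$ for a time of order $N^2$ with positive probability, during which time the expected number of visits to $v$ is of order $\log N$ (by the usual 2D local central limit theorem estimate). For the hitting probability, a standard Beurling/harmonic-measure estimate in a macroscopic 2D box gives $\PP_u(\tau_v<\zeta)\geq c/\log N$ for all $u,v\in V_{\mu N}$: starting from $u$, with positive probability the walk enters a small but macroscopic ball around $v$ while remaining in $V_N$, and from inside such a ball, the logarithmic capacity of $\{v\}$ forces a hitting probability of order $1/\log N$ before exit. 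Multiplying these two bounds yields $G_N(u,v)\geq c'>0$, uniformly over $u,v\in V_{\mu N}$.

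The only mild obstacle is keeping constants uniform in the geometry: the hitting-probability estimate must hold uniformly across all pairs $u,v\in V_{\mu N}$, including pairs with $|u-v|$ as large as $2\sqrt{2}\,\mu N$, where one cannot simply invoke a local estimate. This is handled by covering $V_{\mu N}$ with finitely many overlapping sub-boxes and applying the connectivity of the walk in each, or by directly applying the explicit asymptotic $G_N(u,v)=\tfrac{2}{\pi}\log((1-\mu)N/(|u-v|\vee 1))+O(1)$, which is valid uniformly for $u,v$ at distance $\Omega(N)$ from $\partial V_N$ and is itself a consequence of the potential-kernel representation used in the upper bound.
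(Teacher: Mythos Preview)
The paper does not prove this lemma: it is quoted directly from \cite[Lemma~2]{DingLi18} and no argument is supplied. Your proposal is a correct and standard derivation of both estimates from the potential-kernel representation $G_N(u,v)=\E_u[a(S_\zeta-v)]-a(u-v)$ together with the asymptotic $a(x)=\tfrac{2}{\pi}\log|x|+\kappa+O(|x|^{-2})$.

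For \eqref{Time spent on the boundary}, the pointwise bound $G_N(u,v)\le C\log\bigl(N/(|u-v|\vee1)\bigr)$ and the one-dimensionality of $\partial V_{\mu N}$ (at most $O(1)$ boundary vertices at each $\ell_1$-distance $r$ from $u$) give the $O(N)$ sum via Stirling exactly as you wrote. For \eqref{Green lower bound}, the factorization $G_N(u,v)=\PP_u(\tau_v<\zeta)\,G_N(v,v)$ combined with $G_N(v,v)\ge c\log N$ and the two-step hitting estimate (invariance principle to reach a macroscopic ball around $v$ with uniformly positive probability, then a $c/\log N$ chance of hitting the single site $v$ before exit) is the clean way to get a uniform positive lower bound.

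One small caution on your closing remark: the direct asymptotic $G_N(u,v)=\tfrac{2}{\pi}\log\bigl((1-\mu)N/|u-v|\bigr)+O(1)$ is not by itself a positive lower bound when $\mu$ is close to $1$ and $|u-v|$ is near $2\sqrt{2}\mu N$, since the main term can then be negative and the $O(1)$ is two-sided. To extract a positive constant from the potential-kernel formula one must control $\E_u[\log|S_\zeta-v|]-\log|u-v|$ via the invariance principle for the exit distribution, which is essentially the same input as your hitting-probability argument; so that route is the right one to lead with.
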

		By \eqref{Time spent on the boundary} and Corollary~\ref{QV form}, we get that for some constant $c_3 = c_3(\mu) > 0$,
		\begin{equation}\label{2D total QV upper bound}
		\langle M_{\mu N}\rangle_{\infty} \leq \frac{1}{|\partial V_{\mu N}|^2} \sum_{v,v' \in \partial V_{\mu N}} G(v,v') \leq c_3.
		\end{equation}
		The remaining main task for proving Theorem~\ref{2D result} is to show that on some event  $\mathcal E^*_N$ with $\PP(\mathcal E^*_N \mid \mathcal E_{N,1}) \geq 1-\epsilon$, we have
		\begin{equation}\label{eq-remaining-main-task}
		\langle M_{\mu N}\rangle_{D_{N, h}(V_{\alpha N}, \partial V_{\beta N})} \geq \kappa \frac{D_{N, h}(V_{\alpha N}, \partial V_{\beta N})^2}{N^2 \sqrt{\log N}}\,,
		\end{equation}
		for some constant $\kappa= \kappa(\epsilon, \alpha, \beta, \gamma, h)>0$.
		Indeed, assuming \eqref{eq-remaining-main-task}, we can then combine it with \eqref{2D total QV upper bound}, and conclude that on $\mc{E}_N^*$
		\[
		D_{N, h}(V_{\alpha N}, \partial V_{\beta N}) \leq c_3 \kappa^{-1/2} N (\log N)^{\frac{1}{4}}. 
		\]
		completing the proof of Theorem~\ref{2D result}.
		
		\medskip
		
		It remains to show \eqref{eq-remaining-main-task}. To this end, we first bound the quadratic variation from below in terms of the $\ell_2$-norm of the harmonic measure on the active points, as in the next lemma.
		\begin{lemma}\label{QV by harmonic measure L2}
			There exists a constant $c_4 > 0$ such that the following holds almost surely for all integers $K \geq 1$, 
			\[
			\langle M_{\mu N}\rangle_K  \geq c_4 \sum_{k = 1}^K \sum_{u \in \partial \cI_k^+} \Hm_{N,k}(\partial V_{\mu N},u)^2.
			\]
		\end{lemma}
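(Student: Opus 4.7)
The plan is to lower-bound the quadratic-variation increment $\langle M_{\mu N}\rangle_k - \langle M_{\mu N}\rangle_{k-1}$ over each unit step by $c_4 \sum_{u \in \partial \cI_k^+} \Hm_{N,k}(\partial V_{\mu N}, u)^2$ and then telescope. By Corollary~\ref{QV form} and \eqref{conditional variance}, the increment equals $|\partial V_{\mu N}|^{-2} \sum_{v, v' \in \partial V_{\mu N}} [G_{N,k-1}(v, v') - G_{N,k}(v, v')]$. Applying the strong Markov property at the first hitting time of $\cI_k \cup \partial V_N$, and using that $G_{N,k-1}$ vanishes on $\cI_{k-1}$, this inner difference equals $\sum_{w \in \partial \cI_k \setminus \cI_{k-1}} \Hm_{N,k}(v, w)\, G_{N,k-1}(w, v')$. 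Averaging both arguments over $\partial V_{\mu N}$ yields
$$\langle M_{\mu N}\rangle_k - \langle M_{\mu N}\rangle_{k-1} = \sum_{w \in \partial \cI_k \setminus \cI_{k-1}} \Hm_{N,k}(\partial V_{\mu N}, w) \cdot \bar G_{N,k-1}(w),$$
where $\bar G_{N,k-1}(w) := |\partial V_{\mu N}|^{-1} \sum_{v' \in \partial V_{\mu N}} G_{N,k-1}(w, v')$.

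For $w \in \partial \cI_k \setminus \cI_{k-1}$, the strong Markov property applied at the first hitting time $\tau_w$ of $w$, together with the symmetry of $G_{N,k-1}$, gives $G_{N,k-1}(w, v') = \PP_{v'}(\tau_w < \tau_{\cI_{k-1} \cup \partial V_N}) \cdot G_{N,k-1}(w, w)$. Since $\cI_{k-1} \subset \cI_k$ and $w \notin \cI_{k-1}$, any walk from $v'$ that first hits $\cI_k \cup \partial V_N$ at $w$ must visit $w$ strictly before hitting $\cI_{k-1} \cup \partial V_N$, so $\PP_{v'}(\tau_w < \tau_{\cI_{k-1} \cup \partial V_N}) \geq \Hm_{N,k}(v', w)$. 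Averaging over $v'$ gives $\bar G_{N,k-1}(w) \geq G_{N,k-1}(w, w) \cdot \Hm_{N,k}(\partial V_{\mu N}, w)$, and plugging back in,
$$\langle M_{\mu N}\rangle_k - \langle M_{\mu N}\rangle_{k-1} \geq \sum_{w \in \partial \cI_k^+} G_{N,k-1}(w, w) \cdot \Hm_{N,k}(\partial V_{\mu N}, w)^2.$$

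The main obstacle is then to produce an absolute positive constant $c_4$ with $G_{N,k-1}(w, w) \geq c_4$ uniformly over active boundary points $w \in \partial \cI_k^+$. The key geometric input is that the conditions $D_{N,h}(\cI_0, w) = k$ and $\til{\phi}_{N, w} > h$ together force the existence of a path in $\til{E}_{N}^{\geq h}$ of length exactly $1$ from some $w' \in \partial \cI_{k-1}$ to $w$, so the portion of the killing set $\cI_{k-1}$ directly in front of $w$ along the incoming path lies at distance $1$ from $w$. Restricting metric-graph Brownian motion from $w$ to excursions along the terminal edge of this path---using the interpolation formula \eqref{Metric Green Interpolation} when $w$ lies in the interior of an edge, and the identification of the vertex local time with the discrete killed Green's function when $w$ is a lattice vertex---lower-bounds $G_{N,k-1}(w, w)$ by the Green's function of a one-dimensional Brownian motion on an interval of length at most $d$ with killing at a point at distance $1$ from $w$, which is a positive absolute constant independent of $N$, $k$, $w$ and of the field. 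Summing the resulting bounds over $k = 1, \ldots, K$ then yields the lemma.
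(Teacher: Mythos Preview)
Your decomposition of the quadratic-variation increment and the bound
\[
\langle M_{\mu N}\rangle_k - \langle M_{\mu N}\rangle_{k-1} \geq \sum_{w \in \partial \cI_k^+} G_{N,k-1}(w, w)\, \Hm_{N,k}(\partial V_{\mu N}, w)^2
\]
are correct and match the paper exactly. The gap is in the final step: there is no uniform positive lower bound on $G_{N,k-1}(w,w)$ for $w \in \partial \cI_k^+$. Your argument only controls the position of $\cI_{k-1}$ along the \emph{incoming} geodesic edge; on the other edges incident to $w$, the set $\cI_{k-1}$ may come arbitrarily close. Concretely, suppose that on some edge $e'$ incident to $w$ the field dips below $h$ on a very short interval near $w$, and that a point just beyond that dip (at $\ell_1$-distance $\delta$ from $w$) lies in $\cI_{k-1}$, having been reached via a different route. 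Then $D_{\ell_1}(w,\cI_{k-1}) \leq \delta$, and $G_{N,k-1}(w,w) = O(\delta)$ can be made as small as one likes. ``Restricting to excursions along the terminal edge'' does not salvage this: the metric-graph Brownian motion from $w$ will wander onto $e'$ and be killed near $w$ before accumulating much local time, regardless of how favorable the good edge is. In electrical terms, a short resistor to ground on $e'$ forces the effective resistance from $w$ to $\cI_{k-1}\cup\partial V_N$ (hence $G_{N,k-1}(w,w)$) to be small, and wiring the good edge in parallel only decreases it further.

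The paper circumvents this by abandoning the attempt to bound $G_{N,k-1}(w,w)$ directly and instead building an injection $\psi$ from $\bigcup_{k=1}^K\partial\cI_k^+$ into $\mc{A}=\bigcup_{k=1}^K\mc{A}_k$. If $D_{\ell_1}(u,\cI_{k-1})\geq 1/2$ one sets $\psi(u)=u$; otherwise one lets $\psi(u)=w$ be the point of $\partial\cI_{k-1}\cap B_{\ell_1}(u,1/2)$ with the smallest $\tau_w$ (where $w\in\mc{A}_{\tau_w}$), and checks that $\Hm_{N,\tau_w}(\partial V_{\mu N},w)\geq\tfrac14\Hm_{N,k}(\partial V_{\mu N},u)$ and $G_{N,\tau_w-1}(w,w)\geq 1/2$, the latter because minimality of $\tau_w$ forces $\cI_{\tau_w-1}$ to stay at distance $\geq 1/2$ from $u$. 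Injectivity of $\psi$ then lets each term $\Hm_{N,k}(\partial V_{\mu N},u)^2$ be charged to a distinct term, already carrying a good Green's-function factor, in the full expansion of $\langle M_{\mu N}\rangle_K$.
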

		Let $\mc{I}^+ = \cup_{k = 1}^{D_{N,h}(V_{\alpha N}, \partial V_{\beta N})} \partial \mc{I}^+_k$.  Since the  sets $\{\partial \mc{I}^+_k\,:\, k \geq 1\}$ are disjoint, for  $u\in \mc{I}^+$ we can define
		\[
		W(u) = \Hm_{N,k}(\partial V_{\mu N},u),
		\]
		where $k$ is the unique positive integer such that $u \in \partial \mc{I}^+_k$. We rewrite the conclusion of Lemma~\ref{QV by harmonic measure L2} as
		\begin{equation}\label{2D QV lower bound}
		\langle M_{\mu N} \rangle_{D_N(V_{\alpha N}, \partial V_{\beta N})} \geq c_4\sum_{u \in \mc{I}^+} W(u)^2.
		\end{equation}
		In order to bound the right-hand side of \eqref{2D QV lower bound} from below by $M_{\mu N}^+$, we need some control on the empirical profile of $\{\til{\phi}_{N, v}: v\in \mc{I}^+\}$. To this end, we define
		\begin{align*}
		\mc{B}_0 &= \{u \in \mc{I}^+ \,:\, \til{\phi}_{N,u} - h \leq \sqrt{\log N}\},\\
		\mc{B}_j &= \{u \in \mc{I}^+ \,:\, 2^{j-1} \sqrt{\log N} < \til{\phi}_{N,u} - h \leq 2^j \sqrt{\log N}\}, \quad j \geq 1.
		\end{align*}
		Here the scale $\sqrt{\log N}$ is chosen to match the order of $\sqrt{\E[\til{\phi}_u^2]}$ for $u \in V_{\beta N}$. Letting $\mathcal W_j = \sum_{u \in \mc{B}_j} W(u)$, we get from the Cauchy-Schwartz inequality that
		\begin{equation}\label{eq-L2-bound-W}
		\sum_{u \in \mc{I}^+} W(u)^2 \geq \sum_{j = 0}^\infty \frac{\mathcal W_j^2}{|\mc{B}_j|}\,,
		\end{equation}
		where we use the convention $0/0 = 0$. The appearance of $|\mc{B}_j|$ in the denominator in the preceding inequality calls for an upper bound on $|\mc{B}_j|$, as incorporated in the next lemma (the reason for the specific form of the bound will be made clear below). 
		\begin{lemma}\label{Size of extreme connected component}
			Let $f$ be a function on the positive integers such that $f(j) = e^{O(j)}$. Then there exists $c_5 = c_5(\alpha,\beta,h, f)> 0$ such that
			\[
			\sup \{f(j) \E[|\mc{B}_j|] \,:\, j \geq 0\} \leq c_5\frac{N^2}{\sqrt{\log N}}\,.
			\]
		\end{lemma}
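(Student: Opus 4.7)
I would prove this lemma by linearity of expectation combined with a case analysis in $j$. The starting point is that $\mc{I}^+ \subseteq V_{\beta' N}$ for some $\beta < \beta' < \gamma$, since the chemical distance dominates the Euclidean distance (this may require conditioning on a high-probability upper bound on $D_{N,h}(V_{\alpha N},\partial V_{\beta N})$, which one absorbs into the $\epsilon$-loss in Theorem~\ref{2D result}). It follows that
\[
\E[|\mc{B}_j|] \leq \sum_{u \in V_{\beta' N}} \PP\bigl(u \in \mc{I}^+,\, \til{\phi}_{N,u} - h \in I_j\bigr),
\]
where $I_0 = (0, \sqrt{\log N}]$ and $I_j = (2^{j-1}\sqrt{\log N}, 2^j\sqrt{\log N}]$ for $j \geq 1$. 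For the tail regime I would use the standard estimate $\var(\til{\phi}_{N,u}) = G_N(u,u) \leq C\log N$ uniformly in $u \in V_{\beta' N}$ together with a length-times-density Gaussian bound to obtain $\PP(\til{\phi}_{N,u} - h \in I_j) \leq C 2^{j-1} \exp(-c \cdot 4^{j-1})$ for $j \geq 1$. Combining with the trivial bound $\PP(u \in \mc{I}^+) \leq 1$, summing over $u \in V_{\beta' N}$, and multiplying by $f(j) = e^{O(j)}$, the doubly-exponential decay in $j$ dominates for $j \geq J_0 = J_0(f) = \Theta(\log\log\log N)$, and yields the bound in this regime.

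For the bulk regime $j < J_0$ the pointwise Gaussian estimate gives only $\E[|\mc{B}_j|] \leq O(N^2)$, short by a factor of $\sqrt{\log N}$. The missing factor must come from the constraint $\{u \in \mc{I}^+\}$. I would integrate out $\til{\phi}_{N,u}$ to write
\[
\PP(u \in \mc{B}_j) = \int_{I_j + h} \varphi_u(s)\,\PP\bigl(u \in \mc{I}^+ \bigm| \til{\phi}_{N,u} = s\bigr)\,ds,
\]
where $\varphi_u$ is the marginal density of $\til{\phi}_{N,u}$. Since $\sigma_u^2 \geq c'\log N$ uniformly in $u \in V_{\beta' N}$ by the 2D Green's function asymptotic, $\sup_{s \in I_j + h} \varphi_u(s) \leq C/\sqrt{\log N}$, which pulls out the desired $\sqrt{\log N}^{-1}$ factor. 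It then suffices to establish the uniform bound $\PP(u \in \mc{I}^+ \mid \til{\phi}_{N,u} = s) \leq C/\sqrt{\log N}$ for $s \in I_j + h$ and $u \in V_{\beta' N}$. Summing over $u$ then gives $\E[|\mc{B}_j|] \leq CN^2/\sqrt{\log N}$, and since only $J_0 = \Theta(\log\log\log N)$ many values of $j$ remain, the multiplicative factor $f(j) \leq e^{O(J_0)}$ is absorbed into $c_5 = c_5(f)$.

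The main obstacle is thus the uniform conditional bound $\PP(u \in \mc{I}^+ \mid \til{\phi}_{N,u} = s) \leq C/\sqrt{\log N}$ for $s$ near $h$. The trivial $\PP(\cdot \mid \cdot) \leq 1$ only gives $O(1)$, too weak by $\sqrt{\log N}$. The natural approach is to apply the Gibbs-Markov property at the vertex $u$ (Theorem~\ref{Gibbs-Markov}) to decouple $\til{\phi}_{N,u}$ from the field on $V_N \setminus \{u\}$, writing the latter as a GFF on $V_N \setminus \{u\}$ plus the harmonic shift $s \cdot G_N(u,\cdot)/G_N(u,u)$, and then to run an exploration martingale \emph{targeted at} $\{u\}$ and \emph{sourced at} $V_{\alpha N}$, in the spirit of Section~\ref{sec:martingale}. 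The event $\{u \in \mc{I}^+\}$ would then be controlled by the hitting probability of this martingale, and the factor of $1/\sqrt{\log N}$ is expected to emerge from the two-dimensional harmonic measure of $u$ from $V_{\alpha N}$, which at scale $N$ is of order $1/\log N$, combined with a square-root from the Gaussian quadratic-variation estimate analogous to Lemma~\ref{QV by harmonic measure L2}. This is the most delicate step and is where the 2D analysis genuinely differs from the $d \geq 3$ case treated earlier.
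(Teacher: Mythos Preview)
Your overall strategy---linearity of expectation, a Gaussian tail bound on $\til\phi_{N,u}$, and a connection-probability estimate for $\{u\in\mc I^+\}$ via an exploration martingale---is exactly the paper's. However, there are two gaps, one of which is substantive.

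\textbf{The uniform conditional bound is false.} You claim
\[
\PP\bigl(u \in \mc{I}^+ \bigm| \til{\phi}_{N,u} = s\bigr) \leq \frac{C}{\sqrt{\log N}}\quad\text{uniformly over }u\in V_{\beta' N}.
\]
This cannot hold: for $u$ at $\ell_\infty$-distance $O(1)$ from $\partial V_{\alpha N}$, the conditional connection probability is bounded away from $0$ (a single bridge has to stay above $h$). The correct bound depends on the distance $k=|u|_\infty-\alpha N$. The paper handles this by a spatial split: for $k<k^*:=(\beta-\alpha)N/\sqrt{\log N}$ it uses the trivial bound $1$ together with the counting estimate $|\{u:k<k^*\}|=O(N^2/\sqrt{\log N})$; for $k\geq k^*$ it proves (Lemma~\ref{2D connection probability bound})
\[
\PP\bigl(D_{N,h}(V_{\alpha N},u)<\infty\bigm|\til\phi_{N,u}=b_j\bigr)\leq\frac{c\,2^j}{\sqrt{\log N}}\sqrt{\log N-\log k},
\]
and then integrates $\sum_{k=k^*}^{(\beta-\alpha)N}\sqrt{\log N-\log k}=O(N)$. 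Note also the factor $2^j$: the conditional probability is not uniform in $s$ either. Combining this with the tail bound $\PP(\til\phi_{N,u}>a_j)\leq e^{-c4^j}$ gives $f(j)\E[|\mc B_j|]\leq c\,f(j)2^j e^{-c'4^j}N^2/\sqrt{\log N}$, and $\sup_j f(j)2^j e^{-c'4^j}<\infty$ since $f(j)=e^{O(j)}$. In particular, no split into tail and bulk $j$-regimes is needed.

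\textbf{The $J_0$ bookkeeping is wrong.} You assert that $f(j)\leq e^{O(J_0)}$ ``is absorbed into $c_5$'', but $J_0=\Theta(\log\log\log N)\to\infty$, so $e^{O(J_0)}=(\log\log N)^{O(1)}$ is not a constant. As just noted, once the $e^{-c4^j}$ factor is kept throughout, this issue disappears.

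Two smaller points. First, $\mc I^+\subseteq V_{\beta N}\setminus V_{\alpha N}$ directly (any path from $V_{\alpha N}$ to a point outside $V_{\beta N}$ must cross $\partial V_{\beta N}$, so the metric ball stopped at $D_{N,h}(V_{\alpha N},\partial V_{\beta N})$ stays in $V_{\beta N}$); no $\beta'$ or $\epsilon$-loss is needed. Second, for the connection-probability estimate the paper runs the exploration martingale with \emph{source} $\{u\}$ and \emph{target} a box $A_{N,u}$ of radius $\Theta(N)$ around $u$, not the other way around as you propose. This is the natural direction because one is conditioning on $\til\phi_{N,u}$; it mirrors the $d\geq 3$ argument (source at the origin), and the relevant quadratic-variation lower bound becomes the harmonic-measure increment estimate \eqref{2D harmonic measure lower bound}, which is where the factor $(\log N-\log k)^{-1}$ (and hence the $\sqrt{\log N-\log k}$ above) originates.
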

		We are now ready to give a lower bound on $\langle M_{\mu N}\rangle_{D_{N, h}(V_{\alpha N}, \partial V_{\beta N})}$. By definition of $\mathcal W_j$, we have
		\[
		\mathcal W_j \geq 2^{-j} (\log N)^{-\frac{1}{2}} \sum_{u \in \mc{B}_j} W(u) (\til{\phi}_{N,u} - h).
		\]
		In addition, on the event $\mc{E}_{N,1} \cap \mc{E}_{N,2}$ we have
		\begin{align*}
		\sum_{j = 1}^\infty \sum_{u \in \mc{B}_j}W(u) (\til{\phi}_{N,u} - h) &= \sum_{k = 1}^{D_{N, h}(V_{\alpha N}, \partial V_{\beta N})} M_{\mu N, k}^+ \\
		&\geq \frac{c_2\epsilon D_{N, h}(V_{\alpha N}, \partial V_{\beta N}) }{2}\,.
		\end{align*}
		Letting $c_6 = 6/\pi^2$ so that $c_6 \sum_{j = 0}^\infty (j+1)^{-2} = 1$, we see that
		\begin{equation}\label{eq-lower-bound-W-j}
		\mc{E}_{N,1} \cap \mc{E}_{N,2} \subseteq \bigcup_{j=0}^\infty \left\{\mathcal W_{j} \geq \frac{c_6 c_2\epsilon D_{N, h}(V_{\alpha N}, \partial V_{\beta N})}{2^{j+1}(j+1)^2\sqrt{\log N}}\right\}\,.
		\end{equation}
		Letting $c_7 = 20/(c_1\epsilon)$, we define $\mc{E}_{N,3} = \bigcap_{j = 0}^\infty \mc{E}_{N,3,j}$, where
		\begin{align*}
		\mc{E}_{N,3,j} = \left\{|\mc{B}_j| \leq c_7 \E[|\mc{B}_j|] (1.1)^{j+1} \right\}, \quad j \geq 0. 
		\end{align*}
		By Markov's inequality, we get that 
		\begin{equation}\label{eq-E-3-prob}
		\PP(\mc{E}_{N,3}^c \mid \mc{E}_{N,1}) \leq \frac{\mbb{P}(\mc{E}_{N,3}^c)}{c_1} \leq \frac{\epsilon}{2}\,.
		\end{equation}
		Let $\mc{E}_N^* = \mc{E}_{N,1} \cap \mc{E}_{N,2} \cap \mc{E}_{N,3}$. By Lemma~\ref{Positive part lemma} and \eqref{eq-E-3-prob}, we get that
		\begin{equation}\label{eq-mathcal-E*-prob}
		\PP(\mc{E}_N^* \mid \mc{E}_{N,1}) \geq 1 - \epsilon\,.
		\end{equation}
		We deduce from \eqref{2D QV lower bound}, \eqref{eq-L2-bound-W} and \eqref{eq-lower-bound-W-j} that on $\mc{E}_N^*$
		\[
		\langle M_{\mu N}\rangle_{D_{N, h}(V_{\alpha N}, \partial V_{\beta N})} \geq \frac{c_6^2c_4c_2^2 c_1\epsilon^3}{20} \inf_{j\geq 0}\frac{D_{N, h}(V_{\alpha N}, \partial V_{\beta N})^2}{(4.4)^{j+1}(j+1)^4\E[|\mc{B}_{j}|]\log N}. 
		\]
		Combined with Lemma~\ref{Size of extreme connected component}, this gives that on $\mathcal E^*_N$
		\[
		\langle M_{\mu N}\rangle_{D_{N, h}(V_{\alpha N}, \partial V_{\beta N})} \geq \frac{c_6^2c_4c_2^2 c_1\epsilon^3}{20 c_5}\frac{D_{N, h}(V_{\alpha N}, \partial V_{\beta N})^2}{N^2 \sqrt{\log N}}.
		\]
		Combining with \eqref{eq-mathcal-E*-prob}, we have completed the verification of \eqref{eq-remaining-main-task} as promised.
	\end{proof}

	\subsection{Proof of Lemma \ref{Positive part lemma}} \label{Positive part pf}
	
	We first give the main intuition behind the proof of Lemma \ref{Positive part lemma} in the case when $h = 0$. On the event $\mc{E}_{N,1} \cap \mc{E}_{N,2}(\epsilon)^c$, we have $M_{\mu N,s} \leq \epsilon + M_{\mu N,s}^-$ for some $s \leq D_{N,h}(V_{\alpha N},\partial V_{\beta N})$. However, we also have $M_{\mu N,t} \geq M_{\mu N,s}^-$ for all $t \geq s$. Since  $D_{N,h}(V_{\alpha N},\partial V_{\gamma N}) < \infty$ on $\mc{E}_{N,1}$, the martingale must stay above $M_{\mu N,s}^-$ after time $s$ and yet accumulate an order 1 amount of quadratic variation --- this happens with small probability. The case for general $h$ is similar but a bit more complicated. We carry out a detailed proof below. 
	
	In this subsection and the ones that follow, we let $c > 0$ be an arbitrary constant whose value may change each time it appears, and which may depend on $h, \alpha, \beta, \gamma$ but not on $N$. Recall the proces $\pi_{\mu N,t} = \Hm(\partial V_{\mu N}, \mc I_t)$ and the fact that  $M_{\mu N, t}^-$ is  increasing in $t$. From this we obtain that for $0 \leq s < t < D_{N, h}(V_{\alpha N},\partial V_{\mu N})$
	\begin{align*}
	M_{\mu N, t} - M_{\mu N, s} &\geq h[\pi_{\mu N,t} - \pi_{\mu N, s}] -M_{\mu N, s}^+\,,\end{align*}
	with equality if and only if $M_{\mu N,t}^+ = M_{\mu N,s}^+ = 0$. Next, we claim that there exist constants $c,c'$ such that for any $0\leq s < t < D_{N, h}(V_{\alpha N}, \partial V_{\gamma N})$,
	\begin{align*}
	c' [\pi_{\mu N,t} - \pi_{\mu N,s}] &\leq \langle M_{\mu N}\rangle_t - \langle M_{\mu N}\rangle_s, \\
	\langle M_{\mu N}\rangle_t - \langle M_{\mu N}\rangle_s &\leq c[\pi_{\mu N,t} - \pi_{\mu N,s}].
	\end{align*}
	To see this, note that we have
	\begin{align*}
	\langle M_{\mu N}\rangle_t - \langle M_{\mu N}\rangle_s &= \frac{1}{|\partial V_{\mu N}|}\sum_{w \in \partial \mc I_t} \Hm_{N,t}(\partial V_{\mu N}, w)G_{N,s}(w,\partial V_{\mu N}),\\
	\pi_{\mu N,t} - \pi_{\mu N,s} &= \sum_{w \in \partial \mc I_t} \Hm_{N,t}(\partial V_{\mu N}, w)\mbb{P}_w(\tau_{\partial V_N} < \tau_{\mc I_s}),
	\end{align*}
	where $G_{N,s}$ is the Green's function on $V_N \setminus \mc I_s$, $G_{N,s}(w, \partial V_{\mu N}) = \sum_{v \in \partial V_{\mu N}} G(w,v)$, and $\tau_A$ is as usual the hitting time of $A$ by a metric graph Brownian motion. By Lemma~\ref{Useful 2D Green bounds} and the assumption that $\mc I_s \subset V_{\gamma N}$,
	\[
	c' \mbb{P}_w(\tau_{\partial V_{\mu N}} < \tau_{\mc I_s}) \leq \frac{G_{N,s}(w, \partial V_{\mu N})}{|\partial V_{\mu N}|} \leq c \mbb{P}_w(\tau_{\partial V_{\mu N}} < \tau_{\mc I_s}).
	\]
	Similarly, we have by straightforward random walk considerations (namely the invariance principle) 
	\[
	c' \mbb{P}_w(\tau_{\partial V_{\mu N}} < \tau_{\mc I_s}) \leq \mbb{P}_w(\tau_{\partial V_N} < \tau_{\mc I_s}) \leq \mbb{P}_w(\tau_{\partial V_{\mu N}} < \tau_{\mc I_s}).
	\]
	Thus, the upper and lower bounds on the quadratic variation follow. Altogether, this implies
	\begin{equation}\label{eq-M-difference-lower-bound}
	M_{\mu N, t} - M_{\mu N, s} > c h [\langle M_{\mu N}\rangle_t - \langle M_{\mu N}\rangle _s] - M_{\mu N, s}^+\,,
	\end{equation}
	for all $0\leq s < t \leq D_{N,h}(V_{\alpha N}, \partial V_{\gamma N})$ such that $M_{\mu N,s}^+ > 0$.
	For any $x > 0$, let $\eta_x = \inf \{ t \,:\, M_{\mu N,t}^+ \leq x \}$ and define the martingale $\til{M}^x_{\mu N}$ (with respect to $\mc{G}_t = \mc{F}_{\cI_{\eta_x + t}}$) by
	\[
	\til{M}^x_{\mu N,t} = \begin{cases}
	M_{\mu N, \eta_x + t} - M_{\mu N, \eta_x} &\eta_x < \infty,\\
	0 &\eta_x = \infty.
	\end{cases}
	\]
	Let $\Delta = D_{N, h}(V_{\alpha N}, \partial V_{\gamma N}) - \eta_x$ and note that on $\mc{E}_{N,1} \cap \mc{E}_{N,2}(x)^c$ we get from \eqref{eq-M-difference-lower-bound} that
	\[
	\til{M}_{\mu N,t}^x > ch \langle \til{M}_{\mu N}^x\rangle_t - x, \quad 0\leq t < \Delta.
	\]
	Using the lower bound on the quadratic variation by the Harmonic measure that was proven above, we see that the following bound holds almost surely on $\mc{E}_{N,1}\cap \mc{E}_{N,2}(x)^c$,
	\begin{align*}\label{QV change constant lower bound}
	\langle \til{M}_{\mu N}^x\rangle_\Delta &\geq \langle M_{\mu N}\rangle_{D_{N, h}(V_{\alpha N}, \partial V_{\gamma N})} - \langle M_{\mu N}\rangle_{D_{N, h}(V_{\alpha N}, \partial V_{\beta N})}\\
	&\geq c' [\pi_{\mu N, D_{N,h}(V_{\alpha N},\partial V_{\gamma N})} - \pi_{\mu N, D_{N,h}(V_{\alpha N},\partial V_{\beta N})}] \geq c',
	\end{align*}
	where the last inequality follows from a simple adaptation of the proof of \cite[Proposition 4]{DingLi18}.
	
	Write $T_t = \inf\{s \,:\, \langle \tilde M^x_{\mu N} \rangle_s > t\}$ and let $B$ be a standard Brownian motion that satisfies $B_t = \tilde M^x_{\mu N, T_t} - \tilde M^x_{\mu N,0}$ for $t < \langle \tilde M^x_{\mu N} \rangle_\infty$ and is independent of $\mc{F}_{\mc I_{\eta_x}}$. Letting $\tau_{h,x} = \inf\{ t \,:\, B_t \leq ch t - x\}$, it follows from Proposition \ref{Hitting times with drift} and Lemma \ref{Elementary hitting time bound} that for some $c''$
	\[
	\PP(\mc{E}_{N,1} \cap \mc{E}_{N,2}(x)^c) \leq \PP(\tau_{h,x} \geq c') \leq c'' x.
	\]
	Since $\mbb{P}(\mc{E}_{N,1})$ is bounded away from $0$ by Lemma \ref{2D percolation probability}, the conclusion follows.

	\subsection{Proof of Lemma \ref{QV by harmonic measure L2}}\label{QV by L2 pf}
	
	Define $d_k = \langle M_{\mu N} \rangle_k - \langle M_{\mu N} \rangle_{k-1}$ (throughout this section $k$ and $K$ are positive integers), and $\mc{A}_k = \partial \cI_k \setminus \cI_{k-1}$. Let $\cI^{+, K} = \cup_{k=1}^K \partial \cI_k^+$, $\mc{A} = \cup_{k=1}^K \mc{A}_k$, and note that for all $k \geq 0$, $\partial \cI_k^+ \subset \mc{A}_k \cap V$. By Corollary \ref{QV form}, we have
	\begin{align*}
	d_k &= \frac{1}{|\partial V_{\mu N}|}\sum_{v \in \partial V_{\mu N}}\sum_{u \in \mc{A}_k} \Hm_{N,k}(\partial V_{\mu N}, u) G_{N,k-1}(u,v) \\
	&\geq\sum_{u \in \mc{A}_k}(\Hm_{N,k}(\partial V_{\mu N}, u))^2 G_{N,k-1}(u,u)\,,
	\end{align*}
	where the inequality follows from $G_{N, k-1}(u, v) \geq \Hm_{N,k}(v, u) G_{N, k-1}(u, u)$. Consequently,
	\begin{equation}\label{eq-variation-decomposition}
	\langle M_{\mu N}\rangle_{K} \geq \sum_{k = 1}^K \sum_{u \in \mc{A}_k}(\Hm_{N,k}(\partial V_{\mu N}, u))^2 G_{N,k-1}(u,u).
	\end{equation}
	Comparing \eqref{eq-variation-decomposition} to the desired inequality in Lemma~\ref{QV by harmonic measure L2}, we see two differences: (1) the summation in \eqref{eq-variation-decomposition} is over $\mc{A}_k$ as opposed to $\partial \mc{I}_k^+$; (2) there is a term $G_{N,k-1}(u,u)$ in \eqref{eq-variation-decomposition} which we need to bound from below. To address this, we will define a function $\psi : \cI^{+, K} \to \mc{A}$ which, roughly speaking,  allows us to bound $(\Hm_{N,k}(\partial V_{\mu N}, u))^2 G_{N,k-1}(u,u)$ from below by $(\Hm_{N,\tau}(\partial V_{\mu N}, \psi(u)))^2$ (where $\psi(u) \in \mc{A}_{\tau}$). We next carry out the details.
	
	To specify $\psi$, let $D_{\ell_1}$ be $\ell_1$-distance on $\mathbb R^2$ (up to scaling, this is the graph distance on $\til{\mc G}$), and $u \in \partial \cI_k^+ \subseteq \mc{A}_k$  be an active point at time $k$.  If $D_{\ell_1}(u, \cI_{k-1}) \geq 1/2$, then $G_{N,k-1}(u,u) \geq 1/2$ (see \cite{Lupu16}) and we let $\psi(u) = u$. If, on the other hand, $D_{\ell_1}(u, \cI_{k-1}) < 1/2$, there exist at most four points on $\partial \cI_{k-1} \cap B_{\ell_1}(u,1/2)$ (here $B_{\ell_1}(u,r)$ denotes the open ball of radius $r$ centered at $u$ with respect to $D_{\ell_1}$). For every $w \in \partial \cI_{k-1} \cap B_{\ell_1}(u,1/2)$ there is a unique (random) integer $\tau_w  \leq k-1$ such that $w \in \mc{A}_{\tau_w}$. We let $\psi(u)= w$ be the point in $\partial \cI_{k-1} \cap B_{\ell_1}(u,1/2)$ that minimizes $\tau_w$ (that is, the ``oldest'' $w$), breaking ties by distance to $u$ (choosing the $w$ closest to $u$). With this choice, $ \partial \cI_{\tau_w} \cap B_{\ell_1}(u, |u-w|_{\ell_1}) = \emptyset$ so $\Hm_{N,\tau_w}(u,w) \geq 1/4$ and hence 
	\begin{equation}\label{eq-compare-hm}
	\Hm_{N,\tau_w}(\partial V_{\mu N},w) \geq \frac{1}{4} \Hm_{N,k}(\partial V_{\mu N}, u).
	\end{equation}
	Also, $D_{\ell_1}(u, \cI_{\tau_w-1}) \geq 1/2$ so by \eqref{Metric Green Interpolation} we have for $\delta = D_{\ell_1}(w,u) < 1/2$
	\begin{equation}\label{eq-compare-Green}
	G_{N,\tau_w-1}(w,w) = 4\delta(1-\delta) + (1-\delta)^2 G_{N,\tau_w-1}(u,u) \geq \frac{1}{2}.
	\end{equation}
	Finally, for distinct $u,u' \in \cI^{+, K}$, $B_{\ell_1}(u,1/2) \cap B_{\ell_1}(u',1/2) = \emptyset$ so $\psi$ is injective. Recalling \eqref{eq-variation-decomposition}, we get that
	\begin{align*}
	\langle M_{\mu N}\rangle_{K} &\geq  \sum_{k = 1}^K \sum_{u \in \partial \mc{I}_k^+}(\Hm_{N, \tau_{\psi(u)}}(\partial V_{\mu N}, \psi(u)))^2 G_{N,\tau_{\psi(u)}-1}(\psi(u), \psi(u)) \\
	&\geq \frac{1}{32}\sum_{k = 1}^K \sum_{u \in \partial \mc{I}_k^+}\Hm_{N,k}(\partial V_{\mu N}, u)^2\,,
	\end{align*}
	where the factor of $\frac{1}{32}$ comes from $(\frac14)^2$ (which accounts for the ratio on the square of harmonic measures; see \eqref{eq-compare-hm}) and $\frac{1}{2}$ (which accounts for the Green function term; see \eqref{eq-compare-Green}).

	\subsection{Proof of Lemma \ref{Size of extreme connected component}}\label{Extreme component pf}

	Let $A_N = V_{\beta N} \setminus V_{\alpha N}$. We will bound $\E[|\mc{B}_j|]$ by bounding the probability that each vertex $v \in A_N$ belongs to $\mc{B}_j$. Note that 
	\[
	\mc{I}^+ \subseteq \{v \in A_N,\, D_{N, h}(V_{\alpha N}, v) < \infty\}.
	\]
	By Theorem~\ref{Gibbs-Markov}, $\PP(D_{N, h, \beta}(V_{\alpha N},v) < \infty \mid \til{\phi}_{N,v})$ is increasing in $\til{\phi}_{N,v}$. Therefore letting $a_j = h + 2^{j-1}\sqrt{\log N}\one_{j > 0}$ and $b_j = h + 2^j \sqrt{\log N}$ we have for $j\geq 0$ and $v \in A_N$
	\begin{equation}\label{Extreme value percolation decomposition}
	\PP(v \in \mc{B}_j) \leq \PP(\til{\phi}_{N,v} > a_j) \PP(D_{N, h}(V_{\alpha N},v) < \infty \mid \til{\phi}_{N,v} = b_j).
	\end{equation}
	Since (see, e.g., \cite[Theorem 4.4.4, Proposition 4.6.2]{LawlerLimic10})
	\begin{equation}\label{2D variance asymptotic}
	\var[\til{\phi}_{N,v}] = G_N(v,v) =  \frac{2}{\pi} \log N + O(1),
	\end{equation} 
	we see that there exists a constant $c = c(h,\beta)> 0$ such that for all $j \geq 0$ and $v \in A_N$
	\begin{equation}\label{Extreme value probability bound}
	\PP(\til{\phi}_{N,v} > a_j) \leq e^{-c4^j}.
	\end{equation}
	We will bound the second term of \eqref{Extreme value percolation decomposition} in terms of $k$ for $v\in \partial V_{\alpha N + k}$. We state the result here and defer the proof to the end of this section.
	\begin{lemma}\label{2D connection probability bound}
		Let $k^* = (\beta - \alpha) N /\sqrt{\log N}$. There exists a positive constant $c = c(h,\beta, \alpha)$ such that for all $j \geq 0$, $ k \geq k^*$ and $v \in \partial V_{\alpha N + k}$,
		\[
		\mbb{P}(D_{N, h}(V_{\alpha N},v) < \infty \mid \til{\phi}_{N,v} = b_j) \leq \frac{c 2^j}{\sqrt{\log N}} \sqrt{\log N - \log k}.
		\]
	\end{lemma}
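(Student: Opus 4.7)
The plan is to adapt the exploration-martingale framework from Section~\ref{sec:martingale} with source $\cI_0 = V_{\alpha N}$ and target $\{v\}$, then extract the conditional probability via Bayes' rule. Set $\cI_t = \{u : D_{N,h}(V_{\alpha N}, u) \leq t\}$, $\tau_v = D_{N,h}(V_{\alpha N}, v)$, and $M_t = \E[\til\phi_{N,v} \mid \mc F_{\cI_t}]$, so on $\{\tau_v < \infty\}$ we have $M_{\tau_v} = \til\phi_{N,v}$ and $M_t$ is constant thereafter. By Corollary~\ref{QV form} and the 2D Green's function asymptotic $G_N(u,w) = \tfrac{2}{\pi}(\log N - \log|u-w|) + O(1)$ --- together with analogous estimates for the Green's function on the annulus $V_N\setminus V_{\alpha N}$, which motivate the threshold $k \geq k^*$ so as to dominate the $O(1)$ boundary corrections --- we have
\[
\sigma_0^2 := \var(M_0) = G_N(v,v) - G_{V_N\setminus V_{\alpha N}}(v,v) \asymp \log N - \log k, \quad \langle M\rangle_\infty \leq T := G_{V_N\setminus V_{\alpha N}}(v,v) \asymp \log k,
\]
with $\sigma_0^2 + T = \var(\til\phi_{N,v}) =: \sigma^2 \asymp \log N$.

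By Bayes, $\PP(\tau_v < \infty \mid \til\phi_{N,v} = b_j) = p(b_j)/f_{\til\phi_{N,v}}(b_j)$, where $p(b_j)$ is the density of $\til\phi_{N,v}$ on $\{\tau_v<\infty\}$ at $b_j$ and the denominator is the Gaussian density $(2\pi\sigma^2)^{-1/2}\exp(-b_j^2/(2\sigma^2))$. Applying Theorem~\ref{Time-change theorem} writes $M_t - M_0 = B_{\langle M\rangle_t}$ for a Brownian motion $B$ independent of $M_0 \sim N(0,\sigma_0^2)$. On $\{\tau_v<\infty\}$ the identity $\til\phi_{N,v} = M_0 + B_{\langle M\rangle_{\tau_v}}$ with $\langle M\rangle_{\tau_v} \leq T$ forces $B$ to \emph{hit} the level $b_j - M_0$ at a random time in $[0,T]$. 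Combining a first-passage-type bound on this hitting event (analogous in spirit to Proposition~\ref{Hitting times with drift}) with integration against the Gaussian law of $M_0$ --- in particular completing the square in the product density and using the sharp Gaussian tail $\bar\Phi(x) \lesssim e^{-x^2/2}/x$ --- should yield
\[
p(b_j) \leq c\cdot 2^j\sqrt{(\log N - \log k)/\log N}\cdot f_{\til\phi_{N,v}}(b_j),
\]
after which the denominator cancels and the lemma follows.

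The main obstacle is producing this density bound with the correct prefactor $2^j\sqrt{(\log N - \log k)/\log N}$. A naive reflection-principle bound gives only $p(b_j) \leq 2f_{\til\phi_{N,v}}(b_j)$, which is vacuous; extracting the genuine improvement requires exploiting that the Brownian motion must take the value $b_j - M_0$ \emph{exactly} at the stopping time $\langle M\rangle_{\tau_v}$, not merely exceed that level somewhere in $[0,T]$. Carefully executing this computation, together with tracking the Green's function asymptotics on the annulus $V_N\setminus V_{\alpha N}$ uniformly in $v$ (particularly near $\partial V_N$ for $v$ close to the outer boundary, which is where the hypothesis $k \geq k^*$ becomes essential), constitutes the main technical content of the argument.
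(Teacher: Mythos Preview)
Your approach has a genuine gap at precisely the step you flag as the ``main obstacle,'' and the gap is structural rather than technical. With source $\cI_0 = V_{\alpha N}$ and target $\{v\}$, on $\{\tau_v < \infty\}$ one has $v \in \cI_{\tau_v}$, hence $\var(\til\phi_{N,v}\mid \mc F_{\cI_{\tau_v}}) = 0$ and therefore $\langle M\rangle_{\tau_v} = T$ \emph{exactly} (not merely $\leq T$). So on that event $\til\phi_{N,v} = M_0 + B_T$, where $M_0$ and $B_T$ are independent centered Gaussians of variances $\sigma_0^2$ and $T$; their sum has the \emph{unconditional} law $N(0,\sigma^2)$ of $\til\phi_{N,v}$, and your density bound collapses to the trivial one. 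The event $\{\tau_v < \infty\} = \{\langle M\rangle_\infty = T\}$ is not measurable with respect to $(M_0, B)$ --- it depends on the full two-dimensional field --- so without an additional path constraint on $B$ there is nothing to exploit. Your suggestion to use ``$B$ hits $b_j - M_0$ exactly at $\langle M\rangle_{\tau_v}$'' does not help, since that time is the deterministic endpoint $T$. The underlying reason your exploration yields no barrier is that $\partial\cI_0 = \partial V_{\alpha N}$ carries field values both above and below $h$, so there is no inequality of the form $M_t \geq h\pi_t$.

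The paper reverses the exploration: it takes $\cI_0 = \{v\}$ (so the conditioning $\til\phi_{N,v} = b_j$ is simply the initial data) and targets $A_{N,v} = \partial(V_{(1-\gamma)N}+v)$. Starting from a singleton with $\til\phi_v > h$ forces every point of $\partial\cI_t$ to satisfy $\til\phi \geq h$, which yields the barrier
\[
M_{N,v,t} - M_{N,v,0} \geq h[\pi_{N,v,t}-\pi_{N,v,0}] - (b_j - h)\pi_{N,v,0},
\]
and after comparing $\pi$ to $\langle M\rangle$ this becomes $B_s > chs - c' 2^j/\sqrt{\log N}$. The connection event $\{D_{N,h}(v,V_{\alpha N})<\infty\}$ forces the exploration to reach radius $\asymp k$, and a harmonic-measure estimate shows this entails $\langle M\rangle \geq c(\log N - \log k)^{-1}$. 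Proposition~\ref{Hitting times with drift} together with Lemma~\ref{Elementary hitting time bound} then produce the factor $2^j\sqrt{(\log N - \log k)/\log N}$ directly. In short, the missing mechanism in your argument is a barrier for the time-changed Brownian motion, and choosing the singleton $\{v\}$ as the source (rather than $V_{\alpha N}$) is exactly what supplies it.
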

	Using Lemma~\ref{2D connection probability bound} and \eqref{Extreme value probability bound} (and the fact that $\E[|\mc{B}_j|] = \sum_{v \in A_N} \mbb{P}(v \in \mc{B}_j)$), we have
	\[
	\E[|\mc{B}_j|] \leq c 2^j e^{-c' 4^j}\left[\frac{N^2}{\sqrt{\log N}} + \frac{N}{\sqrt{\log N}} \sum_{k = k^*}^{(\beta-\alpha)N}\sqrt{\log N - \log k}\right],
	\]
	where we used the fact that there are $O(N^2/\sqrt{\log N})$ lattice points in $V_{\alpha N + k^*} \setminus V_{\alpha N}$. Finally, we note
	\begin{align*}
	\sum_{k = k^*}^{(\beta - \alpha)N} \sqrt{\log N - \log k}
	&= O(1) \int_{k^*}^{(\beta-\alpha)N} \sqrt{\log N - \log x}\, dx \\
	&= O(N) \int_{(\beta-\alpha)^{-1}}^{(\beta-\alpha)^{-1}\log N} \frac{\sqrt{\log u}}{u^2}\, du= O(N)\,.
	\end{align*}
	This completes the proof of Lemma \ref{Size of extreme connected component}.
	\begin{proof}[Proof of Lemma \ref{2D connection probability bound}] 
		As usual, the proof consists of analyzing an exploration martingale. To specify the martingale, we let $A_{N,v} = \partial (V_{(1-\gamma) N} + v)$ be the boundary of a box of radius $(1-\gamma) N$ around $v$. Note that $A_{N,v} \subset V_N$ for all $v \in V_{\beta N}$.  We then take $M_{N,v}$ to be the exploration martingale with source $\cI_0 = \{v\}$ and observable $X_A$ on $\til{E}_{N}^{\geq h}$. We also let $r_{N,v} = k \wedge((1-\gamma) N/2)$ and $W_{N,v} = \partial(V_{r_{N,v}} + v)$, and note that
		\[
		\{D_{N,h}(v, V_{\alpha N}) < \infty \} \subset \{D_{N,h}(v, W_{N,v}) < \infty \}.
		\]
		As before, we let $\Hm_{N,t}(u,w) = \Hm(u,w; \mc{I}_t \cup \partial V_N)$, $\Hm_N(u,\mc I_t) = \mbb{P}_u(\tau_{\mc I_t} < \tau_{\partial V_N})$, and $\pi_{N,v,t} = \Hm(A_{N,v}, \mc I_t)$. We have
		\[
		M_{N,v,t} - M_{N,v,0} \geq h[\pi_{N,v,t} - \pi_{N,v,0}] - 2^j \sqrt{\log N}\pi_{N,v,0},
		\]
		with equality if and only if $\pi_{N,v,t} = \pi_{N,v,\infty}$ (recall we assume $\til\phi_v = 2^j\sqrt{\log N} + h$). In particular, on $\{D_{N,h}(v, W_{N,v}) < \infty \}$ we have strict inequality for $0 \leq t < D_{N,h}(v, W_{N,v})$. Arguing as in the proof of Lemma~\ref{Positive part lemma}, we can show that there exist constants $c_-,c_+ > 0$ such that the following holds for all $0 \leq s \leq t \leq D_{N,h}(v, W_{N,v})$,
		\[
		c_-[\pi_{N,v,t} - \pi_{N,v,s}] \leq \langle M_{N,v}\rangle_t - \langle M_{N,v}\rangle_s \leq c_+[\pi_{N,v,t} - \pi_{N,v,s}].
		\]
		Additionally, it follows from Lemma \ref{Useful 2D Green bounds} and \eqref{2D variance asymptotic} that there exists $c_1$ independent of $k$ such that $\pi_{N,v,0} \leq c_1/\log N$. All together, this shows that the following holds almost surely on $\{D_{N,h}(v, W_{N,v}) < \infty \}$,
		\[
		M_{N,v,t} - M_{N,v,0} \geq c_2h\langle M_{N,v}\rangle_t - \frac{c_1 2^j }{\sqrt{\log N}}, \quad 0 \leq t < D_{N,h}(v, W_{N,v}).
		\]
To simplify notation, we let $m = c_2h$ and $b = c_1 2^j/\sqrt{\log N}$. As usual, we write $T_t = \inf\{s \,:\, \langle M_{N,v}\rangle_s > t\}$ and let $B$ be a standard Brownian motion independent of $\til\phi_v$ such that $B_t = M_{N,v,T_t} - M_{N,v,0}$ for $t < \langle M_{N,v}\rangle_\infty$. Then we can define
		\begin{align*}
		\tau_{h,N} &= \inf\left\{t \,:\, B_t \leq mt - b\right\}, \\
		\pi_{N,v}^- &= \inf_{\cI}\{\Hm_N(A_{N,v}, \mc I)\} - \Hm_N(A_{N,v}, v),
		\end{align*}
		where the infimum is taken over all closed, connected subsets of $\ctG_{N}$ containing $v$ and intersecting $W_{N,v}$. For notational convenience, we write $T= c_- \pi_{N,v}^-$. We have that
		\[
		\mbb{P}(D_{N,h}(v, W_{N,v}) < \infty) \leq \mbb{P}(\tau_{h,N} \geq T).
		\]
		Note that the right-hand side of the inequality is decreasing in $h$, so we assume $h \leq 0$. Applying Proposition \ref{Hitting times with drift} and Lemma \ref{Elementary hitting time bound}, we get that
		\begin{align*}
		\mbb{P}(\tau_{h,N} \geq T) &\leq 2g\left(m\sqrt{T}, 0\right)\left[ \Phi\left(m\sqrt{T}\right) - \Phi\left(m\sqrt{T} - \frac{b}{\sqrt{T}}\right)\right]\\
		&\leq 2[\varphi(m\sqrt{T}) -m\sqrt{T}] \frac{b}{\sqrt{T}}.
		\end{align*}
		To conclude the proof we need the following bound on $\pi_{N,v}^-$
		\begin{equation}\label{2D harmonic measure lower bound}
		\pi_{N,v}^- \geq c (\log N - \log k )^{-1}.
		\end{equation}
		Provided with \eqref{2D harmonic measure lower bound} and noting the trivial bound $T \leq c_-$, we have $-m\sqrt{T} = O(1)$ and $b = O(1/\sqrt{\log N})$ which implies
		\[
		\mbb{P}(\tau_{h,N} \geq T) = O\left(\frac{2^j \sqrt{\log N - \log k }}{\sqrt{\log N}}\right),
		\]
		as claimed.

		It remains to prove \eqref{2D harmonic measure lower bound}. Note that it suffices to prove
		\[
		\pi_{N,v}^- \geq c (\log N - \log r_{N,v} )^{-1}.
		\]
		To this end, let $u$ be a point on $\mathcal I$ such that $r_{N,v}/2 \leq |u-v|_{\ell_\infty} < 1 + r_{N,v}/2$, and let $B_u, B'_u, B''_u$ be  boxes centered at $u$ of side length $r_{N,v}/4$, $r_{N,v}/8$, $r_{N,v}/16$ respectively. By \cite[Proposition 6.4.1]{LawlerLimic10}, we get that there exists $c' = c'(\alpha,\beta,\gamma) > 0$ such that for any $x\in A_{N,v}$, 
		$$\Hm_N(x, B'_u) \geq c'(\log N - \log r_{N,v})^{-1}\,.$$
		It is also obvious that once the random walk arrives at $\partial B'_u$, there is a probability bounded uniformly from below that the random walk range before exiting $B_u$ will contain a contour in $B_u\setminus B''_u$. In this case, the random walk will hit at least one point in $\mathcal I \cap B_u$. Therefore, we get that
		$$\Hm_N(x, \mathcal I\cap B_u) \geq c''(\log N - \log k)^{-1}\,,$$
		where $c''>0$ depends on $c'$. In addition, for any $w\in \mathcal I \cap B_u$, we have 
		$$\mathrm{Hm}(w, \partial V_N; \partial V_N \cup \{v\}) \geq 1/2$$
		(see e.g., \cite[Theorem 4.4.4., Proposition 4.6.2]{LawlerLimic10}). Altogether, this means that for any $x\in A_{N,v}$, we have 
		\begin{align*}
		\Delta \pi(x, \mathcal I) &:= \PP_x(\mbox{ random walk hits } \mathcal I \mbox{ but not } v \mbox{ before  it hits } \partial V_N) \\
		&\geq c(\log N - \log r_{N,v})^{-1}
		\end{align*}
		for a constant $c>0$. Noting that 
		\[
		\Hm_N(A_{N,v},\cI) - \Hm_N(A_{N,v}, v) =  \frac{1}{|A_{N,v}|} \sum_{x\in A_{N,v}} \Delta \pi(x, \mathcal I),
		\]
		this completes the verification of \eqref{2D harmonic measure lower bound}.
	\end{proof}
	
	\small

\end{document}